\newcommand{\nocontentsline}[3]{}
\newcommand{\tocless}[2]{\bgroup\let\addcontentsline=\nocontentsline#1{#2}\egroup}
\def\l@subsection{\@tocline{2}{0pt}{2pc}{6pc}{}} 
\newcommand{\R}{\mathbb{R}}
\newcommand{\N}{\mathbb{N}}
\newcommand{\Z}{\mathbb{Z}}
\newcommand{\T}{\mathbb{S}^1}
\newcommand{\Sbb}{\mathbb{S}}
\newcommand{\Homeo}{\mathsf{Homeo}}
\newcommand{\cO}{\mathcal{O}}
\newcommand{\PL}{\mathsf{PL}}
\newcommand{\Tsf}{\mathsf{T}}
\newcommand{\Hsf}{\mathsf{H}}
\newcommand{\Fix}{\mathsf{Fix}}
\newcommand{\sub}{\mathsf{Sub}}
\newcommand{\id}{\mathsf{id}}
\newcommand{\rist}{\mathsf{RiSt}}
\DeclareMathOperator{\Out}{\mathrm{Out}}
\DeclareMathOperator{\Aut}{\mathrm{Aut}}
\DeclareMathOperator{\MCG}{\mathrm{MCG}}
\newtheorem{thm}{Theorem}[section]
\newtheorem{introthm}{Theorem}
\newtheorem*{thm*}{Theorem}
\newtheorem*{claim}{Claim}
\newtheorem{lem}[thm]{Lemma}
\newtheorem{prop}[thm]{Proposition}
\newtheorem{cor}[thm]{Corollary}
\newtheorem{intro-cor}[introthm]{Corollary}
\theoremstyle{definition}
\newtheorem{dfn}[thm]{Definition}
\theoremstyle{remark}
\newtheorem{rem}[thm]{Remark}
\newtheorem{ques}{Question}
\begin{document}

\title{Groups of piecewise linear homeomorphisms of  flows}

\author{Nicol\'as Matte Bon}
\email{mattebon@math.univ-lyon1.fr}
\address{CNRS \& Institut Camille Jordan (ICJ, UMR CNRS 5208),
Universit\'e de Lyon,
43 blvd.\ du 11 novembre 1918,
69622 Villeurbanne,
France}

\author{Michele Triestino}
\email{michele.triestino@u-bourgogne.fr}
\address{Institut de Math\'ematiques de Bourgogne (IMB, UMR CNRS 5584),
Universit\'e Bourgogne Franche-Comt\'e,
9 av.~Alain Savary,
21000 Dijon,
France}

\classification{
20F60, 20E32, 20F05 (primary), 37B10 (secondary).}
\keywords{Group actions on one-manifolds, orderable groups, simple groups, finitely generated groups, topological dynamics}
\thanks{The second author was partially supported by the project ANR Gromeov (ANR-19-CE40-0007) and the project ANER Agroupes (AAP 2019 R\'egion Bourgogne--Franche--Comt\'e).}

\begin{abstract}
To every dynamical system $(X,\varphi)$ over a totally disconnected compact space, we associate a left-orderable group $T(\varphi)$. It is defined as a group of homeomorphisms of the suspension of $(X,\varphi)$ which preserve every orbit of the suspension flow and act  by dyadic piecewise linear homeomorphisms in the flow direction. We show that if the system is minimal, the group is simple, and if it is a subshift then the group is finitely generated. The proofs of these two statements are short and elementary, providing straightforward examples of  finitely generated simple left-orderable groups. We show that if the system is minimal, every action of the corresponding group on the circle has a fixed point. These constitute the first examples of finitely generated left-orderable  groups with this fixed point property. We show that for every system $(X,\varphi)$, the group $T(\varphi)$ does not have infinite subgroups with Kazhdan's property $(T)$.
 
  In addition, we show that for every minimal subshift, the corresponding group is never finitely presentable. Finally if $(X,\varphi)$ has a dense orbit then the isomorphism type of the group $T(\varphi)$ is a complete invariant of flow equivalence of the pair $\{\varphi, \varphi^{-1}\}$.  
%
\end{abstract}

\maketitle

\section{Introduction}
A group is \emph{left-orderable} if it admits a total order which is invariant under left multiplication. For instance, the group $\Homeo_+(\R)$ of orientation-preserving homeomorphisms of the real line is left-orderable, as well as all of its subgroups. Conversely, if a non-trivial countable group is left-orderable, then it admits a faithful orientation-preserving action on $\R$ without global fixed points. This provides a dynamical point of view on left-orderability. See Deroin, Navas and Rivas \cite{GOD} for a detailed treatment. 

One recurrent difficulty in this dynamical approach  is the lack of compactness of $\R$. As a possible solution to this issue,  Deroin \cite{Deroin} showed that every finitely generated left-orderable group can be realised as a group of homeomorphisms of a compact space $Y$, without global fixed points, which preserves each orbit of a flow $\Phi\colon \R\times Y\to Y$.  The goal of this paper is to explore how this point of view can be conversely used to define new  examples of left-orderable groups. We introduce and study a family of left-orderable groups associated to suspension flows of homeomorphisms of the Cantor set (more generally, of a Stone space). These groups  turn out to satisfy interesting properties in the setting of group actions on one-manifolds:  simplicity and finite generation (Theorems \ref{t-fg} and \ref{t-simple}), and a fixed point property for actions on the circle (Theorem \ref{t-circle}).

 \smallskip

 Throughout the paper, let $X$ be a non-empty Stone space, that is, a totally disconnected compact Hausdorff space, and let $\varphi$ be a self-homeomorphism of $X$. The dynamical system $(X, \varphi)$ will be called a \emph{Stone system}, or a \emph{Cantor system} in the case $X$ is homeomorphic to a Cantor set.  The \emph{suspension} (or \emph{mapping torus})  of  ($X, \varphi)$ is the space $Y^\varphi:=(X\times \R)/\Z$, where the quotient is taken with respect to the diagonal action of  $\Z$ on $X\times \R$ given by $n\cdot (x, t)= (\varphi^n(x), t-n)$. The natural projection of $(x, t)\in X\times \R$ to $Y^\varphi$ is denoted by $[x, t]$.   The suspension comes with a flow $\Phi \colon \R\times Y^\varphi\to Y^\varphi$, given by $\Phi^t([x, s])=[x, s+t]$.

We denote by $\Hsf(\varphi)$  the group of homeomorphisms of $Y^\varphi$, and by $\Hsf_0(\varphi)$ its subgroup  consisting of elements $g\in  \Hsf(\varphi)$ that are isotopic to the identity. This condition forces every element $g\in \Hsf_0(\varphi)$ to preserve every $\Phi$-orbit, since $\Phi$-orbits are precisely the path-components of $Y^\varphi$.\footnote{Conversely, an element preserving every $\Phi$-orbit needs not be isotopic to the identity in general (see \cite{BCE}), but it is if $(X, \varphi)$ is minimal \cite{APP}.  This subtle problem will be irrelevant in this paper, with the exception of the appendix.} Observe that $\Hsf_0(\varphi)$ is usually not closed in $\Hsf(\varphi)$ (with respect to the compact open topology).
A  concrete  characterisation of $\Hsf_0(\varphi)$  is the following (see  Boyle, Carlsen and Eilers   \cite[Theorem 3.1]{BCE}): a homeomorphism $g\in \Hsf(\varphi)$ belongs to $\Hsf_0(\varphi)$ if and only if there exists a continuous function $\tau\colon Y^\varphi\to \R$ such that $g$ is given by
\[g(y)=\Phi^{\tau(y)}(y).\]
The  groups $\Hsf_0(\varphi)$ can be thought as generalisations of the group $\Homeo_+(\T)$ of orientation-preserving homeomorphisms
of the circle (indeed, it reduces to it when $X$ is reduced to a point). For instance, it is a classical result by Schreier and Ulam \cite{SU} that $\Homeo_+(\T)$ is simple, and after a result of Aliste-Prieto and Petite \cite{APP} the group $\Hsf_0(\varphi)$ is also simple as soon as $(X, \varphi)$ is {minimal} (i.e.\ every orbit is dense). On the other hand, unlike $\Homeo_+(\T)$, the group $\Hsf_0(\varphi)$ is often left-orderable (see Proposition \ref{p-lo}). For instance if the system $(X, \varphi)$ has a {dense orbit}, the restriction of the natural action of $\Hsf_0(\varphi)$ to any dense $\Phi$-orbit yields a faithful action on the real line. 

The group $\Homeo_+(\T)$ has a natural finitely generated subgroup which is also simple, namely  Thompson's group $T$ (whose definition is  recalled in Section~\ref{s-group}). Pursuing this analogy, we define a group $\Tsf(\varphi)\le \Hsf_0(\varphi)$.   It is defined as the subgroup of $\Hsf_0(\varphi)$ formed by homeomorphisms that in the direction of the flow are locally defined by dyadic piecewise linear  homeomorphisms, and whose displacement is locally constant in the totally disconnected  direction  (see Definition \ref{d-group}). The goal of this paper is to study properties of the group $\Tsf(\varphi)$.

\subsection{Finite generation and simplicity} Let us begin with a criterion that ensures that the group $\Tsf(\varphi)$ is  finitely generated. Recall that a  \emph{subshift} over a finite alphabet $A$  is a Stone system $(X, \varphi)$ where $X\subset A^\Z$ is a non-empty closed shift-invariant subset, and  $\varphi$ is the restriction of the shift to $X$. (We allow $X$ to admit isolated points, for instance it can be countable). 

\begin{introthm} \label{t-fg}
If the Stone system $(X, \varphi)$ is conjugate to a subshift, the group $\Tsf(\varphi)$ is finitely generated.\end{introthm}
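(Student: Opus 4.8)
The plan is to produce an explicit finite generating set, adapting the standard tree-pair description that proves finite generation of Thompson's group $T$, now fibred over the subshift. Writing points of $Y^\varphi$ as $[x,t]$, the cross-section $\{[x,0]\}\cong X$ is a global transversal to $\Phi$ with constant return time $1$ and first-return map $\varphi$; this is the feature of suspensions that I would exploit throughout. First I would record a normal form: by Definition \ref{d-group}, every $g\in\Tsf(\varphi)$ is determined by a finite partition of $Y^\varphi$ into \emph{standard boxes} of the form $\{[x,t]: x\in C,\ t\in I\}$, where $C\subseteq X$ is a cylinder of the subshift and $I$ a dyadic interval, on each of which $g$ acts by a single affine map in the $t$-direction with dyadic breakpoints, power-of-two slope, and displacement independent of $x$; the images form a second such partition and $g$ is encoded by the matching between the two. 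The preliminary task is to show that any two elements admit a common refinement of their defining partitions, obtained by bisecting dyadic intervals and by extending each cylinder $C$ by one letter (of which there are at most $|A|$ choices). This makes the set of \emph{elementary box moves} a generating set, and reduces Theorem \ref{t-fg} to cutting that infinite set down to a finite one.

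Next I would split the elementary moves into two families and bound each. The \emph{fibrewise} family consists of moves that fix the $X$-coordinate and perform a basic dyadic piecewise linear homeomorphism in the flow direction uniformly over all of $X$; together with the dyadic flow translations $\Phi^{t}$ (the ``rotations''), these span a homomorphic image of Thompson's group $T$ in the flow direction, which is finitely generated, so finitely many of them suffice. The \emph{transition} family consists of the partial-shift elements that use the identification $[x,1]=[\varphi(x),0]$ to carry one clopen piece across the transversal while interpolating by dyadic piecewise linear maps in the flow direction so as to remain a homeomorphism of $Y^\varphi$. Because the alphabet $A$ is finite, there are only $|A|$ cylinders of depth one, and I would take finitely many transition generators handling these. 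The decisive point is a depth-reduction argument: a transition or box move supported on a deep cylinder $[a_1\cdots a_n]$ should be rewritten, by conjugating the depth-one generators by the fibrewise generators and by $\Phi^{t}$, in terms of moves on depth-one cylinders --- in direct analogy with the Thompson relations $x_n=x_0^{-(n-1)}x_1x_0^{\,n-1}$, with the role of $x_0$ played by elements that ``read off'' one letter of the subshift. Finiteness of $A$ is exactly what makes this induction close on a finite set.

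The step I expect to be the main obstacle is the behaviour at the transversal, where the flow direction is glued through $\varphi$ and where the PL structure and the Cantor structure genuinely interact. A standard box straddling $\{[x,0]\}$ is sheared by $\varphi$ rather than being a literal product, so I must check that the normal form and its refinements remain compatible across the gluing, and --- crucially --- that the partial-shift elements really do lie in $\Tsf(\varphi)$ and are generated by the finite set, rather than requiring, as the bare topological full group of the subshift might, infinitely many generators. The resolution I would aim for is that the extra room afforded by the flow direction lets one absorb each cylinder transition into a composition of fibrewise PL moves and a bounded number of flow translations: concretely, given an arbitrary $g$, I would multiply it by finitely many fibrewise and transition generators to push all breakpoints off the transversal onto a fixed depth-bounded standard partition, after which the residual element lies in the finitely generated fibrewise part. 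Verifying this absorption uniformly in the depth of the cylinders, and keeping the bookkeeping finite, is the technical heart; granting it, one obtains $\Tsf(\varphi)=\langle \text{fibrewise generators}\cup\text{transition generators}\rangle$ with a finite generating set, which is Theorem \ref{t-fg}.
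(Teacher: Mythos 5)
Your overall architecture parallels the paper's actual proof quite closely: the generating set in Proposition \ref{p-fin-gen} consists of one ``global'' copy of Thompson's group $F$, namely $F_{X,I_0}$ with $I_0\ni 0$ straddling the transversal (your fibrewise family), together with one copy $F_{C_i,I_1}$ per piece of a generating clopen partition, with $I_1\ni 1$ (your transition family), and finiteness of the alphabet is what closes the induction. However, the two steps you yourself flag as obstacles are genuine gaps, and the paper resolves them by mechanisms different from the ones you sketch. First, your claim that the ``elementary box moves'' form a generating set is exactly Proposition \ref{p-inf-gen}, and it does not follow formally from the local normal form: knowing that $g$ is given by $\id\times f$ on each box of a finite partition does not exhibit $g$ as a \emph{product} of homeomorphisms each supported in a single box (the boxes and their images overlap, and the gluing at the transversal intervenes). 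The paper needs a separate argument here: either density of $\Tsf(\varphi)$ in $\Hsf_0(\varphi)$ together with connectedness of the latter (Appendix \ref{s-polish}), or, in the minimal case, the first-return decomposition of Lemma \ref{l-stab-prod} combined with Lemma \ref{l-stab-generate}. Your proposal takes this step for granted.

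Second, the depth reduction is not carried out by an analogue of the Thompson relation $x_n=x_0^{-(n-1)}x_1x_0^{\,n-1}$, and I do not see how such a conjugation scheme would terminate here. What the paper does instead is Boolean: a cylinder of depth $n$ is a finite Boolean combination of $\varphi$-translates of the depth-one cylinders $C_i$; the translate $\varphi^n(C_i)$ comes for free from the domain relation \eqref{e-chart} (which gives $F_{\varphi^n(C_i),J}=F_{C_i,J+n}$), intersections of cylinders are produced by taking commutators (the Intersection Lemma, Lemma \ref{l-intersection}), unions by the Fragmentation Lemma (Lemma \ref{l-cover}), and Lemma \ref{l-moving-intervals} brings the intervals back to length less than $1$. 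This is precisely where the hypothesis that the partition is generating --- equivalently, that $(X,\varphi)$ is a subshift --- enters. Finally, your concluding absorption step conflates two properties: an element whose breakpoints have been pushed off the transversal is supported in a domain $U_{X\times J}$ with $|J|<1$, but it is not thereby ``fibrewise'' (uniform over $X$); it still decomposes over arbitrarily deep cylinders and requires the depth reduction above. So the proposal identifies the right generators but leaves the technical heart --- Proposition \ref{p-inf-gen} and the Intersection/Fragmentation machinery --- unproved.
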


Note that the converse is also true (see Proposition \ref{p-not-fin-gen}).

It has been until recently an open question, attributed to Rhemtulla, whether there exists a finitely generated left-orderable group which is simple (this question first appeared in [Notices AMS \textbf{28} (June 1982), p.\ 327]). The first examples of such groups have been constructed by Hyde and Lodha \cite{HL}. They define a family of groups $G_\rho$ parametrised by a sequence  $\rho\in  \{a^{\pm 1}, b^{\pm 1}\}^\Z$ on a four-letter alphabet, required to be quasi-periodic and to verify a palindromicity condition. Each $G_\rho$ is defined as the group generated by six explicit piecewise linear  homeomorphisms of the real line with infinitely many pieces, which depend in a natural way on the sequence $\rho$. In relation to the present setting,  if we let $(X, \varphi)$ be the subshift given by the orbit-closure of the sequence $\rho \in \{a^{\pm 1}, b^{\pm 1}\}^\Z$, then the group $G_\rho$ can be shown to be isomorphic to a (strict) subgroup of $\Tsf(\varphi)$ (this is close to the construction of Deroin \cite{Deroin}). 

 Note that several natural classes of left-orderable groups are actually \emph{locally indicable}   (that is, every finitely generated non-trivial subgroup surjects to $\Z$). Historically the first  example which is not locally indicable (due to Thurston \cite{Thurston}, rediscovered by Bergman \cite{Bergman}) is the central extension $\widetilde\Gamma_{2,3,7}$ in $\widetilde{\mathsf{PSL}}(2,\R)$ of the $(2,3,7)$-triangle group  in $\mathsf{PSL}(2,\R)$, which is finitely generated and perfect. Another example with the same property is the central extension $\widetilde T$ of Thompson's group $T$  (defined in Section~\ref{s-LO}), see Navas \cite[Remark 2.2.51]{Nav-book}.

Many examples of simple groups arise as groups of homeomorphisms $G\le \Homeo_+(Z)$ of some (Hausdorff) space $Z$ whose action  is \emph{micro-supported}, meaning that  every non-empty open subset  $U\subset Z$ has a non-trivial \emph{rigid stabiliser} $\rist(U)$, defined as the subgroup of elements with support in $U$. This property helps in proving simplicity using a  method \emph{à la Higman--Epstein} \cite{Epstein, Higman} (see Section \ref{ss-simple}). The starting observation is a ``Double Commutator Lemma'' (see Lemma \ref{l-epstein}), stating that every non-trivial normal subgroup of a micro-supported group $G$ must contain the derived subgroup $\rist_G(U)'$ of the rigid stabiliser  of some non-empty open set. As a consequence, to prove simplicity it is enough to show that $G$ is generated by commutators of elements of sufficiently ``small'' support (cf.\ Proposition \ref{p-simplicity}).  However it is easy to see that a \emph{finitely generated} simple group cannot admit a micro-supported action on any \emph{non-compact}, locally compact space (see Remark \ref{r-micro-supp-R}). This is an important technical difficulty when establishing simplicity of finitely generated simple subgroups of $\Homeo_+(\R)$.  In particular the proof of simplicity of the   groups $G_\rho$ given by Hyde and Lodha  \cite{HL}  goes through a quite involved combinatorial  analysis of the action of the groups $G_\rho$ on the real line $\R$.

The groups $\Tsf(\varphi)$ provide a more conceptual way to obtain such examples, which makes this difficulty disappear.  In fact,  the action of the group $\Tsf(\varphi)$ on the \emph{compact} suspension space $Y^\varphi$ is micro-supported. This yields a very short proof of  the following result.

\begin{introthm}\label{t-simple}
If the Stone system $(X, \varphi)$ is minimal, then $\Tsf(\varphi)$ is simple. 
\end{introthm}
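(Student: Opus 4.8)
The plan is to prove simplicity of $\Tsf(\varphi)$ by exploiting the micro-supported action on the compact space $Y^\varphi$ together with the Double Commutator Lemma (Lemma \ref{l-epstein}) alluded to in the introduction. Concretely, the strategy of the method \emph{à la Higman--Epstein} is the following: given any non-trivial normal subgroup $N \trianglelefteq \Tsf(\varphi)$, the Double Commutator Lemma produces a non-empty open set $U \subseteq Y^\varphi$ with $\rist_{\Tsf(\varphi)}(U)' \subseteq N$. The goal is then to show that this forces $N = \Tsf(\varphi)$. To do this, I would reduce to working with open sets of a convenient form, namely ``flow boxes'' $[C \times I]$ where $C \subseteq X$ is a clopen set and $I \subset \R$ is a short interval, which form a basis for the topology of $Y^\varphi$ and over which elements of $\rist(U)$ can be described explicitly in terms of dyadic $\PL$ homeomorphisms supported in the flow direction.

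The key steps, in order, would be as follows. First, I would record the structure of rigid stabilisers of flow boxes: $\rist_{\Tsf(\varphi)}([C \times I])$ should contain (a copy of) a group of the same type as $\Tsf$ acting in the flow direction, in particular it should contain non-trivial commutators and be non-abelian, so that its derived subgroup is itself non-trivial and rich. Second, and this is where minimality enters crucially, I would use minimality of $(X,\varphi)$ to show a \emph{transitivity} property: for any two flow boxes there is an element of $\Tsf(\varphi)$ conjugating (or mapping) one inside the other. Minimality guarantees that every orbit is dense, so the flow direction lets one ``spread'' any local perturbation over the whole suspension; combined with the dyadic $\PL$ flexibility in the flow coordinate, this should allow any element supported in one small flow box to be conjugated into another. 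Third, using this transitivity together with the containment $\rist_{\Tsf(\varphi)}(U)' \subseteq N$ and the fact that $N$ is normal, I would show that $N$ contains the derived subgroups of rigid stabilisers of a collection of flow boxes that generate the whole group, and then conclude $N = \Tsf(\varphi)$ by a generation argument (as in Proposition \ref{p-simplicity}), after checking that $\Tsf(\varphi)$ is generated by such commutators.

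I expect the main obstacle to be the transitivity/spreading step where minimality is used. The delicate point is that elements of $\Tsf(\varphi)$ must simultaneously respect two constraints: the displacement is locally constant in the Stone direction, and in the flow direction it must be a dyadic $\PL$ homeomorphism. Producing an element of $\Tsf(\varphi)$ that carries one flow box into another therefore requires a careful synchronisation of these two features: one must use the minimality of $\varphi$ to find returns of the $\varphi$-orbit of the relevant clopen set, and then interpolate in the flow direction by dyadic $\PL$ maps in a way that is globally consistent across the suspension. A secondary technical point is verifying that the elements one constructs genuinely lie in $\Tsf(\varphi)$ (and hence in $\Hsf_0(\varphi)$), i.e.\ that the resulting displacement function is continuous and the piecewise linear structure well-defined; this should follow from working with clopen sets and locally constant transition data, but it needs to be checked. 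Once transitivity is in hand, the conclusion via the generation argument should be short, matching the paper's claim that this yields a very short proof.
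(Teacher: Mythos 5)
Your overall architecture --- Double Commutator Lemma, then minimality to spread a rigid stabiliser around, then a generation argument --- is exactly the paper's (it is packaged there as Proposition \ref{p-simplicity}, applied via Lemma \ref{l-minimal} and Proposition \ref{p-fragmentable}). However, the step you single out as the crux, namely a transitivity statement asserting that for \emph{any} two flow boxes there is an element of $\Tsf(\varphi)$ mapping one inside the other, is both left unproved in your plan and stronger than what the argument requires; it is also far from obviously true (every element of $\Tsf(\varphi)$ preserves each $\Phi$-orbit and has bounded displacement along the flow, so mapping a large flow box such as $U_{X\times I}$ into a small one $U_{D\times J}$ would require a genuinely delicate construction, and nothing of the sort appears in the paper). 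What is actually needed, and what minimality gives essentially for free, is much weaker: $\Tsf(\varphi)$ contains all dyadic time-translations of the flow, so every $\Tsf(\varphi)$-orbit is dense in the corresponding $\Phi$-orbit, hence the action on $Y^\varphi$ is minimal whenever $(X,\varphi)$ is. Consequently, for the open set $V$ produced by Lemma \ref{l-epstein}, the translates $g(V)$, $g\in\Tsf(\varphi)$, already form an open cover of $Y^\varphi$, and normality gives $\rist(g(V))'=g\,\rist(V)'\,g^{-1}\le N$ for every $g$. No flow box ever needs to be mapped into $V$; one only needs to know that $\Tsf(\varphi)$ is generated by the subgroups $\rist(U)'$ as $U$ ranges over \emph{any} open cover.

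That last point is where the real work lies, and your proposal defers it to a parenthetical (``after checking that $\Tsf(\varphi)$ is generated by such commutators''). Concretely one must show: (a) $\Tsf(\varphi)$ is generated by the derived subgroups $F'_{C,J}$ attached to dyadic charts (Proposition \ref{p-inf-gen}; in the minimal case this follows from the first-return decomposition of germ-stabilisers, Lemmas \ref{l-stab-prod} and \ref{l-stab-generate}, cf.\ Remark \ref{r-p-inf-gen-minimal}); and (b) each $F'_{C,J}$ can be fragmented along any cover of $C\times J$ by smaller boxes $C_i\times J_i$, which rests on the Intersection and Fragmentation Lemmas (Lemmas \ref{l-intersection} and \ref{l-cover}) and ultimately on the corresponding properties of Thompson's group $F$. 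Together (a) and (b) give fragmentability (Proposition \ref{p-fragmentable}), and the proof then closes exactly as you intend. So the plan is salvageable, but as written it locates the difficulty in the wrong place: the Stone-direction/flow-direction ``synchronisation'' you worry about is not needed, while the generation and fragmentation statements you treat as routine are the substantive content of the proof.
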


Note that minimality is also necessary: if $(X,\varphi)$ is not minimal then $\Tsf(\varphi)$ admits a non-trivial proper quotient (see Corollary \ref{c-not-simple}).

Combining with Theorem \ref{t-fg}, we have:
\begin{intro-cor}
For any infinite minimal subshift $(X, \varphi)$, the group $\Tsf(\varphi)$ is simple, finitely generated, and left-orderable.
\end{intro-cor} 

\begin{rem}
Theorems \ref{t-fg} and \ref{t-simple}  are analogous to  the results of Matui \cite{Mat-simple} on the derived subgroup of the \emph{topological full group} $[[\varphi]]$ of a Cantor minimal system $(X, \varphi)$, and were largely inspired by these (with the difference that here the group $\Tsf(\varphi)$ always coincides with its derived subgroup). Recall that the topological full group $[[\varphi]]$ is  the group of all homeomorphisms of~$X$ that locally coincide with a power of $\varphi$. Note that the group $[[\varphi]]$ is very far from being torsion-free (it contains infinite locally finite subgroups), thus it cannot be left-orderable.
 Another important difference is that  $\Tsf(\varphi)$ always contains non-abelian free subgroups and is therefore non-amenable (see Corollary~\ref{c-free-subgroups}), whereas Juschenko and Monod \cite{JM} proved that the topological full group~$[[\varphi]]$ is amenable. Note that by a result of Witte Morris \cite{WM} (see also Deroin \cite{Deroin}), no amenable left-orderable group can be simple, as such a group is locally indicable. Finally, Grigorchuk and Medynets proved that the topological full group $[[\varphi]]$ of a Cantor minimal system $(X, \varphi)$ is \emph{locally embeddable into finite groups (LEF)} \cite{GM}, but in contrast the group $\Tsf(\varphi)$ is not, as it contains subgroups isomorphic to Thompson's group $F$ (see Section \ref{s-subgroups}; $F$ is not LEF because it is finitely presented and not residually finite, therefore \cite[Proposition 7.3.8]{CC} applies).
\end{rem}

\subsection{Actions of $\Tsf(\varphi)$ on the circle and on the real line}
Our next result analyzes actions of the group $\Tsf(\varphi)$  on the  circle. Clearly every group of homeomorphisms of the real line can also be seen as acting on the circle $\Sbb^1\simeq\R\cup\{\infty\}$ by fixing the point at infinity. The next result says that conversely,  actions  of the group $\Tsf(\varphi)$  on the circle  always reduce to actions on the real line.
 \begin{introthm} \label{t-circle}
	If the Stone system $(X, \varphi)$ is infinite and minimal, every continuous action of the group $\Tsf(\varphi)$ on the circle has a fixed point.
\end{introthm}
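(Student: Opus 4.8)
The plan is to exploit the simplicity of $\Tsf(\varphi)$ together with the micro-supported structure of its action on $Y^\varphi$. First I would record that, since $\Tsf(\varphi)$ is simple and non-abelian, it is perfect, so any homomorphism $\alpha\colon \Tsf(\varphi)\to\Homeo(\T)$ lands in the index-two subgroup $\Homeo_+(\T)$ (the orientation character takes values in the abelian group $\Z/2\Z$). Next, if $\alpha$ has a finite orbit $O\subset\T$, the induced cyclic order on $O$ yields a homomorphism $\Tsf(\varphi)\to\Z/|O|\Z$, which is trivial by perfectness; hence $O$ is fixed pointwise and we are done. So it suffices to rule out actions with no finite orbit. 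For such an action, the standard structure theory of circle actions provides a unique minimal set and a monotone degree-one semiconjugacy onto a \emph{minimal} action $\bar\alpha$; by simplicity $\bar\alpha$ is either trivial (impossible, since a minimal action on $\T$ is infinite) or faithful. Thus the whole statement reduces to showing that $\Tsf(\varphi)$ admits no faithful minimal action on $\T$.

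I would then eliminate the measure-preserving case: a minimal action preserving a probability measure is conjugate to an action by rotations, giving a homomorphism $\Tsf(\varphi)\to\R/\Z$, again trivial by perfectness and contradicting minimality. Hence the hypothetical minimal action has no invariant measure; in particular it is not conjugate to rotations, and I may invoke Ghys's theorem that the centraliser in $\Homeo_+(\T)$ of such a minimal subgroup is a finite cyclic group.

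The heart of the argument is then to produce a proper, closed, non-empty, $\Tsf(\varphi)$-invariant subset of $\T$, contradicting minimality. Here I would use commuting rigid stabilisers: for two disjoint non-empty clopen boxes $U_1,U_2\subset Y^\varphi$ the subgroups $\rist(U_1)$ and $\rist(U_2)$ commute, and each is non-abelian (it contains a non-abelian free subgroup by Corollary~\ref{c-free-subgroups}). Restricting $\alpha$ to $\rist(U_2)$, its action on $\T$ cannot be minimal: if it were, it would not be conjugate to rotations (being non-abelian), so by Ghys's theorem its centraliser would be finite, contradicting the fact that the commuting subgroup $\rist(U_1)$ is infinite. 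Thus every rigid stabiliser of a proper clopen set acts non-minimally, i.e.\ admits a proper closed invariant subset. To each non-empty clopen $V\subsetneq Y^\varphi$ I would attach the canonical closed invariant set $\Lambda(V)\subsetneq\T$ built from the minimal sets of $\rist(V)$; since $g\,\rist(V)\,g^{-1}=\rist(gV)$ and $\Lambda(\cdot)$ is canonically attached to the subgroup, one gets equivariance $g\,\Lambda(V)=\Lambda(gV)$, and the commutation of rigid stabilisers over disjoint boxes should force these sets to agree, yielding a single proper closed set $\Lambda$ with $g\Lambda=\Lambda$ for all $g$, the desired contradiction.

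The main obstacle is this last step: controlling the dynamics on $\T$ of the commuting subgroups $\rist(V)$, whose $\T$-action is a priori unrelated to their geometric action on $Y^\varphi$. Concretely, one must ensure that the canonical invariant set $\Lambda(V)$ is well-defined, proper and non-empty (for instance that $\rist(V)$ has a unique minimal set, or that the union of its minimal sets is proper), and that the disjoint-box comparison genuinely identifies these sets. I expect this to require the structure theory of minimal actions without invariant measure (proximality) applied to the subgroups $\rist(V)$; alternatively, one could replace this hands-on step by a rigidity theorem for micro-supported actions, deducing directly that any minimal action on $\T$ would have to reproduce the totally disconnected transverse structure of $Y^\varphi$, which is impossible.
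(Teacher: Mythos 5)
Your reductions are sound (finite orbit case via perfectness, elimination of an invariant measure, reduction to a faithful minimal action), and the use of Ghys's centraliser theorem to show that $\rist(V)$ acts non-minimally on $\T$ for every proper clopen box $V$ is a correct and nice observation. (A small correction there: Corollary~\ref{c-free-subgroups} concerns $\Tsf(\varphi)$ itself, not rigid stabilisers; in fact $\rist(U_{C\times J})$ contains \emph{no} free subgroups, since by Lemma~\ref{l-compactness-prod} it decomposes into commuting copies of $F'$, which is free-subgroup-free by Brin--Squier. You only need non-abelianness and infiniteness, which hold, so this does not break that step.) The genuine gap is the last step, and it is not a technicality: commutation of $\rist(V_1)$ and $\rist(V_2)$ for disjoint boxes only gives that each group \emph{preserves} the canonical set $\Lambda$ of the other; it does not force $\Lambda(V_1)=\Lambda(V_2)$, and in the ``expected'' model of such an action (pulling back the action on $Y^\varphi$ along a map $\T\to Y^\varphi$) the sets $\Fix(\rist(V))$ genuinely vary with $V$. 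Nor does mutual invariance yield a globally invariant set: the subgroups $\rist(V)$ together with all $\rist(W)$ for $W$ disjoint from $V$ do \emph{not} generate $\Tsf(\varphi)$, because boxes straddling the boundary of $V$ are missed — what they generate is essentially a germ-stabiliser of $\partial V$. So no proper closed $\Tsf(\varphi)$-invariant subset of $\T$ is produced, and the contradiction with minimality is not reached.

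The paper closes exactly this gap by a different mechanism. It works with germ-stabilisers of points $\Tsf(\varphi)^0_y$ rather than rigid stabilisers of boxes: the First Return Decomposition (Lemma~\ref{l-stab-prod}) shows these are perfect with no free subgroups, so by Margulis they preserve a measure on $\T$ and hence fix a point (Lemma~\ref{l-stab-fix}); crucially, any two germ-stabilisers generate the whole group (Lemma~\ref{l-stab-generate}), which gives \emph{uniqueness} of the point $y=q(\vartheta)$ with $\Tsf(\varphi)^0_y\le \Tsf(\varphi)_\vartheta$; Chabauty semicontinuity of (germ-)stabilisers then makes $q\colon\T\to Y^\varphi$ continuous and equivariant. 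The contradiction is not an invariant subset of $\T$ but a topological obstruction on the target: by minimality $q$ would be onto $Y^\varphi$, which is impossible since $Y^\varphi$ is not path-connected. If you want to salvage your route, you would need an analogue of Lemma~\ref{l-stab-generate} identifying which large subgroups fix which points, plus a continuity argument — at which point you are essentially reconstructing the map $q$.
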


 In combination with Theorem \ref{t-fg}, this provides the first examples of non-trivial, finitely generated subgroups of $\Homeo_+(\Sbb^1)$ with this property. 
 
\begin{rem}
	We remark here, and this will be silently used throughout the text, that if $G$ is a perfect group, every one-dimensional continuous action of $G$ preserves the orientation. 
\end{rem}

Theorem \ref{t-circle} implies a related property for actions of the group $\Tsf(\varphi)$ on the real line. Recall (see Deroin, Navas and Rivas \cite[\S 3.5]{GOD}) that every action of a finitely generated group $G$ on the real line by orientation-preserving homeomorphisms, without global fixed points, is exactly of one of the following three types. 
\begin{enumerate}[(I)]
\item \label{i-typeI} The action preserves a non-zero Radon measure on  $\R$.
\item \label{i-typeII} The action is semi-conjugate to a minimal action on $\R$ which commutes with the action of $\Z$ on $\R$ by translations, and passes to the quotient to a \emph{proximal}\footnote{In this context, a minimal $G$-action on $X=\R/\Z$ or $\R$ is \emph{proximal} if for all compact intervals $I, J\subset X$ there exists $g\in G$ such that $g(I)\subset J$.} action on $\Sbb^1=\R/\Z$.
\item \label{i-typeIII} The action is semi-conjugate to a minimal and proximal action on $\R$.
\end{enumerate}
(The notion of semi-conjugacy is recalled in Section \ref{s-LO}.)
While actions of type \eqref{i-typeIII} are in some sense the ``generic'' ones, all finitely generated, left-orderable groups known so far were also known to admit actions of type  \eqref{i-typeI} or \eqref{i-typeII}. This lead Deroin, Navas and Rivas to ask whether there exist finitely generated, left-orderable groups whose actions on the real line, without global fixed points, are {all} of type \eqref{i-typeIII} \cite[Question 3.5.11]{GOD}. 
A discussion on this question can also be found in Navas' survey \cite[see Question 4]{Nav}.

For a given finitely generated group $G$, it is clear from the above trichotomy that the following statements are equivalent.
\begin{enumerate}[(i)]
	\item Every orientation-preserving action of $G$ on the real line, without global fixed point, is of type \eqref{i-typeIII}.
	\item Every orientation-preserving action of $G$ on the circle, which lifts to the real line, has a fixed point.
\end{enumerate}
Thus Theorem \ref{t-circle} answers the question of Deroin, Navas and Rivas in the affirmative.
\begin{intro-cor} \label{c-monster}
If the Stone system $(X, \varphi)$ is infinite and minimal, every action of the group $\Tsf(\varphi)$ on the real line, without global fixed points, is of type \eqref{i-typeIII}. 
\end{intro-cor}

The proof of Theorem \ref{t-circle} is based on  an analogue of a property known to hold for Thompson's group $T$, namely that every non-trivial action of the group $T$ on the circle is semi-conjugate to the standard one (Theorem~\ref{t-TonS1}). Using arguments from a proof of this fact  by Le Boudec and the first named author \cite{LB-MB-subdyn}, we show that every action of $\Tsf(\varphi)$ on $\Sbb^1$ without global fixed points, must give rise to a continuous surjective map $\Sbb^1\to Y^\varphi$. But here the space $Y^\varphi$ is not path-connected, thus such an action cannot exist. The action of the group $\Tsf(\varphi)$ on  the space $Y^\varphi$ (rather than its action on the real line) therefore  plays a crucial role also here. 

\begin{rem}
	After the first version of this work, James Hyde, Yash Lodha, Andr\'es Navas and Crist\'obal Rivas informed us of the work \cite{HLNR} which  independently establishes the analogue of Corollary~\ref{c-monster} for the groups $G_\rho$ of Hyde and Lodha~\cite{HL}.
\end{rem}

We mention the following related question.

\begin{ques}
	Assume that the Stone system $(X,\varphi)$ is infinite minimal. Is every action of $\Tsf(\varphi)$ on the real line, without global fixed points, semi-conjugate to its action on some infinite $\Phi$-orbit?
\end{ques}

At the moment of writing we are unable to answer this question. Let us point out that a similar statement (previously unnoticed?) holds for the central extension $\widetilde T$ of Thompson's group~$T$ (see Theorem \ref{t:tilde-T}), which embeds into  $\Tsf(\varphi)$ as soon as $\varphi$ has infinite order (see Proposition~\ref{p-nonamenable}).

Let us also mention that an example close in spirit  to Theorem \ref{t-circle}  appears in Calegari \cite{forcing}: let $\widetilde\Gamma_{2,3,7}\subset \widetilde{\mathsf{PSL}}(2, \R)$ be the central extension of the $(2,3,7)$-triangle group in $\mathsf{PSL}(2, \R)$; it is not true that every action of $\widetilde{\Gamma}_{2,3,7}$ on the circle has a global fixed point, but this happens for the restriction to $\widetilde{\Gamma}_{2,3,7}$ of actions of a finitely generated, circularly left-orderable overgroup.

\subsection{Non-existence of subgroups with Kazhdan's property $(T)$}
Recall that a countable group $G$ has Kazhdan's property $(T)$ if every $G$-action on a Hilbert space by affine isometries has a fixed point. 
It is an open question whether an infinite group with Kazhdan's property $(T)$ can act faithfully  on a one-dimensional manifold by homeomorphisms.  Navas \cite{Nav-T} proved that this is not the case if one restricts to actions on the circle  by smooth diffeomorphisms. By results in \cite{LMT, Cor-partial}, this extends  to actions by piecewise smooth diffeomorphisms with finitely many pieces. Moreover, every action of a Kazhdan group on the circle by homeomorphisms which are individually smooth on the complement of a countable closed set must have a finite orbit \cite{LMT}.  Here we prove that also the groups $\Tsf(\varphi)$ do not have Kazhdan's property $(T)$.

\begin{introthm}\label{t-T}
For every Stone system $(X, \varphi)$ the group $\Tsf(\varphi)$  does not contain infinite subgroups with Kazhdan's property $(T)$. The same holds more generally for the larger group $\PL(\varphi)$ (Definition~\ref{d-group}).
\end{introthm}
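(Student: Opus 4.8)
The plan is to deduce the statement from a-T-menability (the Haagerup property), using the standard fact that a countable discrete group which has both Kazhdan's property $(T)$ and the Haagerup property must be finite. Since property $(T)$ forces a group to be finitely generated, and hence countable, it suffices to prove that every countable subgroup $G_0\le \PL(\varphi)$ is a-T-menable: the Haagerup property passes to subgroups, so any $H\le G_0$ with property $(T)$ would then be finite, which is exactly the claim. Having fixed such a $G_0$, I would first observe that all the combinatorial data of its elements — the finitely many breakpoints in the flow direction, their dyadic heights and integer $\log_2$-slopes, and the clopen pieces on which these are locally constant — involve only a countable Boolean subalgebra $\mathcal{B}_0$ of clopen subsets of $X$, stable under the finitely many generators of $G_0$. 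This keeps all the objects below countable.

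To exhibit a proper conditionally negative definite function on $G_0$, I would build a space with walls (equivalently a $\mathrm{CAT}(0)$ cube complex) adapted to the two directions of $Y^\varphi$, generalizing Farley's construction for Thompson's groups $F$ and $T$. In the flow direction an element is encoded, as for Thompson's groups, by a pair of dyadic subdivisions; here each subdivision is further decorated by a finite partition of $X$ into pieces of $\mathcal{B}_0$ recording the locally constant transverse data. The vertices of the complex are these ``dyadic--clopen subdivisions'' of $Y^\varphi$, two of them joined when one refines the other by a single elementary expansion (inserting one dyadic midpoint over one clopen piece), and $G_0$ acts by precomposition. The associated wall metric yields a conditionally negative definite function $\psi_{\mathrm{flow}}$ controlling the number of carets of an element. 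To control the remaining transverse degrees of freedom I would add a second conditionally negative definite function $\psi_{\mathrm{trans}}$ coming from the transverse dynamics, which is modelled on the topological full group $[[\varphi]]$; the latter is amenable by Juschenko and Monod \cite{JM}, hence a-T-menable, and so carries a proper conditionally negative definite function. The candidate is $\psi=\psi_{\mathrm{flow}}+\psi_{\mathrm{trans}}$, a sum of conditionally negative definite functions, hence itself conditionally negative definite. It then remains to verify that $\psi$ is proper, i.e.\ that $\{g\in G_0:\psi(g)\le R\}$ is finite for every $R$: boundedness of $\psi_{\mathrm{flow}}$ should pin down the caret-pattern up to transverse data, boundedness of $\psi_{\mathrm{trans}}$ the transverse rearrangement, and together these should leave only finitely many elements.

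The main difficulty, and the step I expect to require the real work, is precisely this last properness statement, where the two directions interact. The transverse Cantor direction contributes infinitely many clopen sets, so $\psi_{\mathrm{flow}}$ alone is very far from proper — infinitely many elements share a single caret-pattern — and one must genuinely combine it with $\psi_{\mathrm{trans}}$. The structural danger is a relative-property-$(T)$ phenomenon between the (a-T-menable) flow direction and the (amenable) transverse direction, of the kind exhibited by $\Z^2\rtimes \SL(2,\Z)$, which would obstruct a-T-menability even though both constituents are well behaved; excluding it amounts to showing that the $G_0$-action underlying $\psi$ is metrically proper, that is, that no infinite subgroup keeps both the flow-complexity and the transverse displacement bounded. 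I expect this to follow from the flexibility of the micro-supported action of $\PL(\varphi)$ on $Y^\varphi$, which should prevent any such rigid coupling, but this is the crux.

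As a lighter alternative route, at least for the subgroups $\Tsf(\varphi)$, one could instead restrict a putative property-$(T)$ subgroup $H$ to a single flow orbit, on which it acts by homeomorphisms that are $\PL$ in the flow direction — hence smooth off a discrete set — and invoke the finite-orbit theorem for Kazhdan groups acting on one-manifolds of \cite{LMT}. The gap in this second approach is that a finite orbit (or fixed points) on one flow line does not control $H$ globally, so one would still have to promote such local finiteness to global finiteness across all orbits; the first approach is preferable precisely because it treats the whole group $\PL(\varphi)$ at once and applies to every Stone system $(X,\varphi)$.
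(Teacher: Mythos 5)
Your main route has a genuine gap, and you have located it yourself: the properness of $\psi=\psi_{\mathrm{flow}}+\psi_{\mathrm{trans}}$ is not an administrative verification but the entire content of the claim, and nothing in the proposal supplies it. Worse, the ingredient $\psi_{\mathrm{trans}}$ does not obviously exist in the form you describe: elements of $\Tsf(\varphi)$ and $\PL(\varphi)$ preserve \emph{every} $\Phi$-orbit and act in local coordinates by $(x,t)\mapsto(x,f(t))$, so there is no transverse rearrangement to measure and no natural cocycle towards $[[\varphi]]$; the transverse degrees of freedom are only the clopen decorations of the caret pattern, which is exactly why infinitely many elements share a single $\psi_{\mathrm{flow}}$-value. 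Establishing a proper conditionally negative definite function would prove a-T-menability of these groups, a considerably stronger statement than the theorem, which is not known and not claimed in the paper. Your ``lighter alternative'' also stops at an acknowledged gap (local finiteness on one orbit does not bound $H$), so neither branch closes.

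For comparison, the paper's proof is softer and goes in a different direction. It first reduces to the case of a subshift (Lemma~\ref{l-fg-subshift}: a finitely generated subgroup of $\PL(\varphi)$ maps to $\PL(\bar\varphi)$ for a subshift $(\bar X,\bar\varphi)$ with abelian kernel). A subshift is an intersection of subshifts of finite type, and $\PL(\varphi)=\varinjlim\PL(\varphi_n)$ along surjections (Lemma~\ref{p-direct-limit}). For an irreducible subshift of finite type, periodic orbits are dense, so a Kazhdan subgroup embeds into a product of its restrictions to closed $\Phi$-orbits; each restriction lands in $\PL(\Sbb^1)$ and is finite cyclic by Theorem~\ref{t-PL-T-circle}, forcing the subgroup to be abelian, hence finite, hence trivial by torsion-freeness. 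The passage from the approximating systems to the limit uses the openness of property $(T)$ in the space of marked groups (Shalom, Gromov): a Kazhdan group cannot be a direct limit of finitely generated non-Kazhdan groups. A final orbit-by-orbit argument (Proposition~\ref{p-T-subshift}) handles general subshifts. If you want to salvage your approach, you would need to either prove the properness of a wall metric on $Y^\varphi$ directly --- which amounts to a new theorem --- or abandon a-T-menability and argue, as the paper does, through restrictions to circles and limits of marked groups.
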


Although we cannot apply the results of \cite{Nav-T, LMT, Cor-partial} to the group $\Tsf(\varphi)$ directly (unless $\varphi$ is periodic), we show that it can be approximated by groups to which such results do apply, and which therefore do not have Kazhdan's property $(T)$. Then
the proof  relies on the fact (proved independently by Shalom \cite[Theorem 6.7]{Shalom} and Gromov \cite[\S 3.8.B]{RWRG}) that  the set of property~$(T)$ groups is open in the space of marked groups (in the sense of Grigorchuk \cite{Grig}). 

\begin{rem}
Since the groups $G_\rho$ defined by Hyde and Lodha \cite{HL} arise as subgroups of $\Tsf(\varphi)$ for some $(X, \varphi)$, Theorem \ref{t-T} also implies that they do not have Kazhdan's property $(T)$. 
\end{rem}
\subsection{Further results}

By the same limit argument used to prove Theorem \ref{t-T}, we also obtain the following in Section~\ref{s-T}.

\begin{introthm} \label{t-fp}
For every minimal subshift $(X, \varphi)$, the group $\Tsf(\varphi)$ is not finitely presented. 
\end{introthm}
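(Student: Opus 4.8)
The plan is to prove non-finite-presentability by the same limiting strategy that proves Theorem~\ref{t-T}, combined with a general principle: the set of finitely presented marked groups is \emph{not} closed in the space of marked groups, and more usefully, if a finitely generated group $G$ is a limit of groups $G_n$ in the space of marked groups (all on a fixed number of generators) such that $G$ is a ``proper limit'' in the sense that infinitely many relators of the $G_n$ fail in $G$, then $G$ cannot be finitely presented unless the approximation stabilizes. First I would fix a minimal subshift $(X,\varphi)$ and, using Theorem~\ref{t-fg}, a finite generating set $S$ of $\Tsf(\varphi)$; this gives a marked group $(\Tsf(\varphi),S)$. The goal is to exhibit a sequence of marked quotients or approximants converging to $(\Tsf(\varphi),S)$ in such a way that finite presentability is obstructed.

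The key steps, in order, are as follows. First I would set up the approximation already used for Theorem~\ref{t-T}: approximate $\Tsf(\varphi)$ by groups $\Tsf(\varphi_n)$ (or by quotients corresponding to periodic orbits / finite-length windows of the subshift) which converge to $(\Tsf(\varphi),S)$ in the space of marked groups. Concretely, a minimal subshift admits periodic approximations, and one expects the displacement data defining elements of $\Tsf(\varphi)$ to be determined by longer and longer cylinder sets, so that relations in $\Tsf(\varphi)$ are revealed only at arbitrarily large ``depth.'' Second, I would invoke the standard fact (see e.g.\ the discussion surrounding \cite{Grig}) that if a finitely generated group $G$ is finitely presented, then in the space of marked groups any sequence converging to $(G,S)$ must eventually factor through $G$; equivalently, a finitely presented group is an isolated-type limit in that the defining relations, being finite in number, must hold in all sufficiently close approximants. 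The contrapositive is the engine: if one produces approximants $(G_n,S)\to(\Tsf(\varphi),S)$ together with elements that are relators in $\Tsf(\varphi)$ but are \emph{not} relators in $G_n$ for arbitrarily large $n$, finite presentability fails. Third, I would supply exactly such relators: using minimality and the infinitude of the subshift, I would construct, for each $n$, a word $w_n$ in $S$ that represents the identity in $\Tsf(\varphi)$ but whose triviality genuinely requires information at depth $n$ of the subshift, hence is nontrivial in any length-$n$ approximant.

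The main obstacle, as I expect it, is the third step: producing a concrete infinite family of relations $w_n$ that are ``irreducible'' in the sense that no finite subset of relations implies them. This is precisely where one must use the combinatorial structure of the minimal subshift. The natural source of such relations is the interplay between the piecewise-linear (dyadic) behaviour in the flow direction and the locally constant displacement in the Cantor direction: commutators of elements supported on disjoint cylinder sets vanish, and minimality forces infinitely many independent disjointness relations coming from the (non-periodic) return structure of $\varphi$. Making this rigorous requires showing these relations are not consequences of boundedly many others, which I would do by exhibiting the approximants $G_n$ explicitly as finite-presentation-detecting quotients and checking $w_n\neq 1$ there.

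Finally, I would assemble the argument: assume for contradiction that $\Tsf(\varphi)$ is finitely presented with relator set contained in words of length $\le N$. Since $(G_n,S)\to(\Tsf(\varphi),S)$, for all $n$ large the balls of radius $N$ in $G_n$ and in $\Tsf(\varphi)$ agree, so every defining relator holds in $G_n$, whence there is a surjection $\Tsf(\varphi)\twoheadrightarrow G_n$ compatible with $S$. But the relator $w_n$ (of length possibly exceeding $N$) is trivial in $\Tsf(\varphi)$ and therefore must be trivial in every such quotient $G_n$, contradicting $w_n\neq 1$ in $G_n$ for large $n$. This contradiction shows no finite $N$ can work, so $\Tsf(\varphi)$ is not finitely presented. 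The delicate point throughout is ensuring the approximating sequence both converges in the marked-group topology \emph{and} carries relators detecting arbitrarily deep structure, and I anticipate that minimality (ruling out eventual periodicity of the relation structure) is exactly the hypothesis that guarantees the non-stabilization required for the contradiction.
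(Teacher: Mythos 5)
Your overall strategy is the same as the paper's: approximate $(X,\varphi)$ by the subshifts of finite type $(X_n,\varphi_n)$ determined by the allowed words of length $n$, observe that $\Tsf(\varphi)=\varinjlim \Tsf(\varphi_n)$ with surjective connecting maps between finitely generated groups (Lemma~\ref{p-direct-limit} and Theorem~\ref{t-fg}), and then invoke the standard fact that a finitely presented marked limit of such a sequence forces the epimorphisms $\Tsf(\varphi_n)\to\Tsf(\varphi)$ to be eventually injective. Your final assembly of the contradiction is correct. However, the step you yourself flag as ``the main obstacle'' --- producing, for each $n$, an element that is trivial in $\Tsf(\varphi)$ but non-trivial in the approximant --- is left as a genuine gap, and the route you sketch for filling it is off-track. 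You do \emph{not} need a family of relations that are ``irreducible'' or ``not consequences of boundedly many others''; the direct-limit argument only requires that each kernel $\ker\bigl(\Tsf(\varphi_n)\to\Tsf(\varphi)\bigr)$ be non-trivial, since finite presentability of the limit would force these kernels to vanish for large $n$. Searching for independent commutator relations encoded by the return structure of $\varphi$ is unnecessary and would be hard to make rigorous.

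The missing ingredient is supplied immediately by Lemma~\ref{l:surjection}: since an infinite minimal subshift is never of finite type (a subshift of finite type has periodic points, which would form a proper closed invariant subset), the inclusion $X\subsetneq X_n$ is strict for every $n$. Taking a non-empty clopen set $C\subset X_n\setminus X$ and a short dyadic interval $I$, the subgroup $F_{C,I}\le\Tsf(\varphi_n)$ is infinite and acts trivially on $Y^{\varphi}\subset Y^{\varphi_n}$, so it lies in the kernel of the restriction epimorphism $\Tsf(\varphi_n)\to\Tsf(\varphi)$. Any non-trivial word in these elements is your $w_n$. Note also that the infinitude hypothesis is genuinely needed and should be stated: a finite minimal subshift is a single periodic orbit, it \emph{is} of finite type, and the corresponding group is a finitely presented central quotient of $\widetilde T$, so the theorem as you would prove it only covers infinite minimal subshifts (which is what the paper's corollary in Section~\ref{s-T} asserts).
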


In Section \ref{s-isomorphisms}, we clarify how the isomorphism class of the group $\Tsf(\varphi)$ depends on $(X, \varphi)$.  Two Stone systems $(X_1, \varphi_1)$ and $(X_2, \varphi_2)$ are said to be \emph{flow equivalent} if there exists a homeomorphism $q\colon Y^{\varphi_1}\to Y^{\varphi_2}$ between the suspensions which sends orbits to  orbits in an orientation-preserving fashion. By a result of Parry and Sullivan \cite{PS} (sse Theorem \ref{t-PS}), this is equivalent to the existence of clopen subsets $X'_i\subset X_i$ intersecting every orbit, with the property that the \emph{first return maps} to $X'_1$ and to $X'_2$ are topologically conjugate. Flow equivalence  is well-studied in symbolic dynamics and $C^{*}$-algebras.  By combining the Parry--Sullivan characterisation of flow equivalence with a general reconstruction result for groups of homeomorphisms by Ben Ami and Rubin \cite{BAR} (see Theorem~\ref{t-BAR}), we observe the following.
\begin{introthm} \label{t-isom}
For $i=1,2$, let $(X_i, \varphi_i)$ be a Stone system. Assume that $(X_i, \varphi_i)$ admits a dense orbit for $i=1,2$. Then the groups $\Tsf(\varphi_1)$ and $ \Tsf(\varphi_2)$ are isomorphic if and only if the system $(X_1, \varphi_1)$ is flow equivalent to $(X_2, \varphi_2)$ or to $(X_2, \varphi_2^{-1})$. 
\end{introthm}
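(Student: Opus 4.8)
The plan is to prove the two directions separately, with the forward direction (isomorphism implies flow equivalence) resting on the reconstruction theorem of Ben Ami and Rubin, and the reverse direction (flow equivalence implies isomorphism) being essentially a matter of transporting the defining structure of $\Tsf(\varphi)$ along a flow equivalence.

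For the reverse direction, suppose $(X_1,\varphi_1)$ is flow equivalent to $(X_2,\varphi_2)$, witnessed by a homeomorphism $q\colon Y^{\varphi_1}\to Y^{\varphi_2}$ sending $\Phi_1$-orbits to $\Phi_2$-orbits in an orientation-preserving way. The key point is that the definition of $\Tsf(\varphi)$ is almost flow-equivalence-invariant: it consists of orbit-preserving homeomorphisms acting by dyadic piecewise-linear maps \emph{in the flow direction}. Conjugation by $q$ certainly carries orbit-preserving homeomorphisms to orbit-preserving homeomorphisms, so the issue is whether the dyadic $\PL$ structure is preserved. Here I would invoke the Parry--Sullivan characterisation (Theorem \ref{t-PS}): flow equivalence is detected by topological conjugacy of first-return maps to suitable clopen cross-sections. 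The natural strategy is to show that the dyadic $\PL$ structure in the flow direction can be reconstructed intrinsically from the group $\Tsf(\varphi)$ together with the \emph{germ} of the flow-direction action, and that this intrinsic characterisation is preserved by $q$. Since a flow equivalence need not preserve the arclength parametrisation of orbits, one does not expect $q$ itself to conjugate $\Tsf(\varphi_1)$ onto $\Tsf(\varphi_2)$; rather, one should compose $q$ with a reparametrisation of each orbit that restores the dyadic structure, using that $q$ is locally constant in the transverse direction on a fine enough clopen partition. The case distinction between $\varphi_2$ and $\varphi_2^{-1}$ enters because reversing the direction of the flow (equivalently, post-composing $q$ with the orientation-reversing flow equivalence) replaces $\varphi$ by $\varphi^{-1}$, and the group $\Tsf(\varphi)$ is manifestly insensitive to this since the dyadic $\PL$ group on an interval is symmetric under orientation reversal.

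For the forward direction, suppose $\Tsf(\varphi_1)\cong\Tsf(\varphi_2)$ as abstract groups. Because the system has a dense orbit, the restriction of the action of $\Tsf(\varphi_i)$ to $Y^{\varphi_i}$ is micro-supported and faithful, and one can apply the reconstruction theorem of Ben Ami and Rubin (Theorem \ref{t-BAR}) to conclude that any abstract isomorphism $\Tsf(\varphi_1)\to\Tsf(\varphi_2)$ is \emph{spatially implemented}: it is induced by a homeomorphism $\theta\colon Y^{\varphi_1}\to Y^{\varphi_2}$ that conjugates the two actions. The reconstruction theorem typically requires verifying hypotheses such as the action being locally dense or moving (that rigid stabilisers act without too many common fixed points); I would check these using minimality of the action on the relevant orbit closure and the fact that $\Tsf(\varphi_i)$ contains elements supported in arbitrarily small flow-boxes. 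Once $\theta$ is in hand, the remaining task is to argue that a homeomorphism conjugating the two groups must send $\Phi_1$-orbits to $\Phi_2$-orbits, in an orientation-preserving or orientation-reversing fashion: this should follow because the orbits of the flow are exactly the path-components of $Y^{\varphi_i}$, which are reconstructible from the group (the orbit through a point is the union of supports of elements fixing every transverse clopen set through that point), and because the cyclic/linear order along each orbit is encoded in the dyadic $\PL$ dynamics. Orientation-preservation versus reversal along orbits yields precisely the dichotomy between flow equivalence to $(X_2,\varphi_2)$ and to $(X_2,\varphi_2^{-1})$.

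The main obstacle I anticipate is in the reverse direction, specifically in showing that a flow equivalence $q$ can be upgraded to a genuine group isomorphism despite not preserving the metric parametrisation of orbits. The delicate point is reconciling the dyadic $\PL$ condition, which refers to a specific affine structure on each orbit, with the fact that $q$ may distort arclength nonlinearly; the resolution must exploit that $q$ is locally (in the transverse direction) a product map respecting the flow up to a reparametrisation that is itself controlled by the first-return structure. Verifying that the reparametrisation needed to restore dyadic $\PL$-ness is available within the allowable class, and does so coherently across a clopen partition so as to define a single homeomorphism intertwining the groups, is where the technical heart of the argument lies. The forward direction, by contrast, is expected to be relatively clean once the hypotheses of Theorem \ref{t-BAR} are verified, since reconstruction does the heavy lifting.
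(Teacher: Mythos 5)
Your proposal follows the same route as the paper: Ben Ami--Rubin reconstruction plus the path-component/orientation argument for the forward direction, and Parry--Sullivan plus a correction of the flow equivalence for the reverse direction. The forward direction is essentially complete as you describe it; the hypothesis to verify for Theorem \ref{t-BAR} in the form stated in the paper is fragmentability, which is Proposition \ref{p-fragmentable}, and your auxiliary ``reconstruction of orbits from supports'' is unnecessary since any conjugating homeomorphism automatically preserves path-components. Note also that the dense-orbit hypothesis is what forces the conjugating homeomorphism to preserve or reverse the orientation of \emph{all} orbits simultaneously (rather than a mixture), which is what makes it a flow equivalence to $(X_2,\varphi_2)$ or to $(X_2,\varphi_2^{-1})$ and not something weaker.

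The one place where you stop short is exactly the step you flag as the ``technical heart'' of the reverse direction, and it is worth knowing that the paper resolves it with a short explicit construction rather than an abstract reparametrisation argument. Using Theorem \ref{t-PS}, identify the two clopen transversals with a single clopen set $C$ carrying a common first return map $\psi$, and let $T^{(1)},T^{(2)}\colon C\to\N$ be the two return-time functions. Partition $C=\bigsqcup_{j}C_j$ into clopen pieces on which $T^{(1)}\equiv n_j$ and $T^{(2)}\equiv m_j$ are both constant. Then $Y^{\varphi_1}$ and $Y^{\varphi_2}$ decompose as unions of the closed towers $\overline{U^{(1)}_{C_j\times(0,n_j)}}$ and $\overline{U^{(2)}_{C_j\times(0,m_j)}}$, and any choice of homeomorphisms $f_j\colon[0,n_j]\to[0,m_j]$ induces maps $\id\times f_j$ between corresponding towers; since $f_j(0)=0$ and $f_j(n_j)=m_j$, these agree on the transversal where the towers meet (the top of each tower maps to $\psi(C_j)\subset C$ in both suspensions) and glue to a single homeomorphism $q$. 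Choosing each $f_j$ to be dyadic PL (possible because $n_j,m_j$ are positive integers) makes $q$ locally of the form $\id\times(\text{dyadic PL})$ in charts, so conjugation by $q$ preserves the defining local form of Definition \ref{d-group} and carries $\Tsf(\varphi_1)$ onto $\Tsf(\varphi_2)$. So the ``reparametrisation within the allowable class'' you were worried about exists for elementary reasons, with no delicate coherence issue beyond the boundary matching just described.
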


In \cite{APP}, Aliste-Prieto and Petite observed that \cite{BAR} implies the analogous result for $\Hsf_0(\varphi)$.

Let us conclude this introduction with a question, which aims at clarifying a connection between the outer automorphism group $\Out(\Tsf(\varphi))$ and the \emph{mapping class group} of subshifts, extensively studied in  \cite{APP,Boyle,BCE,BC,SY}.

For simplicity, let us restrict to the case where $(X, \varphi)$ is a topologically transitive subshift. Note that by topological transitivity, every element $g\in \Hsf(\varphi)$ must simultaneously preserve or reverse the orientation of every $\Phi$-orbit, as this is determined by what it does on any dense $\Phi$-orbit. The \emph{complete mapping class group} of $(X, \varphi)$ is defined as the quotient $\MCG_{\pm}(\varphi)=\Hsf(\varphi)/\Hsf_0(\varphi)$~\footnote{The {mapping class group} of $(X, \varphi)$ is more usually defined as its index 2 subgroup $\MCG_+(\varphi)=\Hsf_+(\varphi)/\Hsf_0(\varphi)$ of mapping classes of elements that send each $\Phi$-orbit to a $\Phi$-orbit in an orientation-preserving way.}. This group is countable, see \cite[Corollary 2.3]{BC}.

The result of Ben Ami and Rubin \cite{BAR}  allows to identify the automorphism group $\Aut(\Tsf(\varphi))$ with the normaliser of $\Tsf(\varphi)$ in $\Hsf(\varphi)$. In particular, we have a homomorphism $\Out(\Tsf(\varphi))=\Aut(\Tsf(\varphi))/\Tsf(\varphi)\to \MCG_{\pm}(\varphi)$.  
\begin{ques} Let $(X,\varphi)$ be a topologically transitive subshift. Is the map $\Out(\Tsf(\varphi))\to \MCG_{\pm}(\varphi)$ a group isomorphism?
\end{ques}

Of course the answer might depend non-trivially on $(X, \varphi)$. The result of Brin \cite{Bri} can be reinterpreted by saying that for Thompson's group $T$ we have indeed an isomorphism $\Out(T)\to \MCG_{\pm}(\T)=\Z/2\Z$.

Finally this paper contains an appendix (Appendix \ref{s-polish}), in which we endow the group $\Hsf_0(\varphi)$ with a natural Polish group topology, and show that the group $\Tsf(\varphi)$ is dense in $\Hsf_0(\varphi)$ with respect to this topology. This will be useful for the discussion in Section \ref{s-subgroups}.

\section{Preliminaries on simplicity of groups of homeomorphisms}\label{ss-simple}

In this short section, we recall a well-known method to study normal subgroups  in groups of homeomorphisms and prove their simplicity.  Moreover, this  serves in part as a motivation for our construction.

 For every group $G$ acting by homeomorphisms on a topological space $Y$, and every open subset $U\subset Y$, the \emph{rigid stabiliser} $\rist(U)$ is defined as the subgroup consisting of elements that move only points in  $U$. The action of $G$ on $Y$ is said to be \emph{micro-supported} if $\rist(U)$ is non-trivial for every non-empty open subset $U$.

  The short argument giving the following lemma is classical and has been used in many proofs of simplicity.  With this formulation, it is exactly \cite[Lemma 4.1]{Nek-fp}. 
 
 \begin{lem}[(Double Commutator Lemma)]\label{l-epstein}
	Let $G$ be a group of homeomorphisms of a Hausdorff space $Y$, and $N\unlhd G$ be a non-trivial normal subgroup. Then there exists a non-empty open subset $U\subset Y$ such that the derived subgroup $\rist(U)'$ of the rigid stabilizer is contained in $N$. 
\end{lem}
\begin{proof}
Take a non-trivial element $g\in N$ and an open subset $U\subset Y$ such that $g(U)\cap U=\emptyset$. Then  for every two elements $h_1$ and $h_2\in \rist(U)$, the element $gh_1g^{-1}$ is supported in $g(U)$ and therefore commutes with $h_1$ and $h_2$. Using this,  one readily checks that the commutator $[h_1,h_2]=h_1h_2h_1^{-1}h_2^{-1}$ equals $[[h_1,g],h_2]$, and therefore belongs to $N$.
\end{proof}

Note that the lemma is valid without any assumptions on the action of $G$ on $Y$, but its conclusion is trivial unless the action is {micro-supported}.

We will use the following immediate consequence of the Double Commutator Lemma, which is a variant of Epstein's simplicity criterion \cite{Epstein}. We say that a group of homeomorphisms of a Hausdorff space $Y$ is \emph{fragmentable} if for every open cover $\mathcal{U}$ of $Y$, the group $G$ is generated by the derived subgroups $\mathsf{RiSt}(U)'$ for $U\in \mathcal{U}$.

\begin{prop}\label{p-simplicity}
Let $G$ be a  non-trivial group of homeomorphisms of a Hausdorff space $Y$. If $G$ is  fragmentable and acts minimally on $Y$, then   $G$  is simple. 
\end{prop}
\begin{proof}
Let $N\unlhd G$ be a non-trivial normal subgroup of $G$, and $V\subset Y$ be a non-empty open set given by the Double Commutator Lemma (Lemma \ref{l-epstein}). Observe that for every $g\in G$, the group $N$ contains $g\mathsf{RiSt}(V)'g^{-1}=\mathsf{RiSt}(g(V))'$. By minimality, the subsets $g(V)$ form an open cover of $Y$, and thus $N=G$.
\end{proof}

\begin{rem} \label{r-micro-supp-R}
Note that if $G$ is a \emph{finitely generated} simple group, then $G$ cannot admit any micro-supported action by homeomorphisms on any  locally compact, \emph{non-compact} Hausdorff space $Y$. To wit, assume by contradiction that this is the case. Let $N$ be the subgroup of $G$ generated by all rigid stabiliser $\rist(U)$ of relatively compact open subsets $U\subset Y$. Then $N$ is non-trivial and normal, so that by simplicity $N=G$. However, $N$ cannot be finitely generated unless $Y$ is compact.\end{rem}

\section{The group $\Tsf(\varphi)$ and its first properties}\label{s-group}

As mentioned earlier, we are assuming that $(X, \varphi)$ is a dynamical system consisting of a non-empty Stone space  $X$ and of a homeomorphism $\varphi\colon X\to X$. We denote by $Y^\varphi=\left(X\times \R\right)/\Z$ its suspension, and by $\Phi$ the suspension flow on $Y^\varphi$. We do not make any further assumptions on $(X, \varphi)$ unless  specified.

A subset $C\subset X$ is \emph{clopen} if it is both open and closed. Given a clopen subset $C$, its \emph{first return time} is the quantity 
\begin{equation}\label{e-smallest_return}\tau_C:=\min \{n\ge 1\colon \exists x\in C, \varphi^n(x)\in C\}\in \N\cup\{\infty\}.\end{equation}
Let $C\subset X$ be a clopen subset, and $J\subset \R$ be an open  interval  of length $|J|< \tau_C$ (in particular, this is always verified if $|J|<1$). For such a pair,  the inclusion of $C\times J$ into $X\times \R$ defines an injective map $\pi_{C, J}\colon C\times J\hookrightarrow Y^\varphi$, that we call a \emph{chart}. The image $U_{C \times J}=\pi_{C,J}(C\times J)$  will be called the \emph{domain} associated with $(C, J)$. 

The  domains satisfy the relation
\begin{equation} \label{e-chart} U_{C \times J}=U_{\varphi^n(C) \times (J-n)}, \quad n\in \Z. \end{equation}
Here $J-n$ stands for the translate $J-n=\{x-n\colon x\in J\}$.

We call an  interval $I\subset \R$ \emph{dyadic} if it is bounded and its endpoints are dyadic rationals, i.e.~rationals of the form $p/2^q$ for $p, q\in \Z$. A chart $\pi_{C, J}$ and a domain $U_{C\times J}$ are  \emph{dyadic} if $J$ is dyadic.
 Given two   intervals $I$ and $J\subset \R$, a \emph{piecewise linear (PL) homeomorphism} $f\colon I\to J$ is a homeomorphism which is piecewise affine, with finitely many discontinuity points for the derivatives (also called \emph{breakpoints}). We will further say that $f:I\to J$ is \emph{dyadic} if both $I$ and $J$ are dyadic and its breakpoints are dyadic rationals and the restriction of $f$  to every piece  is of the form $x\mapsto 2^mx +b$ for some $m\in \Z$ and $b$ dyadic rational. Recall that the group of all dyadic PL  homeomorphisms of $[0, 1]$ is \emph{Thompson's group} $F$, whereas the analogous definition on the circle gives \emph{Thompson's group} $T$. Both groups are finitely generated. For preliminaries on Thompson's groups we refer to \cite{CFP,Bi-St}.

\begin{dfn} \label{d-group}
We let  $\PL(\varphi)$ (resp.~$\Tsf(\varphi)$) be the group consisting of all elements $g\in \Hsf_0(\varphi)$ such that for every $y\in Y^\varphi$, there exist a dyadic chart $\pi_{C,J}$ with  $y\in U_{C\times J}$, and  a PL homeomorphism (resp.~a dyadic PL homeomorphism) $f\colon J\to f(J)\subset \R$ such that $g(U_{C\times J})=U_{C\times f(J)}$ and such that  the map $\pi_{C, f(J)}^{-1}\circ g\circ \pi_{C, J} \colon C\times J \to C\times f(J)$ is given in coordinates by  $(x, t)\mapsto (x, f(t))$.
\end{dfn}

\begin{rem} Clearly we have the inclusions $\Tsf(\varphi) \le \PL(\varphi) \le \Hsf_0(\varphi)$. 
 While the groups $\Hsf_0(\varphi)$ and $\PL(\varphi)$ are always uncountable, the group $\Tsf(\varphi)$ is countable as soon as $X$ is metrisable. In fact, in this case the set of clopen subsets of $X$ is countable, and so is the set of dyadic charts. Since every $g\in \Tsf(\varphi)$ is determined by a finite collection of dyadic charts and of dyadic $\PL$-homeomorphisms, the conclusion follows.  
\end{rem}

Let us clarify when these groups are left-orderable. The system $(X, \varphi)$ is said to be \emph{topologically free} if the set of infinite orbits is dense in $X$. By Baire Category Theorem, this is equivalent to the fact that for every $n\ge 1$, the set of points of period $n$ has empty interior.

\begin{prop}\label{p-lo}
	If $(X,\varphi)$ is topologically free,    then $\Hsf_0(\varphi)$ (and hence $\Tsf(\varphi)$) is left-orderable. If, moreover, $X$ is metrisable, then $\Hsf_0(\varphi)$ is isomorphic to a subgroup  of $\Homeo_+(\R)$.
	
\end{prop}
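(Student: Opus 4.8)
The plan is to exhibit a faithful action of $\Hsf_0(\varphi)$ by order-preserving bijections on a suitable linearly ordered set, which is enough to conclude left-orderability; since $\Tsf(\varphi)\le \Hsf_0(\varphi)$ and subgroups of left-orderable groups are left-orderable, this also settles the parenthetical claim. The starting point is the characterisation recalled in the introduction: every $g\in\Hsf_0(\varphi)$ is of the form $g(y)=\Phi^{\tau(y)}(y)$ for a continuous $\tau\colon Y^\varphi\to\R$, and in particular $g$ preserves each $\Phi$-orbit.

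First I would record two facts about infinite orbits. If $x\in X$ has infinite $\varphi$-orbit, the map $s\mapsto [x,s]$ identifies the corresponding $\Phi$-orbit with the line $(\R,<)$, and the restriction of any $g=\Phi^{\tau}$ to this line reads as $s\mapsto s+\tau([x,s])$; since $\tau$ is bounded (being continuous on the compact space $Y^\varphi$) this is a continuous injection of $\R$ into itself that tends to $\pm\infty$ as $s\to\pm\infty$, hence an orientation-preserving homeomorphism of $\R$. Thus $\Hsf_0(\varphi)$ acts on each infinite $\Phi$-orbit by order-automorphisms of $(\R,<)$, preserving the orbit setwise. Second, topological freeness says exactly that the union of infinite $\varphi$-orbits is dense in $X$; chart by chart (using the domains $U_{C\times J}$) this implies that the union of infinite $\Phi$-orbits is dense in $Y^\varphi$. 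Consequently, if $g\in\Hsf_0(\varphi)$ acts trivially on every infinite $\Phi$-orbit, it is the identity on a dense set, hence $g=\id$.

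I would then form $\Omega$, the disjoint union of all infinite $\Phi$-orbits, each identified with $(\R,<)$, fix an arbitrary total order on the index set, and equip $\Omega$ with the associated lexicographic order. Because each $g$ preserves every orbit and acts order-preservingly within it, $\Hsf_0(\varphi)$ acts on $(\Omega,<)$ by order-automorphisms, and this action is faithful by the density argument above. To pass from such an action to a left order, I would use the classical positive-cone construction: well-order $\Omega$, and for $1\neq g$ let $\omega_g$ be the least element moved by $g$; declare $g$ positive if $g\omega_g>\omega_g$. The only point requiring care is that the set of positive elements is closed under multiplication, which follows from the order-preserving property of the action (if $h$ moves its least point $\omega$ upward and $g$ fixes $\omega$, then $g(h\omega)>g\omega=\omega$). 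This yields left-orderability of $\Hsf_0(\varphi)$.

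For the metrisable case, $Y^\varphi$ is compact metrisable, hence second countable, so I can choose countably many infinite $\Phi$-orbits $O_0,O_1,\dots$ whose union is still dense (pick one orbit meeting each member of a countable base), on which the action remains faithful. I would then combine the countably many orientation-preserving line-actions into a single faithful action on $\R$: order-embed the lexicographic sum $\bigsqcup_n O_n$ into $\R$ by sending $O_n$ order-isomorphically onto a bounded interval $(n,n+1)$, realise each $g$ as the corresponding homeomorphism on $\bigcup_n(n,n+1)$ and as the identity elsewhere, and check that fixing the integer endpoints glues these into a genuine element of $\Homeo_+(\R)$. Faithfulness of the combined action is inherited from joint faithfulness. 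The main obstacle I anticipate is this last step rather than the left-orderability itself: because $\Hsf_0(\varphi)$ is uncountable, one cannot simply invoke the standard equivalence ``countable left-orderable $\iff$ subgroup of $\Homeo_+(\R)$'', and must instead check by hand that the order-automorphisms of the countable lexicographic sum extend continuously to all of $\R$ (equivalently, to its Dedekind completion), which is exactly where the boundedness of the tiling intervals and orientation-preservation are used.
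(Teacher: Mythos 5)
Your proposal is correct and follows essentially the same route as the paper: identify the infinite $\Phi$-orbits with $(\R,<)$, use topological freeness to get a dense union of such orbits and hence faithfulness of the restricted action, and in the metrisable case glue countably many such orbits into intervals $(n,n+1)$ to embed $\Hsf_0(\varphi)$ into $\Homeo_+(\R)$. The only cosmetic difference is at the end: the paper simply observes that $\Hsf_0(\varphi)$ embeds into the product $\prod_\lambda \Homeo_+(\mathcal O_\lambda)$ of left-orderable groups, whereas you unwind the standard proof of that fact by hand (lexicographic order on the disjoint union of orbits plus the least-moved-point positive cone).
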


\begin{proof}
	Observe that if $(X,\varphi)$ has a dense collection of non-periodic orbits, then also $(Y^\varphi,\Phi)$ does.
	Let $(\mathcal O_\lambda)_{\lambda\in \Lambda}$ be a collection  of non-periodic $\Phi$-orbits, whose union is dense in $Y^\varphi$. Every orbit $\mathcal O_\lambda$ can be identified with the real line $\R$, and the restriction of the action of $\Hsf_0(\varphi)$ defines a morphism $\rho_\lambda:\Hsf_0(\varphi)\to \Homeo_+(\mathcal O_\lambda)$. Observe that density of $\bigcup \mathcal O_\lambda$ implies that the morphism \mbox{$(\rho_\lambda): \Hsf_0(\varphi)\to \prod \Homeo_+(\mathcal O_\lambda)$} is injective. Thus $\Hsf_0(\varphi)$ embeds in a product of left-orderable groups and is therefore  left-orderable. If moreover $X$ is metrisable, we can extract a countable  collection $(\mathcal{O}_n)_{n\in \N}$ from $(\mathcal{O}_\lambda)$ whose union remains dense. Identifying each $\mathcal{O}_n$ with the interval $(n, n+1)$, we obtain a faithful action of $\Hsf_0(\varphi)$ on $\R$.
\end{proof}

We will use the following fact.

\begin{lem}[(Minimality)]\label{l-minimal}
	For every $y\in Y^\varphi$, the $\Tsf(\varphi)$-orbit of $y$ is dense in the $\Phi$-orbit of $y$. In particular the $\Tsf(\varphi)$-action on $Y^\varphi$ is minimal if the system $(X, \varphi)$ is so.
\end{lem}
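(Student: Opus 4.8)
The plan is to first produce, inside a single chart, elements of $\Tsf(\varphi)$ that imitate Thompson's group $F$ in the flow direction, and then to spread their action along the whole $\Phi$-orbit by a connectedness argument.

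First I would fix $y=[x,s]$ and, given any dyadic interval $J=(a,b)$ with $s\in J$, $|J|<1$, and any clopen $C\ni x$ (so that $\pi_{C,J}$ is a chart, recalling $\tau_C\ge 1$), consider a dyadic PL homeomorphism $f\colon \overline J\to\overline J$ fixing $a$ and $b$. Setting $g(\pi_{C,J}(z,t))=\pi_{C,J}(z,f(t))$ on $U_{C\times J}$ and $g=\id$ elsewhere gives a well-defined homeomorphism of $Y^\varphi$: the only point to check is continuity across the boundary of $U_{C\times J}$, which holds because $f(t)\to a$ and $f(t)\to b$ at the endpoints. Moreover $g=\Phi^{\tau}$ for the continuous function $\tau$ equal to $f(t)-t$ on the chart and $0$ outside, so $g\in\Hsf_0(\varphi)$, and by construction $g\in\Tsf(\varphi)$. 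As $f$ ranges over the dyadic PL homeomorphisms of $\overline J$ fixing the endpoints, which form a copy of Thompson's group $F$, the points $g(y)=[x,f(s)]$ range over $\{[x,f(s)]:f\in F\}$. Using the standard fact that $F$ acts on the interior of $J$ with all orbits dense, I obtain the \emph{local density} statement: for every $y$ and every such chart through $y$, the $\Tsf(\varphi)$-orbit of $y$ is dense in the orbit-segment $\{[x,t]:t\in J\}$.

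To globalize, I would set $K=\overline{\Tsf(\varphi)\cdot y}$ and $A=\{r\in\R : [x,s+r]\in K\}$. Since $r\mapsto[x,s+r]$ is continuous and $K$ is closed, $A$ is closed, and clearly $0\in A$. For openness, given $r_0\in A$ put $y_0=[x,s+r_0]\in K$ and choose a dyadic interval $J$ of length $<1$ with $s+r_0$ in its interior. Local density gives that the orbit-segment over $J$ lies in $\overline{\Tsf(\varphi)\cdot y_0}$; and the elementary observation that $z\in\overline{\Tsf(\varphi)\cdot y_0}$ together with $y_0\in\overline{\Tsf(\varphi)\cdot y}$ force $z\in\overline{\Tsf(\varphi)\cdot y}$ (compose an element approximating $z$ from $y_0$ with one pushing $y$ into its preimage) shows this segment lies in $K$. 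Hence $A$ contains an open interval around $r_0$, so $A$ is open. As $\R$ is connected and $A$ is a nonempty clopen subset, $A=\R$, i.e.\ the entire $\Phi$-orbit of $y$ is contained in $K$. Since conversely $\Tsf(\varphi)\cdot y$ is contained in the $\Phi$-orbit (elements of $\Hsf_0(\varphi)$ preserve every $\Phi$-orbit), the $\Tsf(\varphi)$-orbit of $y$ is dense in the $\Phi$-orbit of $y$.

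Finally, for the ``in particular'' statement, I would invoke the standard fact that minimality of $(X,\varphi)$ implies that every $\Phi$-orbit is dense in $Y^\varphi$ (any point $[z,u]$ is approximated by $[\varphi^n x,u]=[x,u+n]$, using density of the $\varphi$-orbit of $x$). Combined with the density just proved, $\overline{\Tsf(\varphi)\cdot y}$ contains the closure of the $\Phi$-orbit of $y$, which is all of $Y^\varphi$; hence the action is minimal. I expect the main obstacle to be precisely this globalization step: ensuring that the local Thompson elements glue into genuine elements of $\Tsf(\varphi)$, and that one passes correctly from a single chart to the entire (possibly non-compact or circular) orbit. The connectedness argument on $A$ is what makes this clean, replacing a bookkeeping-heavy explicit chain of overlapping charts.
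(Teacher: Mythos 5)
Your argument is correct, but it takes a genuinely different route from the paper's. The paper's proof is a one-liner: by the diagonal construction of Proposition \ref{p-nonamenable}, the group $\Tsf(\varphi)$ contains the \emph{global} flow translation $[x,s]\mapsto[x,s+r]$ for every dyadic rational $r$ (the map $t\mapsto t+r$ is dyadic PL in every chart), so the $\Tsf(\varphi)$-orbit of $y$ already contains the dense set of dyadic times of the $\Phi$-orbit. You instead use only the chart-local subgroups $F_{C,J}$ of Definition \ref{d-local-F}, obtain density of the orbit in a single orbit-segment from the standard fact that $F$ has dense orbits in $(0,1)$, and then propagate along the orbit by showing that $A=\{r:[x,s+r]\in\overline{\Tsf(\varphi)\cdot y}\}$ is a nonempty clopen subset of $\R$. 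All the steps are sound; the one place where your justification is looser than it needs to be is the transitivity claim $z\in\overline{G\cdot y_0}$, $y_0\in\overline{G\cdot y}\Rightarrow z\in\overline{G\cdot y}$, which is cleanest to get from the observation that $\overline{G\cdot y}$ is a closed $G$-invariant set (each $g$ being a homeomorphism), rather than by composing approximating elements. What your approach buys is independence from the global translations: it only uses elements supported in single charts, so it would prove the same minimality statement for the subgroup generated by the $F_{C,J}$ without invoking the embedding of $\widetilde T$ (which the paper only establishes later, in Section \ref{s-LO}). What it costs is length: the connectedness bookkeeping replaces an immediate observation.
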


\begin{proof}
	The group $\Tsf(\varphi)$ contains the  translation $[x,s]\mapsto [x,s+r]$ for every dyadic rational $r$ (cf.~Proposition \ref{p-nonamenable}). That is, the group $\Tsf(\varphi)$ contains a dense set of times of the flow $\Phi$, so its action on every $\Phi$-orbit must be minimal.
	\end{proof}

\section{A family of subgroups of $\Tsf(\varphi)$}\label{s-subgroups}
In this section we introduce an (infinite) family of subgroups of $\Tsf(\varphi)$ isomorphic to Thompson's group $F$ (see Definition \ref{d-local-F}), which constitute a generating system for $\Tsf(\varphi)$, and derive some useful properties of this family.

For every open  dyadic interval $J\subset \R$, we will denote by $F_J$  the group of dyadic PL self-homeomorphisms of $\R$ which are the identity on the complement of $J$. Defined this way, $F_J$ is naturally a subgroup of $\Homeo_+(\R)$.  If  ${J}$ is non-empty, $F_J$ is always conjugate to Thompson's group $F=F_{(0,1)}$, a conjugacy being given by any  dyadic PL homeomorphism which maps $J$ onto $ (0,1)$.    Its derived subgroup $F'_J$ is simple and consists of elements whose \emph{closed} support is contained in~$J$. 
 More precisely, we have the following: for every open  dyadic interval $J\subset \R$, one has
\begin{equation}\label{e:derivedsub}
F'_J=\bigcup_{\overline{I}\subset {J}}F_I = \bigcup_{\overline{I}\subset {J}}F'_I,
\end{equation}
 where $I$ varies over open dyadic intervals whose closure is contained in $J$. We will  use repeatedly the following well-known properties of Thompson's group $F$:
\begin{lem}\label{l:conj_in_F}
For any  open dyadic intervals $I,J_1$ and $J_2$ such that $I\supset \overline{J_1},\overline{J_2}$, there exists an element $h\in F'_I$ such that $h(J_1)=J_2$ and thus $hF_{J_1}h^{-1}=F_{J_2}$. 
\end{lem}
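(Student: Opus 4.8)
The plan is to reduce the statement about conjugating $J_1$ to $J_2$ inside $F'_I$ to two standard facts about Thompson's group $F$: transitivity on dyadic intervals by elements of prescribed small support, together with the identification \eqref{e:derivedsub} of $F'_I$ as the union of the $F_{I'}$ over intervals $I'$ compactly contained in $I$. Concretely, since $\overline{J_1},\overline{J_2}\subset I$ and all intervals are bounded dyadic, I can choose an open dyadic interval $I'$ with $\overline{I'}\subset I$ and $\overline{J_1}\cup\overline{J_2}\subset I'$; any element supported in $I'$ then automatically lies in $F'_I$ by \eqref{e:derivedsub}, so it suffices to produce the conjugating element inside $F_{I'}$.

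First I would recall the basic transitivity property of the dyadic PL group on the set of \emph{standard dyadic intervals}: for any two dyadic intervals $J_1,J_2$ whose closures lie in a common dyadic interval $I'$, there is a dyadic PL homeomorphism of $I'$, fixing the endpoints of $I'$ (hence extending by the identity to an element of $F_{I'}$), that carries $J_1$ onto $J_2$. This is the familiar statement that $F$ (equivalently $\Homeo$ by dyadic PL maps fixing the boundary of an interval) acts transitively on dyadic subintervals with closure inside the ambient interval; one builds such a map explicitly by a finite sequence of elementary subdivisions, matching the dyadic partition points to the left of $J_1$ with those to the left of $J_2$ and doing the same to the right, using that between any two dyadic intervals there is an affine-on-each-piece dyadic PL bijection of the complementary gaps. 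Taking $h$ to be this element viewed in $F_{I'}\subset F'_I$ gives $h(J_1)=J_2$.

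For the final conjugation identity, I would argue that if $h(J_1)=J_2$ then $hF_{J_1}h^{-1}=F_{J_2}$, which is immediate from the fact that $F_{J}$ is the group of dyadic PL homeomorphisms supported in $J$ and that conjugation by a dyadic PL homeomorphism sends the support of an element $g$ to its image under $h$; thus $hgh^{-1}$ is supported in $h(J_1)=J_2$ and is again dyadic PL, so $hF_{J_1}h^{-1}\subseteq F_{J_2}$, and the reverse inclusion follows by symmetry applying $h^{-1}$.

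The only genuinely nontrivial ingredient is the transitivity claim of the second paragraph, but this is entirely standard for Thompson's group $F$ and is precisely the kind of elementary combinatorial fact that the phrase ``well-known properties'' in the statement is invoking; so I expect no real obstacle beyond bookkeeping the dyadic endpoints. The mild care needed is to ensure $h$ can be taken with closed support inside $I$ (not merely inside $I$ as an open set), which is why I interpose the auxiliary interval $I'$ with $\overline{I'}\subset I$ and appeal to \eqref{e:derivedsub}; this guarantees $h\in F'_I$ rather than merely $h\in F_I$.
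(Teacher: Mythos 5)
Your proof is correct and follows essentially the same route as the paper, which simply cites Cannon--Floyd--Parry \cite[Lemma 4.2]{CFP} for the transitivity of $F$ on dyadic partitions; you spell out that standard argument and, helpfully, make explicit the small point the paper leaves implicit, namely interposing an interval $I'$ with $\overline{I'}\subset I$ and invoking \eqref{e:derivedsub} to ensure $h$ lands in $F'_I$ rather than merely $F_I$.
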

\begin{proof}
See Cannon, Floyd and Parry \cite[Lemma 4.2]{CFP}.
\end{proof}

\begin{lem}[(Fragmentation Lemma for $F$)]\label{l-coverF}
Let $J$ be an open dyadic interval covered by a family $ \{J_i\}_{i\in \mathcal{L}}$ of open dyadic intervals $J_i\subset J$. Then the group $F'_{J}$ is contained in the group generated by $\bigcup_{i\in \mathcal{L}} F'_{J_i}$.
\end{lem}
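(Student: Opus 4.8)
The plan is to reduce the statement for $F'_J$ to the case of a \emph{finite} subcover, and then to induct on the number of intervals in that subcover using the conjugacy statement of Lemma~\ref{l:conj_in_F}. First I would observe that by the description~\eqref{e:derivedsub}, every element $g\in F'_J$ has closed support contained in some open dyadic interval $K$ with $\overline{K}\subset J$. Since $\overline{K}$ is compact and the $J_i$ cover $J\supset \overline{K}$, finitely many of them, say $J_{i_1},\dots,J_{i_n}$, already cover $\overline{K}$. Thus it suffices to prove that $g$ lies in the group generated by $F'_{J_{i_1}},\dots,F'_{J_{i_n}}$; in other words, it is enough to prove the lemma for a finite cover $J=\bigcup_{k=1}^n J_{i_k}$ after replacing $J$ by a slightly larger dyadic interval still covered by the $J_{i_k}$ (or, equivalently, to prove that each such $g$ factors through the finite subfamily).

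The core is then the finite fragmentation step, which I would carry out by induction on $n$. The base case $n=1$ is trivial. For the inductive step, given $g\in F'_J$ with closed support in a compact set covered by $J_{i_1},\dots,J_{i_n}$, the idea is to write $g$ as a product $g=g_1 g_2$ where $g_1$ has support in (a dyadic interval with closure inside) $J_{i_1}\cup\cdots\cup J_{i_{n-1}}$ and $g_2$ has support inside $J_{i_n}$, each lying in the corresponding $F'$. Concretely, I would choose dyadic breakpoints separating the region covered by the first $n-1$ intervals from the region handled by $J_{i_n}$, and use that on the overlap one can cut $g$ along a dyadic point: the restriction of an element of $F'$ to a union of dyadic subintervals can be realised by an element of $F'$ supported in a dyadic interval straddling the cut, using Lemma~\ref{l:conj_in_F} to move intervals into standard position. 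The inductive hypothesis then disposes of $g_1$.

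The main obstacle I expect is the book-keeping in the cutting step: one must produce the factorisation with \emph{dyadic} breakpoints and with each factor genuinely supported inside one of the given $J_i$ (not merely inside $J$), so that the derived subgroups $F'_{J_i}$ — and not just $F_{J_i}$ — actually contain the pieces. This is where Lemma~\ref{l:conj_in_F} does the real work, allowing one to conjugate a given overlap configuration of two dyadic intervals into a standard one where the splitting of $F$ over a cut point is explicit. A clean way to organise this is to first reduce, by a conjugation in $F'_J$, to the situation where $J_{i_n}$ is a standard dyadic interval and the cut is at a single dyadic point, invoke the well-known fact that any element of $F'$ can be written as a product of one element supported left of a dyadic point and one supported right of it, and then conjugate back; the rest is an induction that I would state but not expand in full detail.
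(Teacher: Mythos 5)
Your reduction to a finite subcover of the closed support via \eqref{e:derivedsub} matches the paper's first step, but the inductive (cutting) step contains a genuine gap. The ``well-known fact'' you invoke --- that any element of $F'$ can be written as a product of one element supported to the left of a dyadic point $p$ and one supported to the right of it --- is false whenever the element moves $p$: two such factors commute and each fixes $p$, so their product fixes $p$. The corrected version, splitting over an \emph{overlap} rather than at a single point, still does not always produce the factorisation in the form you want. Concretely, suppose the support of $g$ is covered by $U=J_{i_1}\cup\dots\cup J_{i_{n-1}}=(a,d)$ and $V=J_{i_n}=(c,b)$ with $c<d$, and suppose $g(c)\ge d$ (e.g.\ $g$ pushes the whole overlap $(c,d)$ to the right of $d$). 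If $g=g_1g_2$ with $\mathrm{supp}(g_1)\subset U$ and $\mathrm{supp}(g_2)\subset V$, then $g_2$ fixes $c$ and $g_1$ preserves $(a,d)$, so $g(c)=g_1(g_2(c))=g_1(c)<d$, a contradiction. Hence for such $g$ no two-factor decomposition with your supports and in your order exists; one must either reverse the order of the factors according to the dynamics of $g$ on the overlap, or first multiply by an element of one of the $F'_{J_i}$ to bring $g(c)$ back into the overlap. Your sketch does not address this, and conjugating by Lemma~\ref{l:conj_in_F} cannot manufacture the needed fixed point, since $hgh^{-1}$ fixes $h(x)$ only when $g$ fixes $x$.

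For comparison, the paper avoids factorising elements altogether. Having extracted a \emph{minimal} finite subcover $J_1,\dots,J_n$ of $\overline I$ (with $\overline I\subset J$), ordered by left endpoints so that consecutive intervals overlap, it chooses $g\in F'_{J_1}$ --- hence already in the group $K$ generated by the $F'_{J_i}$ --- moving the left endpoint of $I$ into $J_2$; then $g(\overline I)$ is covered by $J_2,\dots,J_n$, so $F'_{g(I)}=gF'_Ig^{-1}\le K$ by induction, and therefore $F'_I\le K$. This conjugation trick is precisely the ingredient missing from your cutting step, and it is simpler to adopt it directly than to repair the element-by-element factorisation.
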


\begin{proof} Although this is well-known we prefer to include a proof here. 
	Let us denote by $K$ the group generated by $\bigcup_{i\in \mathcal{L}} F'_{J_i}$. By \eqref{e:derivedsub}, it is enough to show that $F'_I\le K$ for every open dyadic interval such that  $\overline{I}\subset J$. Fix such an $I$ and extract from $\mathcal L$ a finite subcover $J_1,\ldots, J_n$ of $\overline{I}$ of minimal cardinality. We argue by induction on the number  $n$. If $n=1$, we have $F_I\le F_{J_1}\le K$. Assume that $n\ge 2$, and let $a_i$ be the leftmost point of $J_i$, for $i=1,\ldots,n$. These are pairwise distinct:  if $J_i$ and $J_j$ share an endpoint, we would have $J_i\subset J_j$ or $J_j\subset J_i$, contradicting the minimality of $n$. Without loss of generality we can assume  that $a_1<a_2<\ldots <a_n$, and that $J_i\cap J_{i+1}\neq \emptyset$ for $i=1,\ldots, n$. Let $a$ be the leftmost point of $I$ and choose $g\in F'_{J_1}$ such that $g(a)\in J_2$ (this exists by Lemma \ref{l:conj_in_F}). Then $g(\overline{I})\subset J_2\cup\cdots \cup J_n$, so that $gF'_{I}g^{-1}=F'_{g(I)}\le K$ by the induction hypothesis. It follows that $F'_I\le K$.   
\end{proof}
\begin{dfn}\label{d-local-F}
Every dyadic chart $\pi_{C, J}$ defines an embedding of the group $F_J$ into $\Tsf(\varphi)$ obtained by letting $F_J$ act on $C\times J$ with trivial action on $C$, and transporting this action on $U_{C\times J}$. We denote the image of  this embedding by 
$F_{C, J}\le \Tsf(\varphi)$.
\end{dfn}

\begin{prop}\label{p-inf-gen}
The group $\Tsf(\varphi)$ is generated by the derived subgroups $F'_{C, J}$, defined by all dyadic charts $\pi_{C, J}$. \end{prop}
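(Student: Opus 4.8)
The plan is to prove that the subgroup $K:=\langle F'_{C,J}\rangle$ generated by all the $F'_{C,J}$ coincides with $\Tsf(\varphi)$ (the inclusion $K\subseteq \Tsf(\varphi)$ being trivial). I would split the argument into a \emph{local} step, showing that any element supported in a single dyadic domain already lies in $K$, and a \emph{global} fragmentation step, showing that every element of $\Tsf(\varphi)$ is a finite product of domain-supported elements.

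For the local step, suppose $g\in\Tsf(\varphi)$ has support contained in a single domain $U_{C\times I}$ (so $|I|<\tau_C$ by definition of a chart). Using the local normal form of Definition \ref{d-group} and compactness of $C$, one obtains a finite clopen partition $C=\bigsqcup_{i=1}^{k}C_i$ on which $g$ acts in the chart as $(x,t)\mapsto (x,f_i(t))$, with each $f_i\in F_I$ (the one-sided germs at $\partial I$ are trivial since $g$ is the identity outside $U_{C\times I}$). Writing $g=g_1\cdots g_k$ for the corresponding elements, which have pairwise disjoint supports $U_{C_i\times I}$, it is enough to treat each $g_i$. Choosing a dyadic interval $J$ with $\overline I\subset J$ and $|J|<\tau_C\le\tau_{C_i}$ — possible because $|I|<\tau_C$ and $\tau_{C_i}\ge\tau_C$ for $C_i\subseteq C$ — the identity \eqref{e:derivedsub} gives $f_i\in F_I\subseteq F'_J$, whence $g_i\in F'_{C_i,J}\subseteq K$. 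Thus $g\in K$.

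The global step is the heart of the matter. Given $g\in\Tsf(\varphi)$, for each $y\in Y^\varphi$ the definition provides a dyadic chart on which $g$ is a single map $(x,t)\mapsto(x,f(t))$; after shrinking we may assume the relevant domains (and their $g$-images) have length $<1$, and by compactness extract a finite subcover $U_1,\dots,U_m$ of $Y^\varphi$ adapted to $g$ in this sense. One must then fragment $g$ into a product of domain-supported elements. The subtlety is that $g$ need not fix any transversal, and on a non-periodic $\Phi$-orbit it acts as a homeomorphism of the line that may have no fixed point at all — for instance the flow translations $\Phi^{r}$, which lie in $\Tsf(\varphi)$ by Lemma \ref{l-minimal}. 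A naive ``two flow-box'' fragmentation fails here, since order-preservation on a single orbit appears to forbid realizing a net translation as a product of maps each having fixed points. The mechanism that resolves this is the mapping-torus gluing \eqref{e-chart}: a displacement by one full return is invisible on the base, because $[x,t+n]=[\varphi^{n}(x),t]$. Concretely, over a clopen set $C$ with large return time $\tau_C$ one realizes a flow translation on the slice $\{[x,s]:x\in C\}$ by a \emph{vertical shift inside the tall box} $U_{C\times J}$ (with $J$ long but still $|J|<\tau_C$): an element of $F'_{C,J}$ can carry the slice at height $s$ to the slice at height $s+n$, which by the gluing is the $\varphi^{n}$-image slice, i.e.\ agrees with $\Phi^{n}$ there. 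Using finitely many such tall boxes covering the transversal, together with the genuinely interior fragmentation supplied by the Fragmentation Lemma for $F$ (Lemma \ref{l-coverF}) and the conjugation Lemma \ref{l:conj_in_F}, one writes $g$ as a product of domain-supported pieces, each of which lies in $K$ by the local step. Combining the two steps yields $\Tsf(\varphi)\subseteq K$, completing the proof.

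I expect the global fragmentation to be the main obstacle. The difficulty is not the ``within a box'' part, which is handled by Lemma \ref{l-coverF}, but the organization of the global cover: one must choose domains that are simultaneously adapted to $g$ and tall enough, relative to the relevant return times, to absorb both the flow displacement and the $\Z$-monodromy, and then carefully account for the finitely many resulting pieces. This is precisely the point at which the compactness of $Y^\varphi$ and the suspension-flow structure, rather than the mere action on a line, become indispensable.
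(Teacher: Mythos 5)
Your local step is sound and essentially reproduces Lemma \ref{l-compactness-prod}: an element with support in a single dyadic domain splits as a commuting product over a clopen partition of the base, and the trick of enlarging $I$ to a dyadic $J$ with $\overline{I}\subset J$ and $|J|<\tau_C\le\tau_{C_i}$ correctly places each factor in $F'_{C_i,J}$ via \eqref{e:derivedsub}. The gap is in the global step, which you yourself flag as ``the heart of the matter'' but then only sketch. The sentence ``using finitely many such tall boxes covering the transversal \dots one writes $g$ as a product of domain-supported pieces'' is exactly the assertion that needs proof, and the mechanism you propose for it does not exist in general: a tall box $U_{C\times J}$ requires a clopen $C$ with $\tau_C>|J|$, but if $X$ contains a periodic point of period $p$ then every clopen neighbourhood $C$ of that point has $\tau_C\le p$, so no box of height $\ge p$ meets that orbit. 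In the extreme case where $X$ is a single fixed point one has $Y^\varphi=\Sbb^1$ and $\Tsf(\varphi)=T$, every chart has $|J|<1$, and there are no tall boxes at all --- yet the dyadic rotations must still be written as products of arc-supported elements. So the monodromy-absorption device cannot be the general argument; and even for aperiodic systems, where tall boxes do exist, you have not explained how the finitely many corrections are organised so that the residual element becomes domain-supported.

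For comparison, the paper avoids this difficulty by a soft argument: it equips $\Hsf_0(\varphi)$ with a complete group metric built from displacement functions, shows that $\Tsf(\varphi)$ is dense and that $\Hsf_0(\varphi)$ is connected, and thereby reduces to elements $g$ with $d(g,\id)<1/4$. Such a $g$ displaces the transversal $X_0$ (the image of $X\times\{0\}$) by less than $1/4$, so it can be corrected near $X_0$ by elements supported in $U_{C_i\times(-1/4,1/4)}$, leaving an element supported in $U_{X\times J}$ for some $J\subset(0,1)$, to which your local step (Lemma \ref{l-compactness-prod}) applies. In the minimal case there is also the alternative route of Remark \ref{r-p-inf-gen-minimal}, via the First Return Decomposition (Lemma \ref{l-stab-prod}) and the fact that two germ-stabilisers generate (Lemma \ref{l-stab-generate}). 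To salvage your approach you would need a uniform substitute for the tall-box device --- in effect, a way of writing an arbitrary element as a product of elements of small displacement, which is precisely what the paper's connectedness-plus-density argument supplies.
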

\begin{rem}\label{r-inf-gen}
We postpone the proof of Proposition \ref{p-inf-gen} to the appendix (Appendix \ref{s-polish}), since it becomes cleaner after introducing a suitable  topology on the group $\Hsf_0(\varphi)$.  In the relevant special case where $(X, \varphi)$  is minimal, a more direct proof will appear in Section \ref{s-circle} (see Remark \ref{r-p-inf-gen-minimal}).  
We point out however that Proposition \ref{p-inf-gen} is orthogonal to all other results in this paper:  the reader  may as well take as definition that the group $\Tsf(\varphi)$ is the subgroup of $\Hsf_0(\varphi)$ generated by the subgroups $F_{C, J}\le \Hsf_0(\varphi)$, and continue reading. We decided to include it for completeness, since we believe that Definition \ref{d-group} is more natural.
  \end{rem}

We thank the referee for pointing out that, at this stage, Proposition \ref{p-inf-gen} already yields the following, which can be seen as a weak form of simplicity.

\begin{cor}\label{c-perfect}
	The group $\Tsf(\varphi)$ is perfect and admits no non-trivial finite quotient.
\end{cor}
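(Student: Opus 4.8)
The plan is to deduce both assertions directly from the generation result of Proposition~\ref{p-inf-gen}, which presents $\Tsf(\varphi)$ as generated by the derived subgroups $F'_{C, J}$ ranging over all dyadic charts $\pi_{C, J}$.

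For perfectness, I would note that each generating subgroup $F'_{C, J}$ is, by its very definition, the derived subgroup of $F_{C, J}\le \Tsf(\varphi)$. Hence every element of $F'_{C, J}$ is a product of commutators of elements of $\Tsf(\varphi)$, so $F'_{C, J}\le \Tsf(\varphi)'$. Since these subgroups generate the whole group, $\Tsf(\varphi)=\langle F'_{C, J}\rangle \le \Tsf(\varphi)'$, and as the reverse inclusion is automatic we conclude $\Tsf(\varphi)=\Tsf(\varphi)'$.

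For the absence of non-trivial finite quotients, I would show that every generator must lie in the kernel of any homomorphism to a finite group. Let $\pi\colon \Tsf(\varphi)\to Q$ be such a homomorphism with $Q$ finite. Via the embedding of Definition~\ref{d-local-F}, the subgroup $F'_{C, J}$ is isomorphic to the derived subgroup $F'$ of Thompson's group $F$, which is infinite and simple. The kernel of the restriction $\pi|_{F'_{C, J}}$ is then a normal subgroup of a simple group; since $Q$ is finite and $F'_{C, J}$ is infinite, the restriction cannot be injective, forcing its kernel to be all of $F'_{C, J}$, i.e.\ $\pi(F'_{C, J})=1$. As the subgroups $F'_{C, J}$ generate $\Tsf(\varphi)$, the map $\pi$ is trivial, so $Q$ is trivial.

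I do not expect any genuine obstacle: all the substance is carried by Proposition~\ref{p-inf-gen}. The only points requiring care are the two ingredients about Thompson's group that make the second argument run, namely that $F'$ is simple (so that its image under $\pi$ is forced to be either trivial or faithful) and that it is infinite (so that faithfulness into a finite $Q$ is impossible); both are standard and are recorded in the discussion around \eqref{e:derivedsub}. It is also worth keeping in mind that these are two logically independent statements: perfectness rules out non-trivial \emph{abelian} quotients but not finite ones in general, so the second argument is genuinely needed and does not follow formally from the first.
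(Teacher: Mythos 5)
Your proposal is correct and follows essentially the same route as the paper: both deduce perfectness immediately from Proposition~\ref{p-inf-gen}, and both rule out non-trivial finite quotients by using that each generating subgroup $F'_{C,J}\cong F'$ is infinite and simple, so it must either inject into the quotient (impossible if finite) or die entirely (impossible for all charts, since they generate). The paper phrases the second point via an arbitrary proper normal subgroup $N$ and concludes $\Tsf(\varphi)/N$ is infinite, but this is just the contrapositive of your formulation.
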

\begin{proof}
	After Proposition \ref{p-inf-gen}, the group $\Tsf(\varphi)$ is generated by commutators, and thus is perfect.	
	Let $N\lhd \Tsf(\varphi)$ be a proper normal subgroup.
	After Proposition \ref{p-inf-gen}, there exists a dyadic chart $\pi_{C, J}$ such that $(\Tsf(\varphi)\setminus N)\cap F'_{C,J}\neq\emptyset$. As $F'_{C,J}\cong F'$ is simple, we must have $F'_{C,J}\cap N=\{\id\}$. This implies that the quotient $\Tsf(\varphi)/N$ is infinite.
\end{proof}

We conclude this section with  two lemmas on the subgroups $F'_{C, J}$, that will be repeatedly used later. 
\begin{lem}[(Intersection Lemma)]\label{l-intersection0} \label{l-intersection}
Let $\pi_{C, I}$ and $\pi_{D, J}$ be dyadic charts such that $C\cap D\neq \emptyset$ and ${I}\cap {J}\neq \emptyset$. Then the group $\langle F'_{C, I}, F'_{D, J} \rangle$ contains the subgroups $F'_{C\cap D, I\cap J}$, $F'_{C\cap D, I}$ and $F'_{C\setminus D,I}$.
\end{lem}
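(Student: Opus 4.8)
The plan is to reduce everything to standard manipulations inside copies of Thompson's group $F$, exploiting that each $F_{C,I}$ acts ``diagonally'' on its chart: trivially on the Stone factor, and by a single flow-direction homeomorphism $f\in F_I$ in the interval direction. Throughout write $G=\langle F'_{C,I}, F'_{D,J}\rangle$. The key preliminary reduction is to pass to a small interval: since $I\cap J$ is a non-empty open dyadic interval, fix a dyadic interval $K_0$ with $\overline{K_0}\subset I\cap J$ and $|K_0|<1$. By \eqref{e:derivedsub} this gives $F'_{C,K_0}\le F'_{C,I}$ and $F'_{D,K_0}\le F'_{D,J}$, so both are contained in $G$; the charts involved are all legitimate because any clopen subset of $C$ (such as $C\cap D$, $C\setminus D$) has first-return time $\ge\tau_C>|I|$.

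First I would produce $F'_{C\cap D, K_0}$. The point of taking $|K_0|<1$ is that the two domains then overlap only in the obvious way: using the identification $[x,t]=[\varphi^n(x),t-n]$, a point of $U_{C\times K_0}$ lies in $U_{D\times K_0}$ exactly when $t-n\in K_0$ for some $n\in\Z$ with $\varphi^n(x)\in D$, and $|K_0|<1$ forces $n=0$, so the overlap is exactly $U_{(C\cap D)\times K_0}$. Consequently, for $g\in F'_{C,K_0}$ and $h\in F'_{D,K_0}$ with flow-parts $f,f'\in F'_{K_0}$, a direct check shows the commutator $[g,h]$ is supported on $U_{(C\cap D)\times K_0}$ and, under the canonical isomorphism $F_{C\cap D,K_0}\cong F_{K_0}$, has flow-part $[f,f']$: indeed $g$ and $h$ act identically (one of them trivially) on the fibre over $C\setminus D$ and over $D\setminus C$, so the commutator dies there. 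Since $F'_{K_0}$ is simple and non-abelian, hence perfect, the elements $[f,f']$ generate $F'_{K_0}$, whence the $[g,h]$ generate $F'_{C\cap D,K_0}\le G$.

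Next I would enlarge the interval from $K_0$ to $I$. Conjugating $F'_{C\cap D,K_0}$ by an element $\eta\in F'_{C,I}$ with flow-part $e\in F'_I$ fixes the Stone factor and transports the flow-part by $e$, giving $\eta F'_{C\cap D,K_0}\eta^{-1}=F'_{C\cap D,e(K_0)}$. By Lemma \ref{l:conj_in_F}, $K_0$ can be sent by such an $e$ onto any dyadic $K'$ with $\overline{K'}\subset I$; covering $I$ by such intervals and applying the Fragmentation Lemma \ref{l-coverF} through the isomorphism $F_{C\cap D,I}\cong F_I$ yields $F'_{C\cap D,I}\le G$. Since $I\cap J\subseteq I$, \eqref{e:derivedsub} then gives $F'_{C\cap D,I\cap J}\le F'_{C\cap D,I}\le G$, settling the first two required subgroups.

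Finally, to obtain $F'_{C\setminus D,I}$ I would use a cancellation trick between the two copies over the full interval $I$: given $f\in F'_I$, pick $g\in F'_{C,I}$ with flow-part $f$ and $g'\in F'_{C\cap D,I}$ with flow-part $f^{-1}$ (possible since both are full copies of $F'_I$ and $F'_{C\cap D,I}\le G$ by the previous step). Then $gg'$ is the identity on the fibre over $C\cap D$ and acts by $f$ on the fibre over $C\setminus D$, so $gg'\in F'_{C\setminus D,I}$ with flow-part $f$; letting $f$ range over $F'_I$ gives $F'_{C\setminus D,I}\le G$. The main obstacle, and the only genuinely delicate point, is the bookkeeping of supports inside the suspension $Y^\varphi$: one must ensure that the $\Z$-identifications do not create spurious overlaps between the relevant domains, which is precisely why the reduction to $|K_0|<1$ in the first step is essential. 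Once the supports are under control, all three assertions follow formally from perfectness of $F'$, Lemma \ref{l:conj_in_F}, and the Fragmentation Lemma.
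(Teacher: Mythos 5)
Your proposal is correct and follows essentially the same three-step strategy as the paper's proof: commutators of elements with the same flow-direction part produce $F'_{C\cap D,\,\cdot}$ over a common interval, conjugation via Lemma~\ref{l:conj_in_F} together with \eqref{e:derivedsub} enlarges the interval to all of $I$, and the cancellation $g=g_1g_2^{-1}$ handles $F'_{C\setminus D, I}$. Your preliminary reduction to an interval $K_0$ with $|K_0|<1$ is a welcome extra precaution (the paper performs the commutator step directly on $I\cap J$, whose length is only bounded by the return times, so your bookkeeping of spurious $\Z$-overlaps is if anything more careful than the original), but it does not change the substance of the argument.
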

\begin{proof}
Let us first show that $F'_{C\cap D, I\cap J}\le \langle F'_{C, I}, F'_{D, J} \rangle$. Observe that for every dyadic intervals $I$ and $J\subset\R$, one has $F'_I\ge F'_{I\cap J}$. Thus
the group $F'_{C, I}$ contains $F'_{C, I\cap J}$ as subgroup, and similarly $F'_{D, J}$ contains $F'_{D, I\cap J}$. 
Therefore, it suffices to prove that $F'_{C\cap D, I\cap J}$ is contained in the group generated by $F'_{C, I\cap J}$ and $F'_{D, I\cap J}$. To simplify notations, we will assume $I=J=I\cap J$. Take $f\in F'_{C, I}$ and $g\in F'_{D, I}$, defined respectively by $f_0$  and $g_0\in F'_I$. Then the commutator $[f,g]$ is supported on $U_{(C\cap D)\times I}$, where it is defined on the flow direction by the commutator $[f_0,g_0]\in F'_I$. As commutators generate $[F'_I,F'_I]=F'_I$, we have the statement.

Let us now show that $F'_{C\cap D, I} \le  \langle F'_{C, I}, F'_{D, J} \rangle$. We already know that $\langle F'_{C, I}, F'_{D, J} \rangle$ contains $F'_{C\cap D, I\cap J}$.  Fix an open dyadic interval $L$ such that $\overline{L}\subset {I}$. By Lemma \ref{l:conj_in_F} we can find $h\in F'_I$ such that $h(L)\subset I\cap J$. Let $g\in F_{C, I}$ be the element  given in coordinates by $\id \times h$. Then $F'_{C\cap D,L}=g^{-1}F'_{C\cap D, h(L)}g$ is also contained in $\langle F'_{C, I}, F'_{D, J} \rangle$. Since $L$ was arbitrary, by \eqref{e:derivedsub} we have that $F'_{C\cap D, I}\le \langle F'_{C, I}, F'_{D, J} \rangle$.

Finally, we show that $F'_{C\setminus D,I} \le \langle F'_{C, I}, F'_{D, J} \rangle$. Take  an element $g\in F'_{C\setminus D, I}$, defined in the flow direction by a  dyadic PL  homeomorphism $f:I\to I$. Take the elements $g_1\in F'_{C,I}$ and $g_2\in F'_{C\cap D,I}$, defined by the same $f$. Then we have the equality $g=g_1g_2^{-1}$. As $g\in F'_{C\setminus D, I}$ was arbitrary, this proves that $F'_{C\setminus D, I}$  is also a  subgroup of $\langle F'_{C, I}, F'_{D, J} \rangle$.
\end{proof}

\begin{lem}[(Fragmentation Lemma)] \label{l-cover}
Assume that $C\times J$ is covered by a family $\{C_i\times J_i\}_{i\in \mathcal{L}}$, for clopen subsets $C_i\subset C$ and open dyadic intervals $J_i\subset J$. Then the group $F'_{C, J}$ is contained in the group generated by $\bigcup_{i\in \mathcal{L}} F'_{C_i, J_i}$.
\end{lem}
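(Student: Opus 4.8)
\emph{Strategy and first reduction.} The plan is to split the two-dimensional fragmentation into the two one-dimensional mechanisms already at our disposal: the Fragmentation Lemma for $F$ (Lemma~\ref{l-coverF}) in the flow direction, and the Intersection Lemma (Lemma~\ref{l-intersection}) in the transverse Stone direction, the latter being exactly what allows one to localise the clopen base of a chart. Write $K:=\langle\,\bigcup_{i\in\mathcal L}F'_{C_i,J_i}\,\rangle$. By \eqref{e:derivedsub} it suffices to prove $F'_{C,I}\le K$ for every open dyadic interval $I$ with $\overline I\subset J$. For such an $I$ the set $C\times\overline I$ is compact, so I extract from $\{C_i\times J_i\}$ a finite subcover.

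\emph{Decoupling the two directions.} The Intersection Lemma requires the two charts to have overlapping flow intervals; to make this hypothesis free I first fix a common interval and only then touch the base. Using a tube-lemma/compactness argument --- valid even when $X$ is non-metrisable, since clopen boxes form a basis of the compact Hausdorff space $Y^\varphi$ --- I produce finitely many open dyadic intervals $I_1,\dots,I_m\subset I$ covering $I$, fine enough that for each $l$ the family $\{C_i:\overline{I_l}\subset J_i\}$ still covers $C$. Fix $l$ and set $\mathcal A_l=\{i:\overline{I_l}\subset J_i\}$. Since the base is unchanged, $\overline{I_l}\subset J_i$ gives $F'_{C_i,I_l}\le F'_{C_i,J_i}\le K$ through \eqref{e:derivedsub}, while $\{C_i\}_{i\in\mathcal A_l}$ covers $C$.

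\emph{Base fragmentation and conclusion.} All the groups $F'_{C_i,I_l}$ with $i\in\mathcal A_l$ now share the interval $I_l$, so any two of them satisfy the overlap hypothesis of the Intersection Lemma automatically. Applying it repeatedly --- intersecting to produce $F'_{C_i\cap C_j,I_l}$ and subtracting to produce $F'_{C_i\setminus C_j,I_l}$ (empty intersections being trivial) --- I obtain $F'_{B,I_l}\le K$ for every clopen $B$ in the finite Boolean algebra generated by $\{C_i\}_{i\in\mathcal A_l}$, in particular for each atom $D_k$ of the resulting clopen partition $C=\bigsqcup_k D_k$. An element of $F'_{C,I_l}$, being given by some $f\in F'_{I_l}$ acting uniformly over $C$, is the commuting product over $k$ of the elements of $F'_{D_k,I_l}$ defined by the same $f$; hence $F'_{C,I_l}\le K$. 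As this holds for every $l$ and $\{I_l\}$ covers $I$ by open dyadic subintervals, the one-dimensional Fragmentation Lemma (Lemma~\ref{l-coverF}), applied to the copy of $F$ acting in the flow direction over the fixed base $C$, yields $F'_{C,I}\le\langle F'_{C,I_l}\rangle\le K$. Together with the first reduction this gives $F'_{C,J}\le K$.

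\emph{Main obstacle.} The crux --- and the reason a single chart does not suffice --- is that $F'_{C_i,J_i}$ acts uniformly over its whole base $C_i$, so one cannot extract from it alone an element supported over a proper clopen $D\subsetneq C_i$; localising the base genuinely requires combining overlapping charts via the Intersection Lemma. This is precisely what dictates the ``fix a common interval $I_l$ first'' ordering, since only then is the interval-overlap hypothesis of that lemma automatic. The remaining delicate point is the compactness bookkeeping that produces a single subdivision $\{I_l\}$ working uniformly over all of $C$ without assuming $X$ metrisable, which I handle by the tube-lemma argument noted above.
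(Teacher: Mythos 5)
Your proof is correct and follows essentially the same route as the paper's: both decouple the problem by first fixing, via compactness, small common intervals on which the Intersection Lemma fragments the clopen base into a disjoint refinement, and then reassemble along the flow direction using the one-dimensional Fragmentation Lemma (Lemma~\ref{l-coverF}). The only differences are cosmetic (the paper phrases the interval subdivision as a pointwise claim for each $t\in J$ with $I=\bigcap_{j\in\mathcal F}J_j$, and treats the constant-interval case as a separate first step).
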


\begin{proof}
Let us denote by $H\le \Tsf(\varphi)$ the subgroup generated by  $\bigcup_{i\in \mathcal{L}} F'_{C_i, J_i}$, and let us show that $H=F'_{C, J}$. Assume first that $J_i=J$ for every $i$. By compactness we can find a finite cover $C_1,\ldots, C_r$ of $C$. Taking finitely many intersections and symmetric differences and using the Intersection Lemma (Lemma \ref{l-intersection}) we can replace it by a refinement $D_1,\ldots, D_s$ consisting of pairwise disjoint clopen subsets such that $F_{D_i, J}\le H$ for every $i$. Every element of $F'_{C,J}$ is a commuting product of elements in $F'_{D_i, J}$, whence the conclusion. Assume next that the intervals $J_i$ are arbitrary.
	\begin{claim}
	For any $t\in J$ there exists a subinterval $I\subset J$ such that $t\in I$ and  $F'_{C,I}\le H$.
	\end{claim}
	\begin{proof}[Proof of Claim]
	Using compactness, we can extract a finite subfamily $\mathcal F\subset \mathcal L$, such that $C\times \{t\}\subset \bigcup_{j\in\mathcal F}C_j\times J_j$ and such that $t\in J_j$ for every $j\in\mathcal F$. Let $I=\bigcap_{j\in\mathcal F} J_j$. Since $F_{C_j, I}\le F_{C_j, J_j}$, the previous case (applied to $J=I$) implies that $F'_{C, I}\le H$. 
	\end{proof}

By Lemma \ref{l-coverF}, we deduce that $F'_{C, J}\le H$.
\end{proof}

\section{Simplicity} \label{s-simplicity}

Theorem \ref{t-simple} will be an application of Proposition \ref{p-simplicity} (we refer to Section \ref{ss-simple} for the definition of fragmentable). 
\begin{prop}\label{p-fragmentable}
	Let $(X,\varphi)$ be a Stone system. Then the group $\Tsf(\varphi)$, as group of homeomorphisms of the suspension $Y^\varphi$, is fragmentable.
\end{prop}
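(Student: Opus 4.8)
The plan is to reduce the statement to the Fragmentation Lemma (Lemma~\ref{l-cover}) together with the fact that $\Tsf(\varphi)$ is generated by the subgroups $F'_{C, J}$ (Proposition~\ref{p-inf-gen}). Fix an open cover $\mathcal{U}$ of $Y^\varphi$ and set $K = \langle \rist(U)' : U \in \mathcal{U}\rangle$. Since each $\rist(U)'$ is a subgroup of $\Tsf(\varphi)$, we trivially have $K \le \Tsf(\varphi)$, so the content is the reverse inclusion $\Tsf(\varphi) \le K$. By Proposition~\ref{p-inf-gen} it suffices to prove that $F'_{C, J} \le K$ for every dyadic chart $\pi_{C, J}$.

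Next I would carry out a geometric refinement of the abstract cover $\mathcal{U}$ into a cover of the box $C \times J$ by boxes to which Lemma~\ref{l-cover} applies. Recall that the domains $U_{C' \times J'}$ form a basis of the topology of $Y^\varphi$, and that, reading inside the injective chart $\pi_{C, J}$, the subdomains $U_{C' \times J'}$ with $C' \subseteq C$ clopen and $J' \subseteq J$ open dyadic form a neighbourhood basis at each point $\pi_{C,J}(x', s')$. Thus, for every $(x', s') \in C \times J$, writing $y = \pi_{C,J}(x', s')$ and choosing $U \in \mathcal{U}$ with $y \in U$, I can pick a clopen $C_{(x',s')} \ni x'$ inside $C$ and an open dyadic interval $J_{(x',s')} \ni s'$ inside $J$, small enough that $U_{C_{(x',s')} \times J_{(x',s')}} \subseteq U \cap U_{C \times J}$. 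The family $\{C_{(x',s')} \times J_{(x',s')}\}$ then covers $C \times J$, each member having its domain contained in some element of $\mathcal{U}$.

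Now I would invoke the Fragmentation Lemma: since $C \times J$ is covered by this family of boxes, Lemma~\ref{l-cover} shows that $F'_{C,J}$ is contained in the group generated by the subgroups $F'_{C_{(x',s')}, J_{(x',s')}}$. For each index, the subgroup $F_{C_{(x',s')}, J_{(x',s')}}$ consists of homeomorphisms fixing everything outside $U_{C_{(x',s')} \times J_{(x',s')}} \subseteq U$, so $F_{C_{(x',s')}, J_{(x',s')}} \le \rist(U)$ and hence $F'_{C_{(x',s')}, J_{(x',s')}} \le \rist(U)' \le K$. Combining these inclusions yields $F'_{C, J} \le K$, and letting the chart $\pi_{C, J}$ vary gives $\Tsf(\varphi) \le K$ by Proposition~\ref{p-inf-gen}, hence equality. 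As $\mathcal{U}$ was an arbitrary open cover, this is exactly fragmentability.

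The only genuine obstacle is the refinement step, and it is mild: the crux is that subdomains respecting the chart coordinates (with $C' \subseteq C$ and $J' \subseteq J$) are cofinal among neighbourhoods, so that an arbitrary open cover can be converted into a box cover of $C \times J$ compatible with the hypotheses of Lemma~\ref{l-cover}. All the algebraic labour has already been absorbed into Proposition~\ref{p-inf-gen} and the Fragmentation Lemma, so once the covering is in place the argument is pure bookkeeping.
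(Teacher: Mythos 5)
Your proof is correct and follows essentially the same route as the paper's: reduce to showing $F'_{C,J}\le K$ via Proposition~\ref{p-inf-gen}, refine the cover $\mathcal{U}$ into dyadic boxes using that dyadic domains form a basis of the topology, and conclude with the Fragmentation Lemma (Lemma~\ref{l-cover}). Your explicit remark that $F_{C_i,J_i}\le\rist(U)$ and hence $F'_{C_i,J_i}\le\rist(U)'$ is if anything slightly more careful than the paper's phrasing.
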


\begin{proof}
	Let $\mathcal U$ be an open cover of $Y^\varphi$. Let $K$ be the subgroup of $\Tsf(\varphi)$ generated by $\mathsf{RiSt}(U)'$, for $U\in \mathcal{U}$, and let us show that $K=\Tsf(\varphi)$. By Proposition \ref{p-inf-gen}, it is enough to show that $F'_{C, I}\le K$ for every dyadic chart $\pi_{C, I}$. Since $U_{C\times I}$ is covered by $\mathcal{U}$ and dyadic domains form a basis of the topology, we can find a cover $C\times I=\bigcup_{i\in \Lambda} C_i \times I_i$, with $C_i\subset C$ and $I_i\subset I$, such that each dyadic domain $U_{C_i \times I_i}$ is contained in some element  $U_i\in \mathcal{U}$. Note that $F_{C_i, I_i}\le \rist(U_i)'$. By the Fragmentation Lemma (Lemma \ref{l-cover}), we have $F'_{C, I}\le \langle F_{  I_i, C_i}\rangle_{i\in \Lambda} \le \langle \rist(U_i)'\rangle_{i\in \Lambda}\le K$, as desired. 
\end{proof}

\begin{proof}[Proof of Theorem \ref{t-simple}]
Suppose that $(X,\varphi)$ is minimal. By Lemma \ref{l-minimal}, the action of $\Tsf(\varphi)$ on $Y^\varphi$ is minimal, and after Proposition~\ref{p-fragmentable}, the group $\Tsf(\varphi)$ is fragmentable. Hence we can apply Proposition \ref{p-simplicity} and obtain that $\Tsf(\varphi)$ is simple.
\end{proof}

\section{Finite generation}\label{s-fg}

In this section we prove Theorem \ref{t-fg}.
We first recall a  classical characterisation of subshifts in terms of existence of generating partitions. Let $(X, \varphi)$ be a Stone system.  A \emph{generating partition} for $(X, \varphi)$ is a clopen partition $X=C_1\sqcup \cdots \sqcup C_d$ such that the sets $\varphi^n(C_i)$, for  $n\in \Z$, and $i=1,\ldots, d$, separate points of $X$, or equivalently  generate the boolean algebra of clopen subsets of $X$. It is well-known that a generating partition exists if and only if $(X, \varphi)$ is conjugate to a subshift. Indeed, if $X=C_1\sqcup \cdots \sqcup C_d$ is generating, the map $X\to \{1,\ldots, d\}^\Z$ sending $x\in X$ to the unique sequence $(i_n)\in \{1,\ldots, d\}^\Z $ such that $\varphi^n(x)\in C_{i_n}$ provides such a conjugation. Conversely if $X\subset \{1,\ldots , d\}^\Z$ is a subshift, the subsets $C_i=\{(x_n)_{n\in\Z}\in X \colon  x_0=i\}$ provide a generating partition for $(X, \varphi)$.

In what follows we let $(X, \varphi)$ be a Stone system conjugate to a subshift.  Fix the dyadic intervals $I_0=(-1/4, 1/2)$ and $I_1=(1/4, 1+1/8)$. The only relevant properties of these two dyadic intervals will be that $|I_0|, |I_1|<1$, they intersect non-trivially and their union is a neighbourhood of  $[0, 1]$.

\begin{lem}\label{l-moving-intervals}Let $(X, \varphi)$ be a Stone system.
Let $K\le \Tsf(\varphi)$ be the group generated by $F_{X, I_0}$ and $F_{X, I_1}$. Let $C\subset X$ be a clopen subset and $I,J$ be open dyadic intervals of length $|I|,|J|<1$. Then there exists an element $k\in K$ such that $k(U_{C\times J})=U_{C\times I}$. 
\end{lem}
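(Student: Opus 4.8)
The plan is to reduce the statement to a purely one-dimensional problem about $\Z$-equivariant dyadic $\PL$ homeomorphisms of $\R$, and then to establish a transitivity property for the group they generate.

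\emph{Reduction to the flow direction.} First I would observe that every element of $K$ acts fibrewise. Lifting to $X\times\R$, each element of the generating subgroups $F_{X,I_0}$ and $F_{X,I_1}$ acts by $(x,t)\mapsto(x,\tilde f(t))$, where $\tilde f$ is the $\Z$-equivariant extension to $\R$ of the corresponding element $f\in F_{I_j}$; this extension is well defined and independent of $x$ precisely because $|I_j|<1$, so the translates $I_j+n$ are pairwise disjoint and $f$ is simply copied on each of them. Consequently every $k\in K$ satisfies $k[x,t]=[x,\tilde k(t)]$ for a single $\Z$-equivariant dyadic $\PL$ homeomorphism $\tilde k$ of $\R$, and
\[
k(U_{C\times J})=\{[x,\tilde k(t)]\colon x\in C,\ t\in J\},
\]
which equals $U_{C\times\tilde k(J)}$ whenever $|\tilde k(J)|<\tau_C$. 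Since $|I|,|J|<1\le\tau_C$, it therefore suffices to produce $k\in K$ with $\tilde k(J)=I$; this is now a statement about $\R$ only, independent of $(X,\varphi)$. Write $\widetilde K\le\Homeo_+(\R)$ for the group of all such $\tilde k$, generated by the $\Z$-equivariant extensions $\widetilde F_{I_0}$ and $\widetilde F_{I_1}$ of $F_{I_0}$ and $F_{I_1}$.

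\emph{Reductions and two elementary ingredients.} Fix once and for all a dyadic interval $J_*$ with $\overline{J_*}\subset I_0\cap I_1$. Since $\widetilde K$ is a group, it is enough to find, for every dyadic interval $J$ with $|J|<1$, an element of $\widetilde K$ mapping $J$ to $J_*$: applying this to both $I$ and $J$ and composing yields $\tilde k$ with $\tilde k(J)=I$. Two ingredients are elementary. The first is \emph{local transitivity}: if $\overline{J}\subset I_0+n$ for some $n\in\Z$, then applying Lemma~\ref{l:conj_in_F} inside $I_0+n$ (where $\widetilde F_{I_0}$ restricts to a full copy of Thompson's group $F$) gives an element of $\widetilde F_{I_0}$ taking $J$ to $J_*+n$, and similarly with $I_1$. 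The second is a \emph{unit shift}: I would construct $\tilde u\in\widetilde K$ with $\tilde u(J_*)=J_*+1$ by travelling through the overlap $I_1\cap(I_0+1)\neq\emptyset$, namely by first pushing $J_*$ within $I_1$ onto an interval whose closure lies in $I_1\cap(I_0+1)$, and then pushing that interval within $I_0+1$ onto $J_*+1$, each step being furnished by Lemma~\ref{l:conj_in_F}. As every element of $\widetilde K$ is $\Z$-equivariant, $\tilde u^{\,n}(J_*)=J_*+n$ for all $n$, so every $J_*+n$ can be sent back to $J_*$.

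\emph{The compression step and the main obstacle.} It remains to bring an arbitrary dyadic $J$ with $|J|<1$ into some translate $I_0+n$ (with closure), after which local transitivity and the shift conclude the proof by sending $J\mapsto J_*+n\mapsto J_*$. The translates $\{I_0+n,\ I_1+n\}_{n\in\Z}$ cover $\R$, and every two consecutive ones overlap; I would argue by induction on the number $N(J)$ of them meeting $\overline{J}$. When $N(J)=1$ we are essentially done, and when $N(J)\ge 2$ the idea is to pull the rightmost part of $J$ leftwards across the rightmost overlap, using an element of $\widetilde F_{I_0}$ or $\widetilde F_{I_1}$ that pushes towards $-\infty$; because $|J|<1$ the closure $\overline{J}$ spans at most one full period, which controls how the support of this element meets $\overline{J}$ and guarantees that the move strictly decreases $N(J)$ while creating no new intersection on the left. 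I expect this compression step to be the main obstacle: the delicate case is when $\overline{J}$ spans a full period, so that $\Z$-equivariance forces the chosen element to act on both endpoints of $J$ simultaneously, and the resolution is that pushing consistently in one direction settles this conflict. Assembling the three steps produces $\tilde k\in\widetilde K$ with $\tilde k(J)=J_*$, which completes the reduction and hence the lemma.
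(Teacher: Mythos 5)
Your reduction to a one\nobreakdash-dimensional statement about $\Z$-equivariant dyadic $\PL$ homeomorphisms of $\R$ is correct, and it is in substance the same observation that drives the paper's proof, where it appears as the identity $F_{X,I_i}=F_{X,I_i-n}$ coming from the domain relations \eqref{e-chart}; your remark that only the \emph{final} composite needs to satisfy $|\tilde k(J)|<\tau_C$ is also the right way to dispose of the intermediate intervals. The local transitivity step and the unit shift $\tilde u$ are both fine. Where you diverge from the paper is the architecture: the paper runs a single induction on the length of a chain of integer translates of $I_0,I_1$ covering the smallest interval containing $I\cup J$, peeling one translate off at a time, whereas you route everything through a fixed reference interval $J_*$. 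That is a legitimate alternative, but it stands or falls with your compression step.

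That step is where the genuine gap lies, exactly where you anticipated it, and the resolution you propose does not close it. Write $(A_k)_{k\in\Z}$ for the translates $I_0+n$, $I_1+n$ ordered left to right, and let $A_q$ be the rightmost one meeting $\overline{J}$, so $b:=\sup J\in A_q$. First, no single move of the kind you describe can strictly decrease $N(J)$ unless $b$ already lies in the overlap $A_{q-1}\cap A_q$: an element of $\widetilde{F}_{I_j}$ whose support family contains $A_q$ maps $A_q$ to itself, so it returns $b$ to $A_q$ and the image still meets $A_q$; an element of the other family moves $b$ at all only when $b\in A_{q-1}$. So the induction needs a two-move step (bring $b$ into the overlap with one family, then expel it from $A_q$ with the other) and a finer invariant than $N(J)$. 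Second, and more seriously, ``pushing consistently towards $-\infty$'' does not prevent a new intersection from appearing on the left. Take $J=(0,1/2)$, so $\overline{J}$ meets $I_1-1$, $I_0$, $I_1$ and $N(J)=3$ with $A_q=I_1$: the periodization $\tilde f$ of $f\in F_{I_1}$ also acts on $\overline{J}\cap(I_1-1)=[0,1/8)$, and with $f\le\id$, $f(1/2)=3/8$, $f(1)=7/16$ one gets $\tilde f(0)=-9/16\in I_0-1$, so the image meets $I_0-1$, a translate $\overline{J}$ did not meet, while still meeting $I_1$; the count does not drop and the induction does not terminate. The correct use of the hypothesis $|J|<1$ is not monotonicity but disjointness: $\overline{J}$ is disjoint from $\overline{J}+n$ for $n\neq0$, hence inside $A_q$ the ``active'' piece $\overline{J}\cap A_q$ and the ``ghost'' pieces $(\overline{J}+n)\cap A_q$, $n\ge1$, are pairwise disjoint, with the ghosts lying entirely to the right of $b$. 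One may therefore choose $f$ to realize the desired motion on the active piece while being the identity on a neighbourhood of the ghosts; with that choice the left endpoint of $J$ is genuinely fixed by the move and a two-move version of your induction closes. As written, the argument does not.
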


\begin{proof}
	Observe that $F_{X,I_i}=F_{\varphi^n(X),I_i-n}=F_{X,I_i-n}$ for every $i=0,1$ and $n\in \Z$, by the domain relations \eqref{e-chart}. Thus $K$ contains $F_{X, L}$ whenever $L$ is an interval obtained by translating  $I_0$ or $I_1$ by an integer.   Let $U$ be the smallest interval containing both $I$ and $J$. Without loss of generality we assume that the rightmost point of $U$ coincides with that of $J$. Choose a cover of $\overline{U}$ by a sequence  $L_1, \ldots ,  L_m$ of intervals, each of which is a translate  of $I_0$ or $I_1$ by an integer, such that $L_i\cap L_{i+1}\neq \emptyset$, for every $i=1,\ldots, m-1$. We argue by induction on $m$. If $m=1$, by Lemma \ref{l:conj_in_F} we can find an element $f\in F_{L_1}$ such that $f(I)=f(J)$. The element $k\in F_{X, L_1}$ which  acts as $f$ in the flow direction will then verify the conclusion.  If $m\ge 2$, we can chose an element $f\in F_{L_m}$ wich maps the leftmost point of $J$ into $L_{m-1}$, so that $I$ and $f(J)$ are in the interval covered by $L_1,\ldots, L_{m-1}$.  Letting $g\in F_{X, L_m}$ be the  element corresponding to $f$, we have $g(U_{C\times  J})=U_{C\times  f(J)}$, and we conclude by the induction hypothesis applied to  the intervals $I$ and $f(J)$.
\end{proof}

\begin{prop} \label{p-fin-gen}
Let $(X, \varphi)$ be a Stone system. Assume that $(X, \varphi)$ is conjugate to a subshift, and that $X=C_1\sqcup\cdots \sqcup C_d$ is a generating clopen partition of  $(X, \varphi)$. 
Then the group $\Tsf(\varphi)$ is generated by the subgroups $F_{X, I_0}$ and $F_{C_i, I_1}$, for $i=1,\ldots, d$.

\end{prop}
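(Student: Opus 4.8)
The plan is to show that the subgroup $H:=\langle F_{X,I_0}, F_{C_i,I_1} : i=1,\dots,d\rangle$ contains every generator $F'_{C,I}$ from Proposition \ref{p-inf-gen}, which by that proposition forces $H=\Tsf(\varphi)$. The overall strategy is to bootstrap: starting from the given finite list, I would first enlarge the collection of available subgroups using the domain relations \eqref{e-chart} and the fact that the partition is \emph{generating}, then use the ``moving'' mechanism of Lemma \ref{l-moving-intervals} together with the Intersection and Fragmentation Lemmas to reach an arbitrary dyadic chart.

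First I would record that $F_{X,I_0}\le H$ gives, via \eqref{e-chart}, all the subgroups $F_{X,I_0-n}$ and $F_{X,I_1-n}$ for $n\in\Z$ (the latter since $F_{C_i,I_1-n}=F_{\varphi^{-n}(C_i),I_1}$ after translating, and the $\varphi^{n}(C_i)$ are at our disposal because\dots). More carefully, the key leverage is that the partition is generating: the boolean algebra of clopen sets is generated by the cylinders $\varphi^{n}(C_i)$. So I would next prove that for \emph{every} clopen $C\subset X$ one has $F_{C,I_1}\le H$. The building blocks $F_{C_i,I_1}$ are given; applying Lemma \ref{l-moving-intervals} with the group $K=\langle F_{X,I_0},F_{X,I_1}\rangle\le H$ lets me conjugate domains $U_{C\times J}$ to $U_{C\times I}$ for intervals of length $<1$, which in particular (tracking how the clopen set transforms under the flow-direction $F$-action, where the clopen part is left untouched) lets me produce $F_{\varphi^{n}(C_i),I_1}\le H$ for all $n$. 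Then the Intersection Lemma (Lemma \ref{l-intersection}) supplies intersections $F'_{C\cap D,I_1}$ and differences $F'_{C\setminus D,I_1}$ of domains I already control; since finite intersections, unions and complements of the $\varphi^{n}(C_i)$ generate \emph{all} clopen subsets, a straightforward boolean-algebra induction yields $F_{C,I_1}\le H$ for every clopen $C\subset X$.

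Once $F_{C,I_1}\le H$ for all clopen $C$, I would promote the interval $I_1$ to an arbitrary dyadic interval. Given a dyadic chart $\pi_{C,I}$ with $|I|<1$, Lemma \ref{l-moving-intervals} provides $k\in K\le H$ with $k(U_{C\times I_1})=U_{C\times I}$, whence $F_{C,I}=kF_{C,I_1}k^{-1}\le H$; for a general dyadic interval $I$ (possibly of length $\ge 1$) I would cover $C\times I$ by charts $C\times I_j$ with $|I_j|<1$ and small overlaps and invoke the Fragmentation Lemma (Lemma \ref{l-cover}) to conclude $F'_{C,I}\le H$. Having obtained every generator $F'_{C,I}$, Proposition \ref{p-inf-gen} gives $H=\Tsf(\varphi)$.

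I expect the main obstacle to be the second step, namely passing from the finitely many $F_{C_i,I_1}$ to $F_{C,I_1}$ for \emph{arbitrary} clopen $C$. This is exactly where the generating-partition hypothesis must be used in an essential way, and it requires care to see that the Intersection Lemma's output ($F'_{C\cap D,I}$, $F'_{C\setminus D,I}$, etc.) interacts correctly with the domain relation \eqref{e-chart} under the flow, so that the clopen sets $\varphi^{n}(C_i)$ genuinely become available and the boolean algebra they generate is swept out. The remaining steps are comparatively routine applications of the fragmentation machinery already established in Section \ref{s-subgroups}.
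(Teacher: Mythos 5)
Your proposal follows essentially the same route as the paper's proof: pass to $K=\langle F_{X,I_0},F_{X,I_1}\rangle\le H$, use Lemma \ref{l-moving-intervals} together with the domain relations \eqref{e-chart} to obtain $F_{\varphi^n(C_i),J}\le H$ for all $n\in\Z$ and all dyadic $J$ with $|J|<1$, then sweep out the Boolean algebra generated by the sets $\varphi^n(C_i)$ via the Intersection and Fragmentation Lemmas, and conclude with Proposition \ref{p-inf-gen}. The one step you leave implicit (where your inclusion $K\le H$ is asserted) is why $F_{X,I_1}\le H$ in the first place: every element of $F_{X,I_1}$ is the commuting product of its restrictions to the domains $U_{C_i\times I_1}$, i.e.\ of elements of the given generators $F_{C_i,I_1}$, which is exactly the opening observation of the paper's argument.
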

\begin{proof}
Let $H \le \Tsf(\varphi)$ be  the group generated by the subgroups in the statement, and let us show that $H=\Tsf(\varphi)$. 
We begin by observing that every element  $g\in F_{X, I_1}$ can be written as a commuting product $g=g_1\cdots g_d$ of elements $g_i\in F_{C_i, I_1}$, so that  $H$ contains the group $K:=\langle F_{X, I_0}, F_{X, I_1}\rangle$ as in the statement of Lemma \ref{l-moving-intervals}. Thus conjugating the subgroups $F_{C_i, I_1}$ by elements of $K$, we deduce that  $H$ contains $F_{C_i, J}$ for every $i=1,\ldots, d$ and   every dyadic interval $J$ such that $|J|<1$. By the domain relations \eqref{e-chart},  this is actually equivalent to say that $H$ contains  $F_{\varphi^n(C_i), J}$ for every $n\in \Z$, every $i=1,\ldots, d$, and every $J$ with $|J|<1$. Using now the Intersection Lemma (Lemma \ref{l-intersection}), we obtain that $F_{D, J} \le H$ whenever  $D\subset X$ is a clopen subset that can be written as a finite intersection  of the form $D=\bigcap_{j=1}^k \varphi^{n_j}(C_{I_j})$ with $n_j\in \Z$ and $i_j=1,\ldots, d$ (and $|J|<1$ is an open dyadic interval). Now the fact that the partition $C_1,\ldots, C_d$ is generating  means exactly that an arbitrary clopen set  $C\subset X$ can be written as a finite union $C=D_1\cup\cdots \cup D_\ell$ of sets of that form. So the Fragmentation Lemma (Lemma \ref{l-cover}) implies that $F_{C, J}\le H$ for every clopen subset $C\subset X$  and every open dyadic interval $|J|<1$. By Proposition \ref{p-inf-gen}, we deduce that $H=\Tsf(\varphi)$.
\end{proof}

\begin{proof}[Proof of Theorem \ref{t-fg}]
Since the subgroups $F_{X, I_0}$ and $F_{C_i, I_1}$, for every  $i=1,\ldots, d$, are all isomorphic to Thompson's group $F$, and the latter is finitely generated, Proposition \ref{p-fin-gen} implies Theorem~\ref{t-fg}.
\end{proof}
The converse of Theorem \ref{t-fg} is also true: namely if $(X, \varphi)$ is not conjugate to a subshift, then the group $\Tsf(\varphi)$ is never finitely generated.
\begin{prop} \label{p-not-fin-gen}
Let $(X, \varphi)$ be a Stone system which is not conjugate to a subshift. Then the group $\Tsf(\varphi)$ is not finitely generated. 
\end{prop}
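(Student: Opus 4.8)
The plan is to show that any finitely generated subgroup of $\Tsf(\varphi)$ can only ``see'' clopen sets lying in a fixed finitely generated $\varphi$-invariant subalgebra of clopen sets, which by hypothesis cannot be the whole algebra. First I would record the reformulation of the subshift condition from the beginning of Section~\ref{s-fg}: a generating partition is exactly a finite clopen family whose $\varphi$-translates generate the boolean algebra $\mathcal{A}$ of all clopen subsets of $X$. Hence $(X,\varphi)$ is conjugate to a subshift if and only if $\mathcal{A}$ is finitely generated as a $\varphi$-invariant boolean algebra; so under our hypothesis, \emph{every} finitely generated $\varphi$-invariant subalgebra $\mathcal{B}\subseteq \mathcal{A}$ is proper, i.e.\ there is a clopen $D\notin\mathcal{B}$.

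Next I would isolate the relevant notion of ``seeing''. By compactness of $Y^\varphi$ and Definition~\ref{d-group}, every $g\in\Tsf(\varphi)$ admits a \emph{chart decomposition}: a finite partition of $Y^\varphi$ into dyadic domains $U_{C_i\times J_i}$ on each of which $g$ is given in coordinates by $(x,t)\mapsto(x,f_i(t))$ with $f_i$ dyadic PL. Given a $\varphi$-invariant subalgebra $\mathcal{B}$, call $g$ \emph{$\mathcal{B}$-adapted} if it admits such a decomposition with all $C_i\in\mathcal{B}$. The key structural claim is that the $\mathcal{B}$-adapted elements form a subgroup of $\Tsf(\varphi)$. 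For inverses this is immediate after relabelling domains via the relations \eqref{e-chart}, using that $\mathcal{B}$ is $\varphi$-invariant. For products $gh$, one takes a common refinement: at a point $y=[x,t]$ one applies $h$, writes $h(y)$ in a standard domain $U_{\varphi^n(C)\times(f(J)-n)}$, and composes with a chart of $g$ at $h(y)$ over a base point $C'\in\mathcal{B}$. Since $h$ fixes the $X$-coordinate up to applying a power $\varphi^n$, the relevant $X$-piece for $gh$ at $y$ is $C\cap\varphi^{-n}(C')$, which lies in $\mathcal{B}$. I expect this bookkeeping around the domain relations \eqref{e-chart}—tracking the $\varphi$-powers produced when flow-direction maps wrap past integer boundaries, and checking that the $X$-pieces stay in $\mathcal{B}$—to be the main obstacle, and it is precisely here that $\varphi$-invariance of $\mathcal{B}$ is indispensable.

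With the subgroup claim in hand, suppose for contradiction that $\Tsf(\varphi)=\langle g_1,\dots,g_k\rangle$. Let $\mathcal{S}$ be the (finite) set of all clopen sets appearing in chosen chart decompositions of $g_1,\dots,g_k$, and let $\mathcal{B}$ be the $\varphi$-invariant subalgebra it generates; this is finitely generated, so each $g_j$ is $\mathcal{B}$-adapted and therefore so is every element of $\Tsf(\varphi)$. Choosing a clopen $D\notin\mathcal{B}$, a dyadic interval $J$ with $|J|<1$, and a nontrivial element $g\in F_{D,J}$ (which lies in $\Tsf(\varphi)$ by Definition~\ref{d-local-F}), I would derive a contradiction: since $|J|<1$ the domain $U_{D\times J}$ is embedded without wrapping and $g$ fixes every point outside it, so the $X$-projection of the locus where $g$ acts nontrivially is exactly $D$; on the other hand, a $\mathcal{B}$-adapted chart decomposition of $g$ expresses this locus as a finite union of pieces in $\mathcal{B}$, forcing $D\in\mathcal{B}$. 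This contradiction shows $\Tsf(\varphi)$ is not finitely generated.
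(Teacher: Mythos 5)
Your proposal follows essentially the same route as the paper's proof (which attributes the argument to Matui): for each $\varphi$-invariant Boolean algebra $\mathcal{B}$ of clopen sets, define a subgroup of elements adapted to $\mathcal{B}$, observe that every finitely generated subgroup of $\Tsf(\varphi)$ lies in such a subgroup for some \emph{finitely generated} $\varphi$-invariant $\mathcal{B}$, and conclude via the characterisation of subshifts by generating partitions. One technical slip worth flagging: as literally stated, your notion of $\mathcal{B}$-adapted is vacuous, because the domains $U_{C_i\times J_i}$ with $J_i$ open are open but not closed, so finitely many of them can never form a \emph{partition} of the compact space $Y^\varphi$. The paper sidesteps this by working fibre by fibre: for each $t\in\R/\Z$ it partitions the Stone fibre $X_t=\eta^{-1}(\{t\})$ into clopen pieces, each contained in a single chart domain, and asks that these pieces lie in the translated algebra $\mathcal{B}_t$; your composition bookkeeping (intersections $C\cap\varphi^{-n}(C')$ staying in $\mathcal{B}$) and your final contradiction with $F_{D,J}$ for $D\notin\mathcal{B}$ go through verbatim in that formulation, or with covers in place of partitions.
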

\begin{proof}
The proof is a standard argument in the setting of topological full groups,  see the beginning of the proof of Theorem 5.4 in   Matui  \cite{Mat-simple}. We sketch how to adapt the argument to this setting, following an approach kindly suggested by the referee. Given a Stone system $(X, \varphi)$, the projection $X\times \R \to \R$ passes to the quotient to a map $\eta \colon Y^\varphi \to \R/\Z$. For $t\in \R/\Z$ let $X_t=\eta^{-1}(\{t\})$.  Now let $\mathcal{B}$ be a $\varphi$-invariant Boolean algebra of clopen subsets of $X$. We can translate $\mathcal{B}$ to a Boolean algebra $\mathcal{B}_t$ of $X_t$ for each $t$, which only depends on $t$ modulo $1$ by $\varphi$-invariance of $\mathcal{B}$. For every $g\in \Tsf(\varphi)$ and every $t\in \R/\Z$, there exists  a clopen partition $X_t=C_1 \sqcup \cdots \sqcup C_n$ with the property that for every $i$,     the set $C_i$ is entirely contained in  the domain $U_{C \times I}$ of a chart where $g$ is given in coordinates by $\id \times f$ for some PL homeomorphism $f\colon I \to f(I)$.   We say that $g$ is $\mathcal{B}$-\emph{admissible} if for every $t\in \R/\Z$ such a partition can be chosen with  $C_i\in \mathcal{B}_t, i=1,\ldots, r$.  The set of $\mathcal{B}$-admissible elements is a subgroup $\Tsf(\varphi, \mathcal{B})$ of $\Tsf(\varphi)$, and it is easy to see that it is a strict subgroup unless $\mathcal{B}$ separates points of $X$ (i.e.\ coincides with the Boolean algebra of all clopen subsets of $X$). Moreover  every finitely generated subgroup $G\le \Tsf(\varphi)$ is contained in $\Tsf(\varphi, \mathcal{B})$ for some $\mathcal{B}$ which is \emph{finitely generated} as a $\varphi$-invariant Boolean algebra  (that is, it is generated as a Boolean algebra  by a finite number of clopen subsets together with all their $\varphi$-translates). Thus if $\Tsf(\varphi)$ is itself finitely generated, we deduce that the Boolean algebra of all clopen subsets of $X$ is itself finitely generated as a  $\varphi$-invariant Boolean algebra, which implies that  $(X, \varphi)$ must be conjugate to a subshift.
\end{proof}

Note that the same argument provides the following statement, that shows that the study of arbitrary finitely generated subgroups of $\PL(\varphi)$ can essentially be reduced to the case where $(X, \varphi)$ is a subshift. We record it here as it will be useful in Section \ref{s-T}.
\begin{lem}\label{l-fg-subshift}
Let $(X, \varphi)$ be a Stone system, and let $G$ be a finitely generated subgroup of $\PL(\varphi)$. Then there exists a subshift  $(\bar{X}, \bar{\varphi})$ and a homomorphism $G\to \PL(\bar{\varphi})$ with abelian kernel. 
\end{lem}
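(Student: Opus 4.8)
The plan is to adapt the Boolean-algebra argument from the proof of Proposition \ref{p-not-fin-gen}, and then to construct and analyze the resulting descent homomorphism. The definition of $\mathcal{B}$-admissibility used there is stated for arbitrary (not necessarily dyadic) PL maps in the flow direction, so it applies verbatim to $\PL(\varphi)$: for a $\varphi$-invariant Boolean algebra $\mathcal{B}$ of clopen subsets of $X$ one obtains a subgroup $\PL(\varphi,\mathcal{B})\le\PL(\varphi)$ of $\mathcal{B}$-admissible elements. First I would run that argument to show that any finitely generated $G\le\PL(\varphi)$ is contained in $\PL(\varphi,\mathcal{B})$ for some $\mathcal{B}$ that is finitely generated as a $\varphi$-invariant Boolean algebra (take $\mathcal{B}$ generated by the finitely many clopen sets occurring in charts that express a finite generating set of $G$, together with all their $\varphi$-translates). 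Let $(\bar X,\bar\varphi)$ be the Stone system dual to $\mathcal{B}$, with factor map $p\colon X\to\bar X$ intertwining $\varphi$ and $\bar\varphi$; since $\mathcal{B}$ is finitely generated as a $\varphi$-invariant Boolean algebra, its generators induce a generating partition of $(\bar X,\bar\varphi)$, so this system is conjugate to a subshift.

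Next I would construct the descent homomorphism. Write $\bar p\colon Y^\varphi\to Y^{\bar\varphi}$, $[x,t]\mapsto[p(x),t]$, for the induced map of suspensions. If $g\in\PL(\varphi,\mathcal{B})$ has displacement $\tau_g\colon Y^\varphi\to\R$ (so $g(y)=\Phi^{\tau_g(y)}(y)$), then $\mathcal{B}$-admissibility forces the local flow-direction map of $g$ at each level to depend only on the image of the point under $p$; hence $\tau_g$ is constant on the fibers of $\bar p$ and factors as $\tau_g=\bar\tau_g\circ\bar p$ for a continuous $\bar\tau_g\colon Y^{\bar\varphi}\to\R$ which is still piecewise linear (and locally constant in the transverse direction) along the flow. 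Consequently $g$ descends to a well-defined homeomorphism $\bar g\in\PL(\bar\varphi)$ with $\bar p\circ g=\bar g\circ\bar p$, and $\Psi\colon g\mapsto\bar g$ is a homomorphism $\PL(\varphi,\mathcal{B})\to\PL(\bar\varphi)$. Restricting $\Psi$ to $G$ yields the required map, and it remains to control its kernel.

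The main work, and the part I expect to be most delicate, is showing that $\ker\Psi$ is abelian. An element $g\in\ker\Psi$ preserves every fiber of $\bar p$; unwinding this in suspension coordinates shows that $\tau_g$ is integer-valued, hence locally constant, so that locally $g$ coincides with an integer flow-translation $\Phi^n$. Thus $g$ commutes with the flow and is the suspension of an element of the topological full group $[[\varphi]]$ of the form $x\mapsto\varphi^{\bar n(p(x))}(x)$, where $\bar n\colon\bar X\to\Z$ is continuous and satisfies the periodicity condition $\bar\varphi^{\bar n(\bar x)}(\bar x)=\bar x$ coming from $\bar g=\id$. The assignment $g\mapsto\bar n$ is injective (since $\bar n=0$ forces $g=\id$), and it is a homomorphism into the abelian group $\mathsf{C}(\bar X,\Z)$: for two such elements $g,h$ with data $\bar n,\bar m$, the periodicity condition gives $p(h(x))=\bar\varphi^{\bar m(p(x))}(p(x))=p(x)$, so the exponents simply add, $(g\circ h)(x)=\varphi^{\bar n(p(x))+\bar m(p(x))}(x)$, which is symmetric in $g$ and $h$. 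Hence $\ker\Psi$ embeds in $\mathsf{C}(\bar X,\Z)$ and is abelian, and the same holds for $G\cap\ker\Psi$. The crucial point making the argument work — and the reason the PL setting behaves better than the topological-full-group setting, where the analogous kernel need not be abelian — is precisely this interplay between the periodicity condition $\bar\varphi^{\bar n}=\id$ and the additivity of exponents.
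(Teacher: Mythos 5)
Your proof is correct and follows essentially the same route as the paper: reduce to the subgroup $\PL(\varphi,\mathcal{B})$ for a finitely generated $\varphi$-invariant Boolean algebra $\mathcal{B}$, pass to the dual subshift via Stone duality, and descend along the induced map of suspensions. The only cosmetic difference is in the kernel analysis: the paper notes that a kernel element acts on each $\Phi$-orbit as an integer-time flow translation and embeds the kernel into a product of cyclic groups indexed by orbits, whereas you embed it into the group of continuous $\Z$-valued functions on $\bar X$ via the exponent function; both hinge on the same observation that the displacement of a kernel element is integer-valued and locally constant.
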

\begin{proof} In the proof we keep the same setting and notations from the proof of Proposition \ref{p-not-fin-gen}. For a $\varphi$-invariant Boolean algebra $\mathcal{B}$, we have a subgroup $\PL(\varphi, \mathcal{B}) \le \PL(\varphi)$ which is defined analogously to $\Tsf(\varphi, \mathcal{B})$. 

Fix a Stone system $(X, \varphi)$ and write $Y:= Y^\varphi$.
Let $G\le \PL(\varphi)$ be a finitely generated subgroup. Then we have  $G \le \PL(\varphi, \mathcal{B})$ for some finitely generated $\varphi$-invariant Boolean algebra $\mathcal{B}$. By Stone duality, there exists a Stone system $(\bar{X}, \bar{\varphi})$ and a factor map $p\colon X\to \bar{X}$ from $(X, \varphi)$ to $(\bar{X}, \bar{\varphi})$ such that  $\mathcal{B}$ coincides with the Boolean algebra of preimages of clopen subsets of $\bar{X}$ via $p$. The system $(\bar{X}, \bar{\varphi})$ is conjugate to a subshift (using that $\mathcal B$ is finitely generated). Set $\bar{Y}:= Y^{\bar{\varphi}}$, denote by  $\bar{\eta} \colon \bar{Y} \to \R/\Z$ the associated projection to the circle, and $\bar{X}_t:= \bar{\eta}^{-1}(t)$ its fibers for $t\in \R/\Z$. The map $p\colon X\to \bar X$ then defines a map $\hat{p} \colon Y \to \bar{Y}$, which in restriction to each fiber $X_t$ coincides with the quotient map $X_t \to \bar{X}_t$ associated to the Boolean algebra $\mathcal{B}_t$.  Every element $g\in \PL(\varphi, \mathcal{B})$ (and hence every $g\in G$) descends via the map $\hat{p} \colon Y \to \bar{Y}$ to an element $\bar{g}\in \PL(\bar{\varphi})$, providing the desired homomorphism $G\to \PL(\bar{\varphi})$. To see that its kernel must be abelian, let  $g\in G$ be such that $\bar{g}=\id$.  Since the map $\hat{p}$ sends $X_t$ to $\bar{X}_t$ for every $t \in \R/\Z$ and $\bar{g}$ acts trivially on $\overline{Y}$, we have that $g(X_t)=X_t$ for every $t$. This implies that in restriction to each $\Phi$-orbit $g$ must coincide with a translation by an integer time of the flow $\Phi$. Thus the kernel of $G \to \PL(\varphi)$ acts on each $\Phi$-orbit as a cyclic group of integer translations. This implies that  it embeds in a product of cyclic groups, showing that it is abelian. 
\end{proof}

\section{Actions on the circle}\label{s-circle}

In this section we study actions on the circle of $\Tsf(\varphi)$ and prove Theorem \ref{t-circle}. We begin with a lemma. 

\begin{lem}\label{l-compactness-prod}
Let $(X, \varphi)$ be a Stone system. 
Let $g\in \Tsf(\varphi)$ be an element whose closed support is contained  in a domain $U_{C\times I}$. Then there exists a partition $C=C_1\sqcup \cdots \sqcup C_r$ of $C$ into clopen subsets such that $g$ can be written as a commuting product $g=g_1\cdots g_r$ with $g_j\in F_{C_j, I}$ for $j=1,\ldots, r$.
\end{lem}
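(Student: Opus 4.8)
The plan is to show that $g$ acts independently on each ``vertical slice'' $\{[x,s] : s\in I\}$ by an element $h_x\in F_I$ depending only \emph{locally constantly} on $x\in C$, and then to read off the partition from the compactness of $C$.

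\emph{Step 1: slice preservation.} Since $g\in\Hsf_0(\varphi)$, I would write $g(y)=\Phi^{\tau(y)}(y)$ for a continuous $\tau\colon Y^\varphi\to\R$. The hypothesis that the closed support of $g$ lies in the open set $U_{C\times I}$ forces $g$ to fix $Y^\varphi\setminus U_{C\times I}$ pointwise, and hence (the same applying to $g^{-1}$) to map $U_{C\times I}$ bijectively onto itself. Now fix $x\in C$ and consider the $\Phi$-orbit $\mathcal{O}$ of $[x,0]$. Because $|I|<\tau_C$, the intersection $\mathcal{O}\cap U_{C\times I}$ decomposes into pairwise disjoint open slices $\{[\varphi^m(x),s]:s\in I\}$, indexed by the integers $m$ with $\varphi^m(x)\in C$, and consecutive slices are separated by gaps of length at least $\tau_C-|I|>0$ on which $g$ is the identity. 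As $g$ restricts to an orientation-preserving self-homeomorphism of $\mathcal{O}$ fixing these gaps pointwise, it must preserve each connected component, i.e.\ each slice. Taking $m=0$ gives $g([x,t])=[x,h_x(t)]$ for $t\in I$, where $h_x$ is an increasing homeomorphism of $I$; since $g\in\Tsf(\varphi)$ and is the identity near $\partial I$ (the closed support being compact inside the open domain), $h_x$ defines an element of $F_I$.

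\emph{Step 2: local constancy of $x\mapsto h_x$.} Fix $x_0\in C$. For each $t\in\overline{I}$, Definition \ref{d-group} applied at $[x_0,t]$ provides a dyadic chart $\pi_{D_t,J_t}$ with $[x_0,t]\in U_{D_t\times J_t}$ and a dyadic PL homeomorphism $f_t$ such that $g$ acts as $(z,s)\mapsto (z,f_t(s))$ on $D_t\times J_t$; comparing with Step 1, this means $h_z=f_t=h_{x_0}$ on $J_t$ for all $z\in D_t$. By compactness of $\overline{I}$, finitely many intervals $J_{t_1},\dots,J_{t_k}$ cover $\overline{I}$; setting $D=\bigcap_i D_{t_i}$, a clopen neighbourhood of $x_0$, every $z\in D$ satisfies $h_z=h_{x_0}$ on each $J_{t_i}$ and hence on all of $I$, so $h_z=h_{x_0}$ in $F_I$. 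Thus $x\mapsto h_x$ is locally constant.

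\emph{Step 3: conclusion.} Since $C$ is compact and $x\mapsto h_x$ is locally constant, it takes finitely many values $f_1,\dots,f_r\in F_I$, and the level sets $C_j=\{x\in C:h_x=f_j\}$ form a clopen partition of $C$. Letting $g_j\in F_{C_j,I}$ be the element acting as $f_j$ in the flow direction on $C_j\times I$ and trivially elsewhere, the $g_j$ have the pairwise disjoint supports $U_{C_j\times I}$ and therefore commute, while $g=g_1\cdots g_r$ since both sides act as $f_j$ on each $C_j\times I$ and as the identity outside $C\times I$. The routine bookkeeping is the slice analysis of Step 1; I expect the conceptual crux to be Step 2, where two nested compactness arguments (first over $\overline{I}$ to force $h_z$ to agree with $h_{x_0}$ on the whole fibre, then over $C$ to extract a finite partition) turn the pointwise, locally defined data of Definition \ref{d-group} into a single global PL homeomorphism on each clopen piece.
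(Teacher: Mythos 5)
Your proof is correct and follows essentially the same route as the paper's: show that $g$ preserves each fibre $\pi_{C,I}(\{x\}\times I)$ and acts on it by an element $h_x$ of $F_I$, use the charts of Definition \ref{d-group} together with compactness of $\overline{I}$ to see that $x\mapsto h_x$ is locally constant, and then use compactness of $C$ to extract the clopen partition from the level sets. The only difference is that the paper asserts the fibre preservation of your Step 1 in a single line, whereas you justify it via the gap argument on $\Phi$-orbits; the double compactness argument at the heart of the proof is identical.
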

\begin{proof}

Fix $x\in C$. The element $g$ must preserve the fiber $\pi_{C\times I}(\{x\}\times I)$ and act on it by an element $f_x\in F'_I$ (because its closed support is contained in $I$). Furthermore, for every $y=\pi_{C,I}(x, t)\in U_{C\times I}$ there exists a clopen neighbourhood $C_y\subset C$ of $x$ and an interval $J_y\subset I$ containing $t$, such that for every $x$ and $x'\in C_y$ we have  $f_x|_{J_y}=f_{x'}|_{J_y}$. By compactness  choose $y_1,\ldots, y_\ell\in\pi_{C, I}(\{x\}\times I)$ such that $ J_{y_1},\ldots, J_{y_\ell}$ cover the support of $f_x$ and set $C_x=\bigcap_{i=1}^\ell C_{y_i}$. The clopen subset $C_x\subset C$ verifies $f_x=f_{x'}$ for every $x'\in C_x$. Since $x$ was arbitrary, we can cover $X$ with finitely many clopen subsets with this property, and refine this cover to obtain a partition $C=C_1\sqcup\cdots \sqcup C_r$ which verifies the desired conclusion.
\end{proof}

We will use the following notation. Given a group  $G$ acting by homeomorphism on a topological space  $Z$, and given a point $z\in Z$ we denote by $G_z$ the stabiliser of $z$ in $G$, and by $G^0_z$ the \emph{germ-stabiliser}, namely the subgroup of $G_z$ consisting of elements that fix pointwise a neighbourhood of $z$. The next two lemmas analyse the germ-stabilisers for the action of $\Tsf(\varphi)$ on $Y^\varphi$.

\begin{lem}[(First Return Decomposition)]\label{l-stab-prod}
Let $(X,\varphi)$ be a minimal Stone system, and fix $y\in Y^\varphi$.  For every finite subset  $P\subset \Tsf(\varphi)_y^0$, there exist disjoint dyadic domains $U_{C_1\times I_1},\ldots, U_{C_r\times I_r}$ not containing $y$ such that $P$ is contained in the direct product $F'_{ C_1, I_1}\times \cdots \times F'_{C_r, I_r} \le \Tsf(\varphi)_y^0$.

In particular, the germ-stabilizer  $\Tsf(\varphi)_y^0$ is perfect, has no non-abelian
free subgroup, and no non-trivial finite quotient.
  \end{lem}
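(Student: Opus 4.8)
The plan is to prove the displayed inclusion by a first-return (Kakutani--Rokhlin) tower argument relative to a clopen transversal through $y$, and then to read off the three algebraic consequences. After translating along the flow, write $y=[x_0,0]$. Since $P$ is finite and each of its elements fixes a neighbourhood of $y$ pointwise, there is an open set $V\ni y$ on which every $g\in P$ restricts to the identity. First I would fix a clopen set $D\ni x_0$ and dyadic numbers $\alpha_0<0<\beta_0$ with $\beta_0-\alpha_0<1$ such that the box $\{[x,t]:x\in D,\ \alpha_0\le t\le\beta_0\}$ is contained in $V$; by compactness of this box and openness of $V$ (a tube-lemma argument) I may also fix a uniform margin $\eta>0$ with $\{[x,t]:x\in D,\ \alpha_0-\eta<t<\beta_0+\eta\}\subset V$. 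Thus every $g\in P$ is the identity on a uniform flow-neighbourhood of that box.

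Next I would bring in the first-return structure. Let $r_D\colon D\to\N$ be the first-return time of $\varphi$ to $D$; it is locally constant with finitely many values, and it partitions $D=\bigsqcup_j D^{(j)}$ (where $r_D\equiv j$ on $D^{(j)}$) and $Y^\varphi$ into flow towers, the tower over $x\in D^{(j)}$ being $\{[x,t]:\alpha_0\le t<\alpha_0+j\}$. The key observation is that on the orbit through any $x\in D^{(j)}$ each $g\in P$ fixes $t\in(\alpha_0-\eta,\beta_0+\eta)$ and, applying the same to the return point $\varphi^j(x)\in D$, also $t\in(\alpha_0+j-\eta,\beta_0+j+\eta)$; being an increasing homeomorphism fixing neighbourhoods of $\beta_0$ and of $\alpha_0+j$, it preserves the free interval $I_j:=(\beta_0,\alpha_0+j)$ and acts on it by an element of $F'_{I_j}$ (compactly supported by the margin $\eta$, cf.\ \eqref{e:derivedsub}). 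Consequently $g$ preserves each flow box $\mathcal F_j:=U_{D^{(j)}\times I_j}$ and is supported in $\bigsqcup_j\mathcal F_j$. The crucial point here --- and what makes any vertical cutting unnecessary --- is that $|I_j|=j-(\beta_0-\alpha_0)<j\le\tau_{D^{(j)}}$, so each $\pi_{D^{(j)},I_j}$ is a legitimate (possibly tall) chart even though $|I_j|$ may exceed $1$.

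Then I would make the action $x$-independent on each box. Applying Lemma \ref{l-compactness-prod} to each $g\in P$ in restriction to the domain $\mathcal F_j$ and taking the common refinement over the finite set $P$, I obtain clopen partitions $D^{(j)}=\bigsqcup_l D^{(j)}_l$ such that every $g\in P$ acts on $U_{D^{(j)}_l\times I_j}$ as a single product $\id\times\psi$ with $\psi\in F'_{I_j}$ (the refined charts stay valid since $\tau_{D^{(j)}_l}\ge\tau_{D^{(j)}}\ge j>|I_j|$). The domains $U_{D^{(j)}_l\times I_j}$ are pairwise disjoint, and their closures avoid $y$: a point $[x,t]$ with $x\in D^{(j)}$ and $t\in[\beta_0,\alpha_0+j]$ can equal $[x_0,0]$ only if $t$ is an integer in $\{1,\dots,j-1\}$ with $\varphi^{-t}(x_0)\in D^{(j)}$, which contradicts $j$ being the first-return time of such a point to $D$. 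Decomposing each $g\in P$ as the commuting product of its pieces gives $P\subset\prod_{j,l}F'_{D^{(j)}_l,I_j}\le\Tsf(\varphi)_y^0$, the asserted conclusion.

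Finally, for the ``in particular'' clause I would apply the main statement to suitable finite sets. Taking $P=\{g\}$ shows every $g\in\Tsf(\varphi)_y^0$ lies in a subgroup isomorphic to a finite direct power $(F')^r$, with $F'$ the (simple, hence perfect) derived Thompson group; this exhibits $g$ as a product of commutators in $\Tsf(\varphi)_y^0$, so the germ-stabiliser is perfect. It also shows that any homomorphism from $\Tsf(\varphi)_y^0$ to a finite group is trivial, since a product of infinite simple groups has no non-trivial finite quotient. For the absence of non-abelian free subgroups, a finitely generated subgroup embeds into some $(F')^r$; since (by a classical theorem) $F'\le F$ contains no non-abelian free subgroup, and a non-abelian free group cannot embed into a finite product of such groups --- because non-trivial normal subgroups of a free group intersect non-trivially, so the projection kernels cannot have trivial intersection unless one projection is injective --- no such subgroup exists. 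The main obstacle, to my mind, is not any single estimate but arranging that the domains covering the supports of all elements of $P$ be \emph{simultaneously preserved} by every $g\in P$: an arbitrary cover would merely be permuted by the dynamics, and it is exactly the first-return tower picture, together with the freedom to use tall charts with $|I|<\tau_C$, that forces each $g$ to preserve each box and to act there with compactly supported flow displacement.
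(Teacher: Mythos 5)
Your argument is correct and follows essentially the same route as the paper's proof: both choose a pointwise-fixed dyadic box $U_{C\times J}$ around $y$, use the first-return (Kakutani--Rokhlin) decomposition of its complement into tall flow boxes $U_{C_j\times I_j}$ with $|I_j|<\tau_{C_j}$ that each $g\in P$ must preserve, and then invoke Lemma~\ref{l-compactness-prod} and a refinement of the clopen partition to land in a direct product of copies of $F'$. The only difference is presentational — you spell out the normal-subgroup argument for why a product of copies of $F'$ has no non-abelian free subgroup, where the paper simply cites Brin--Squier — so there is nothing to add.
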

\begin{proof}
Let $P\subset \Tsf(\varphi)_y^0$ be any finite subset. Choose a dyadic domain $U_{C\times J}\ni y$ with $J=(a, b)$ and $|J|<1/2$ such that every element $g\in P$ fixes pointwise a neighbourhood of the closure $\overline{U_{C\times J}}$. Write $\partial \overline{U_{C\times J}}=C_a\sqcup C_b$, where $C_a$ corresponds to $C\times\{a\}$ and $C_b$ to $C\times \{b\}$. By minimality, the  $\Phi$-orbit of every point in $C_b$ must eventually return to $U_{C\times J}$, and it must enter through a point in $C_a$. For $x\in C$, let $\tau(x)=\inf\{t>0\colon \Phi^{b+t}([x,0])\in C_a\}$. Note that $\tau(x)$ is of the form $n_x-(b-a)$ for some positive integer $n_x$ (equal to the first return time of $x$ to $C$ under $\varphi$).  The function $\tau\colon C\to \N-(b-a)$ is locally constant (hence continuous and bounded). Let $C=C_1\sqcup\cdots \sqcup C_r$ be the partition of $C$ into level sets  of $\tau$, with $\tau=\tau_j$ on $C_j$. By construction, the interval $I_j=(b, b+\tau_j)$ is such that the chart $\pi_{C_j,I_j}$ is defined (injective), and the domains $U_{{C_j}\times I_1}, \ldots, U_{{C_r}\times I_r}$ form a partition of $Y^\varphi\setminus \overline{U_{C\times J}}$. Every $g\in P$ must preserve every domain $U_{{C_j}\times I_j}$ and is the commuting product of elements $g_j$  supported in $U_{{C_j}\times I_j}$. By Lemma~\ref{l-compactness-prod}, upon refining the partition $C=\bigsqcup {C_j}$ we can assume that $g_j\in F'_{{C_j}\times I_j}$ for all $j$ and all $g\in P$.  After doing so,  $P$ is contained in   the subgroup of $\Tsf(\varphi)_y^0$ generated by the commuting subgroups $F'_{C_j\times I_j}\simeq F'$. 

As a consequence, similarly to the proof of Corollary \ref{c-perfect}, we deduce that the group $\Tsf(\varphi)_y^0$ is perfect and has no non-trivial finite quotient. Moreover, the group $F'$ does not contain non-abelian free subgroups \cite{Br-Sq}, so neither does  $\Tsf(\varphi)_y^0$.
\end{proof}

\begin{lem}\label{l-stab-generate}
Let $(X, \varphi)$ be a Stone system without finite orbits. For every two distinct points $y$ and $z\in Y^\varphi$,  we have \[\Tsf(\varphi)=\left(\Tsf(\varphi)_y^0\Tsf(\varphi)_z^0\right)\cup\left(\Tsf(\varphi)_z^0\Tsf(\varphi)_y^0\right).\] In particular $\Tsf(\varphi)$ is generated by  $\Tsf(\varphi)_y^0$ and $\Tsf(\varphi)_z^0$. 
\end{lem}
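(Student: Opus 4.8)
The plan is to reduce the whole statement to a single ``germ-correction'' construction, and then to decide which of the two product sets to use by inspecting the order structure of the relevant $\Phi$-orbit. First I would record the following reformulation: to show $g\in \Tsf(\varphi)_y^0\,\Tsf(\varphi)_z^0$ it suffices to produce $h\in\Tsf(\varphi)$ which coincides with $g^{-1}$ on a neighbourhood of $y$ and with the identity on a neighbourhood of $z$. Indeed, then $gh$ fixes a neighbourhood of $y$, so $gh\in\Tsf(\varphi)_y^0$, while $h^{-1}\in\Tsf(\varphi)_z^0$, and $g=(gh)h^{-1}$. Writing $w:=g^{-1}(y)$, such an $h$ must carry a neighbourhood of $y$ onto a neighbourhood of $w$ while fixing $z$; symmetrically, to obtain $g\in\Tsf(\varphi)_z^0\,\Tsf(\varphi)_y^0$ one builds an $h'$ equal to $g^{-1}$ near $z$ and to the identity near $y$, moving a neighbourhood of $z$ onto one of $w':=g^{-1}(z)$.

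Next I would fix the orientation. Since $g\in\Hsf_0(\varphi)$ it preserves every $\Phi$-orbit, and because $(X,\varphi)$ has no finite orbit every $\Phi$-orbit is a line on which $g$ acts with bounded displacement; hence $g$ restricts to an \emph{increasing} homeomorphism of each orbit, and in particular can never interchange two points of a common orbit. I denote by $[y,w]$ the closed flow-segment joining $y$ to $w=g^{-1}(y)$ inside their orbit (reduced to $\{y\}$ when $g(y)=y$), and likewise $[z,w']$.

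The heart of the argument is the construction of $h$ under the hypothesis $z\notin[y,w]$, and this is the step I expect to require the most care. Near $y$ the element $g^{-1}$ is given in a dyadic chart by $(x,t)\mapsto(x,\bar f(t))$ for a dyadic PL map $\bar f$ carrying the flow-coordinate of $y$ to that of $w$, and since the displacement is bounded, $y$ and $w$ lie within bounded flow-distance. Choosing a clopen set $C$ around the $X$-coordinate of $y$ whose first-return time $\tau_C$ exceeds this bound — possible precisely because the orbit is infinite — a single chart $\pi_{C,L}$ with $|L|<\tau_C$ contains both $y$ and $w$. Shrinking $C$ further to discard the finitely many interfering $\varphi$-translates and choosing $L$ tightly around the segment $[y,w]$, I can arrange $z\notin U_{C\times L}$; this is exactly where the hypothesis $z\notin[y,w]$ is used (and where the case ``$z$ in a different orbit'' is subsumed, the segment lying in the orbit of $y$). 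Finally I extend the germ of $\bar f$ at $y$ to an element $\tilde f\in F_L$ that is the identity near $\partial L$, using the flexibility of Thompson's group $F$ (Lemma \ref{l:conj_in_F}); the associated element of $F_{C,L}\le\Tsf(\varphi)$ is the desired $h$, and symmetrically for $h'$ when $y\notin[z,w']$.

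It then remains to verify that at least one of the two constructions always applies, i.e.\ that one cannot have both $z\in[y,w]$ and $y\in[z,w']$. These inclusions force $y$ and $z$ into a common orbit, where in flow coordinates $g$ is an increasing map $G$; writing $a,b$ for the coordinates of $y,z$ with $a<b$, the two inclusions give $G^{-1}(a)\ge b$ and $G^{-1}(b)\le a$, whence $G(b)\le a<b\le G(a)$, contradicting monotonicity of $G$. Hence for every $g$ at least one of the two decompositions holds, which is the claimed equality, and the ``in particular'' follows immediately.
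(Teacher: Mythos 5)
Your proposal is correct and follows essentially the same route as the paper's proof: a germ-correction element built in a single chart that contains the flow-segment between $y$ and its image (using the absence of finite orbits to get a chart of sufficient length, and the flexibility of $F$ to extend the dyadic PL germ while avoiding the other point), followed by the observation that monotonicity of $g$ on the orbit rules out the simultaneous failure of both cases. The only cosmetic difference is that you work with the segment $[y,g^{-1}(y)]$ and correct the germ at $y$, whereas the paper uses $[y,g(y)]$ and corrects at $g(y)$; the two are mirror images and yield the same dichotomy.
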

\begin{proof}

Let $g\in \Tsf(\varphi)$. The points $y$ and $g(y)$ lie on the same $\Phi$-orbit,  and delimit a unique interval on this orbit (possibly reduced to a point).

Assume at first that $z$ does not lie on this interval (in particular, this holds if $y$ and $z$ lie on different $\Phi$-orbits, or if $y=g(y)$). 
Choose a sufficiently small domain $U_{C\times I}\ni y$ such that $g$ is given in coordinates by $\id \times f$ for  a PL homeomorphism $f\colon I \to f(I)$, and write $y=[x, t]$, with $x\in C$ and $t\in I$. Let $J\subset \R$ be an open dyadic interval which contains both $t$ and $f(t)$. Since we assume that $\varphi$ has  no periodic orbits, we can find a clopen neighbourhood $D$ such that $x\in D\subset C$, and whose first return time is greater than $|J|$ (this was defined in \eqref{e-smallest_return}). With this choice, the domain $U_{D\times J}$ is well-defined and $y\in U_{D\times J}$. Furthermore, since $z$ does not lie on the arc between $y$ and $g(y)$, we can shrink this domain so that it avoids a neighbourhood of $z$. Now let $k\in F'_J$ be an element that coincides with $f^{-1}$ on a neighbourhood of $f(t)$, and $h=\id\times k\in F'_{D, J}$. Then $h\in \Tsf(\varphi)_z^0$ and $hg\in \Tsf(\varphi)_y^0$, so that $g=h^{-1} hg\in \Tsf(\varphi)_z^0\Tsf(\varphi)_y^0$.

We now consider the case where $z$  lies on the interval between $y$ and $g(y)$ on the corresponding $\Phi$-orbit. Let $\mathcal{O}$ be this orbit. Assume that $y<g(y)$ with respect to the natural orientation on $\mathcal{O}$, so that $y<z\le g(y)$. Since $g$ acts on $\mathcal{O}$ in an orientation-preserving way, this implies that  $g(y)<g(z)$, and thus $y$ does not lie on the interval between $z$ and $g(z)$. Thus we can exchange the roles of $z$ and $y$ and repeat the same argument as above to see that $g\in   \Tsf(\varphi)_y^0\Tsf(\varphi)_z^0$. The case $g(y)<y$ is treated similarly.
\end{proof}

\begin{rem}\label{r-p-inf-gen-minimal}
The previous two lemmas imply Proposition \ref{p-inf-gen} in the special case where $(X, \varphi)$ is an infinite minimal Stone system (note that the proposition has not been used in  their proofs). Indeed, in this case Lemma \ref{l-stab-prod}  implies that every germ-stabiliser $\Tsf(\varphi)_y^0$ is contained in the subgroup generated by the subgroups $F_{C, I}$, when $\pi_{C, I}$ varies over dyadic charts. By Lemma \ref{l-stab-generate}, the germs stabilisers generate $\Tsf(\varphi)$, and Proposition \ref{p-inf-gen} follows. 
\end{rem}

\begin{lem}\label{l-stab-fix}
Let $(X,\varphi)$ be a minimal Stone system. For every $y\in Y^\varphi$, every action of the group $\Tsf(\varphi)_y^0$ on the circle has a fixed point.

\end{lem}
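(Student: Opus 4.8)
The plan is to exploit the three group-theoretic properties of $G := \Tsf(\varphi)_y^0$ recorded in Lemma \ref{l-stab-prod}: $G$ is perfect, it contains no non-abelian free subgroup, and it admits no non-trivial finite quotient. Fix an arbitrary action $\alpha \colon G \to \Homeo(\Sbb^1)$. First I would note that, $G$ being perfect, it cannot surject onto $\Z/2\Z$; since the orientation character is a homomorphism to $\Z/2\Z$, the action $\alpha$ takes values in $\Homeo_+(\Sbb^1)$, i.e.\ it is orientation-preserving.

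Next I would invoke the Tits-type alternative for circle actions (Margulis's theorem; see \cite{GOD}): a group of orientation-preserving homeomorphisms of $\Sbb^1$ either contains a non-abelian free subgroup or preserves a Borel probability measure. As $G$ has no non-abelian free subgroup, the action $\alpha$ must preserve some probability measure $\mu$ on $\Sbb^1$. The remaining task is to upgrade the invariant measure to a genuine global fixed point, and here I would split according to whether $\mu$ has atoms.

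If $\mu$ has an atom, let $A\subset \Sbb^1$ be the set of atoms of maximal mass. Then $A$ is finite (total mass is $1$) and $G$-invariant, since $\alpha(g)$ permutes atoms preserving their mass. The induced homomorphism $G\to \mathrm{Sym}(A)$ has finite image, hence is trivial because $G$ has no non-trivial finite quotient; thus every point of $A$ is fixed by $G$. If instead $\mu$ is atomless, the cumulative map $h(x)=\mu([x_0,x))$ (for a fixed basepoint $x_0$) is a continuous monotone degree-one map that semiconjugates $\alpha$ to an action by rotations. The associated rotation-number homomorphism $G\to \R/\Z$ is trivial because $G$ is perfect, so $h\circ \alpha(g)=h$ for every $g\in G$. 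Since $h$ is non-constant and its non-degenerate fibres are only countably many (they are the images of the countably many complementary gaps of $\mathrm{supp}\,\mu$), there is a value $t$ with $h^{-1}(t)=\{p\}$ a single point; the relation $h\circ\alpha(g)=h$ then forces $\alpha(g)(p)=p$ for all $g$, so $p$ is the desired global fixed point.

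The conceptually delicate point, and the step I expect to require the most care, is precisely this last passage from an invariant measure to an honest fixed point: a semiconjugacy by itself only produces a fixed point for the conjugated rotation action, not for $\alpha$. This is why the dichotomy between atomic and atomless $\mu$ is needed, and why the absence of \emph{finite quotients} (and not merely perfectness) is essential in the atomic case. Checking that the non-degenerate fibres of $h$ are countable, so that a singleton fibre exists, is the main remaining verification, but it is routine.
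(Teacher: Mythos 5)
Your argument is correct, and its first half is identical to the paper's: Margulis's alternative together with the absence of non-abelian free subgroups (Lemma \ref{l-stab-prod}) produces an invariant probability measure $\mu$, and perfectness forces the associated rotation-number homomorphism to $\R/\Z$ to vanish. The two proofs differ only in how the invariant measure is upgraded to a global fixed point. The paper does this in one stroke, with no case distinction: since $\mathsf{rot}(g)=\mu[\vartheta,g(\vartheta))=0$ for every $g$, each $g$ fixes a point, and the standard fact that an invariant measure for a homeomorphism with a fixed point is supported on its fixed-point set shows that any point of the support of $\mu$ is a common fixed point. Your finish via the dichotomy atomic/atomless is a valid substitute: the atomic case is handled by the absence of non-trivial finite quotients (which Lemma \ref{l-stab-prod} does supply, and which is genuinely needed there, since perfectness alone would not rule out a finite simple image in $\mathrm{Sym}(A)$), and in the atomless case your singleton-fibre argument is sound because the non-degenerate fibres of $h$ are exactly the closures of the at most countably many gaps of the support of $\mu$, so uncountably many fibres are points. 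The paper's version is shorter and uses only perfectness in the second half, but both routes are complete.
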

\begin{proof}
By Lemma \ref{l-stab-prod}, the group $\Tsf(\varphi)_y^0$ does not contain non-abelian free subgroups. Thus, a result of Margulis \cite{Marg} implies that every action of the group $\Tsf(\varphi)_y^0$ on the circle preserves a Borel probability measure~$\mu$.
Therefore, the rotation number $g\in \Tsf(\varphi)_y^0\mapsto \mathsf{rot}(g)=\mu[\vartheta,g(\vartheta))\pmod{\Z}$ defines a group homomorphism to $\R/\Z$ (which does not depend on the choice of $\vartheta\in \Sbb^1$).
By Lemma~\ref{l-stab-prod}, the only possibility is that the image of $\mathsf{rot}$ is~$\{0\}$. Therefore every $g\in \Tsf(\varphi)_y^0$ fixes a point, and $\mu$ must be supported on fixed points of $g$, for every $g\in \Tsf(\varphi)_y^0$. Thus  there exists a global fixed point (any point in the support of~$\mu$).
\end{proof}

To conclude the proof of Theorem \ref{t-circle} we will work in the \emph{Chabauty space} of subgroups of $\Tsf(\varphi)$, by adapting arguments from \cite[\S4.1.4]{LB-MB-subdyn}. 
For every group $G$, we denote by $\sub(G)$ the set of subgroups of $G$. The \emph{Chabauty topology} on $\sub(G)$ is  the topology induced  by the inclusion of $\sub(G)$ into the set $\{0, 1\}^G$ of all subsets of $G$, endowed with the product topology, and this makes $\sub(G)$ a compact space.  Given a compact space $Z$, a map $H:Z\to \sub(G)$ is said to be upper (respectively lower) semi-continuous if for every net $(z_i)_i$ converging to a limit $z\in Z$, every cluster point $K$ of $\left (H({z_i})\right )_i$ in $\sub(G)$ verifies $K\le H(z)$ (respectively $H(z)\le K$).  If $G$ acts on $Z$ by homeomorphisms, the stabiliser map $z\mapsto G_z$ is always upper semi-continuous, while the germ-stabiliser map $z\mapsto G^0_z$ is always lower semi-continuous.

\begin{proof}[Proof of Theorem \ref{t-circle}] Let $(X, \varphi)$ be a minimal Stone system, with $X$ infinite. 
Consider an action of the group $\Tsf(\varphi)$ on $\Sbb^1$.  If it has a finite orbit, by simplicity of the group $\Tsf(\varphi)$ (Theorem \ref{t-simple}, although Corollary \ref{c-perfect} is enough) we deduce that it has a fixed point, and the conclusion follows. So we assume that the action does not have finite orbits. By general properties of groups of circle homeomorphisms,  every group action on the circle without finite orbit is either minimal or semi-conjugate to a minimal action (see \cite[Propositions 5.6 and 5.8]{Ghy-ens}). We can therefore assume that the action of $\Tsf(\varphi)$ on $\Sbb^1$ is minimal and show that this leads to a contradiction.
\begin{claim}
Assume that $\Tsf(\varphi)$ acts minimally on $\Sbb^1$. For every $\vartheta\in \Sbb^1$ there exists a unique point $q(\vartheta)\in Y^\varphi$ such that $\Tsf(\varphi)^0_{q(\vartheta)}\le \Tsf(\varphi)_\vartheta$. Moreover, the map $q\colon \Sbb^1\to Y^\varphi$ defined in this way is equivariant and continuous. 
\end{claim}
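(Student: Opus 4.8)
The plan is to encode the desired point $q(\vartheta)$ through fixed-point sets of germ-stabilisers, and to split the statement into uniqueness, existence, and regularity. For $y \in Y^\varphi$ I write $F_y := \Fix_{\Sbb^1}(\Tsf(\varphi)_y^0)$ for the set of points of $\Sbb^1$ fixed by the germ-stabiliser of $y$; by Lemma \ref{l-stab-fix} each $F_y$ is non-empty, and the condition $\Tsf(\varphi)_{y}^0 \le \Tsf(\varphi)_\vartheta$ that should define $q(\vartheta)=y$ reads simply $\vartheta \in F_y$. Note that the family $(F_y)$ is equivariant: since $\Tsf(\varphi)_{gy}^0 = g\,\Tsf(\varphi)_y^0\, g^{-1}$, we have $g\cdot F_y = F_{gy}$ for all $g\in\Tsf(\varphi)$. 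The whole statement will then follow from one geometric fact about the graph of the family $(F_y)$.

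Uniqueness is the quickest step, so I would dispatch it first. Suppose $\vartheta \in F_{y}\cap F_{z}$ for two distinct points $y\neq z$ of $Y^\varphi$. Then both $\Tsf(\varphi)_y^0$ and $\Tsf(\varphi)_z^0$ fix $\vartheta$; but by the generation Lemma \ref{l-stab-generate} (applicable since an infinite minimal system has no finite orbits) these two subgroups generate all of $\Tsf(\varphi)$, so $\vartheta$ would be a global fixed point. A minimal action on $\Sbb^1$ has no global fixed point, a contradiction. Hence for each $\vartheta$ there is at most one $y$ with $\vartheta\in F_y$.

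Existence is the heart of the argument and the step I expect to be the main obstacle. The key observation is that the set $R=\{(y,\vartheta)\in Y^\varphi\times\Sbb^1 : \vartheta\in F_y\}$ is closed; this is the concrete manifestation of lower semicontinuity of the germ-stabiliser map. Indeed, if $(y_i,\vartheta_i)\to(y,\vartheta)$ with $\vartheta_i\in F_{y_i}$, then for any fixed $g\in\Tsf(\varphi)_y^0$ the element $g$ fixes pointwise an open neighbourhood of $y$; as $y_i\to y$, eventually $y_i$ lies in this neighbourhood, so $g\in\Tsf(\varphi)_{y_i}^0$ and thus $g\vartheta_i=\vartheta_i$ for large $i$, giving $g\vartheta=\vartheta$ in the limit. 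As $g$ was arbitrary, $\vartheta\in F_y$. I would then project $R$ to the circle: since $Y^\varphi$ is compact, $\pi_{\Sbb^1}(R)=\bigcup_y F_y$ is closed; it is non-empty because each $F_y\neq\emptyset$, and it is $\Tsf(\varphi)$-invariant by the equivariance $g\cdot F_y=F_{gy}$. By minimality of the circle action, a non-empty closed invariant set must be everything, so $\bigcup_y F_y=\Sbb^1$, producing for every $\vartheta$ a (by the previous step unique) $y=q(\vartheta)$ with $\vartheta\in F_y$. The delicate point is precisely the interplay of the three inputs—non-emptiness of the $F_y$ (Lemma \ref{l-stab-fix}), compactness of the suspension $Y^\varphi$ to close the projection, and minimality of the circle action—and it is here that the non-compactness of the line is genuinely circumvented by working with the compact space $Y^\varphi$.

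Finally, equivariance and continuity follow formally. Equivariance is immediate: if $\vartheta\in F_{q(\vartheta)}$ then $g\vartheta\in g\cdot F_{q(\vartheta)}=F_{g\,q(\vartheta)}$, so by uniqueness $q(g\vartheta)=g\,q(\vartheta)$. For continuity, let $\vartheta_i\to\vartheta$; by compactness of $Y^\varphi$ it suffices to show that any subnet limit $y'$ of $q(\vartheta_i)$ equals $q(\vartheta)$. Along such a subnet $(q(\vartheta_i),\vartheta_i)\to(y',\vartheta)$ with $\vartheta_i\in F_{q(\vartheta_i)}$, so closedness of $R$ gives $\vartheta\in F_{y'}$, and uniqueness forces $y'=q(\vartheta)$. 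Thus $q$ is continuous, which completes the proof of the Claim.
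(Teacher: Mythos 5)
Your proof is correct, and all three steps (uniqueness, existence, continuity) rest on the same pillars as the paper's argument: Lemma \ref{l-stab-generate} plus minimality for uniqueness, Lemma \ref{l-stab-fix} for non-emptiness, and the lower semi-continuity of germ-stabilisers combined with compactness of $Y^\varphi$ and minimality of the circle action for the rest. The difference is in how the existence step is packaged. The paper fixes one pair $(y_0,\vartheta_0)$ with $\Tsf(\varphi)^0_{y_0}\le\Tsf(\varphi)_{\vartheta_0}$, uses minimality to find a net $(g_i)$ pushing $\vartheta_0$ to an arbitrary $\vartheta$, and then passes to the limit in the Chabauty space $\sub(\Tsf(\varphi))$, invoking upper semi-continuity of $\vartheta\mapsto \Tsf(\varphi)_\vartheta$ and lower semi-continuity of $y\mapsto\Tsf(\varphi)^0_y$. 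You instead consider the incidence relation $R=\{(y,\vartheta):\vartheta\in \Fix(\Tsf(\varphi)^0_y)\}$, prove it is closed directly (which is exactly the semi-continuity input, but stated without any reference to $\sub(G)$), and conclude by projecting along the compact factor $Y^\varphi$: the image is a non-empty closed invariant subset of $\Sbb^1$, hence everything by minimality. This buys a more self-contained and arguably more elementary existence proof --- no Chabauty topology, no extraction of convergent nets of subgroups --- and it makes the continuity step a one-line consequence of closedness of $R$ plus uniqueness, rather than a separate semi-continuity argument. The paper's formulation has the advantage of fitting into the general Chabauty-space machinery of \cite{LB-MB-subdyn}, which it explicitly adapts, but your route reaches the same conclusion with less overhead.
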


Before proving the claim, let us observe that it leads to the desired contradiction. Indeed the image $q(\Sbb^1)\subset Y^\varphi$ would be a compact (hence closed) $\Tsf(\varphi)$-invariant subset of $Y^\varphi$, which by minimality (Lemma~\ref{l-minimal}) would be equal to the whole space $Y^\varphi$. But  the space $Y^\varphi$ is not path-connected, and therefore cannot be a continuous image of $\Sbb^1$, reaching a contradiction.
We now prove the claim.
\begin{proof}[Proof of Claim]
Let us first observe that if the point $q(\vartheta)$ as in the statement exists, it must be unique. Indeed if $y\neq y'\in Y^\varphi$ are such that $\Tsf(\varphi)^0_y$ and $\Tsf(\varphi)_{y'}^0$ fix the point $\vartheta$, then by Lemma~\ref{l-stab-generate} we deduce that the whole group $\Tsf(\varphi)$ fixes $\vartheta$, contradicting minimality of the action on $\Sbb^1$. 

Let us now show the existence of the point $q(\vartheta)$. To this end fix $y_0\in Y^\varphi$, and let $\vartheta_0\in \Sbb^1$ be a point fixed by $\Tsf(\varphi)^0_{y_0}$ (which exists by Lemma \ref{l-stab-fix}), so that $\Tsf(\varphi)^0_{y_0}\le \Tsf(\varphi)_{\vartheta_0}$. Let $\vartheta \in \Sbb^1$ be an arbitrary point. By minimality, and upon extracting a subnet, we can choose a net $\left (g_i\right )_i\subset \Tsf(\varphi)$ such that: 
\begin{itemize}
	\item $(g_i(\vartheta_0))_i\subset \T$ converges to $\vartheta$,
	\item $\left (g_i(y_0)\right )_i\subset Y^\varphi$ converges to a limit $y\in Y^\varphi$,
	\item the nets $\left (\Tsf(\varphi)_{g_i(y_0)}^0\right )_i$ and $\left (\Tsf(\varphi)_{g_i(\vartheta_0)}\right )_i$ both converge   in $\sub(\Tsf(\varphi))$ to  limit subgroups denoted respectively by $H$ and $K$.
\end{itemize}
Since
\[\Tsf(\varphi)_{g_i(y_0)}^0= g_i\Tsf(\varphi)_{y_0}^0g_i^{-1}\le  g_i\Tsf(\varphi)_{\vartheta_0}g_i^{-1}=\Tsf(\varphi)_{g_i(\vartheta_0)},\]
we have $H\le K$, and by lower semi-continuity of the germ-stabiliser map and upper semi-continuity of the stabiliser  map we obtain $\Tsf(\varphi)_y^0\le H\le K\le \Tsf(\varphi)_\vartheta$. Thus $q(\vartheta):=y$ verifies the desired conclusion.

Equivariance of the map $q$ is clear. Let us show that it is continuous. Assume that $\left (\vartheta_i\right )_i\subset \Sbb^1$ is a net converging to  a limit $\vartheta\in \Sbb^1$. Let $y\in Y^\varphi$ be a cluster point of the net $\left (q(\vartheta_i)\right )_i$.  We have $\Tsf(\varphi)_{q(\vartheta_i)}^0\le \Tsf(\varphi)_{\vartheta_i}$, and using semi-continuity exactly as in the proof of existence of $q(\vartheta)$ we obtain that $\Tsf(\varphi)_y^0\le \Tsf(\varphi)_\vartheta$. By uniqueness of $q(\vartheta)$ we must have $y=q(\vartheta)$. Since~$y$ was an arbitrary cluster point of $\left (q(\vartheta_i)\right )_i$, we deduce that the net $\left(q(\vartheta_i)\right)_i$ converges to $q(\vartheta)$, showing continuity of the the map $q$.
\end{proof}
This concludes the proof of the theorem.
\end{proof}

\section{Central extensions of Thompson's group $T$}\label{s-LO}

In this section we explain that the group $\Tsf(\varphi)$ always contains subgroups isomorphic to a central extension of Thompson's group $T$ acting on the circle. In particular it always contains free subgroups, and is therefore non-amenable.

Let  $\widetilde{T}\le \Homeo_+(\R)$ be the natural lift of Thompson's group $T$ to the real line under the covering map $\R\to \R/\Z=\T$.   Explicitly, $\widetilde{T}$ consists of all   homeomorphisms of $\R$ which commute with integer translations and  are PL dyadic  with a (possibly infinite) discrete closed set of breakpoints for the derivative. It is a non-trivial central extension of Thompson's group $T$:
\[
0\to\Z\to\widetilde T\to T\to 1.
\]
   For each $k\ge 1$, let $T^{(k)}$ be the quotient of $\widetilde{T}$ by the subgroup consisting of translations by a multiple of $k$. Note that $T^{(k)}$ identifies with the lift of $T$ under  the $k$-fold self-covering of the circle $\T\to \T, x\mapsto kx$. 
   We have the following.
 
\begin{prop}\label{p-nonamenable} Let $(X, \varphi)$ be a Stone system. 
If $\varphi$ has infinite order, then $\Tsf(\varphi)$ contains a subgroup isomorphic to the group  $\widetilde T$. If $\varphi$ has  order  $k\ge 1$, the group $\Tsf(\varphi)$ contains a subgroup isomorphic to $T^{(k)}$. 
\end{prop}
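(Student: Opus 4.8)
The plan is to realise every element of $\widetilde T$ as a homeomorphism of $Y^\varphi$ that acts uniformly in the flow direction and trivially in the transverse direction. Concretely, for $f\in\widetilde T$ I would set $g_f\colon Y^\varphi\to Y^\varphi$, $g_f([x,t])=[x,f(t)]$. The first thing to verify is that this is well defined on the quotient $Y^\varphi=(X\times\R)/\Z$. Since $[x,t]=[\varphi^n(x),t-n]$, well-definedness amounts to the identity $f(t-n)=f(t)-n$ for all $n\in\Z$, which is exactly the requirement that $f$ commute with integer translations, i.e.\ the defining property of $\widetilde T$. This is the conceptual heart of the construction: the relation defining the suspension is invariant under $\widetilde T$ precisely because $\widetilde T$ is the lift of $T$ to the universal cover of the circle.

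Next I would check that $g_f\in\Tsf(\varphi)$. Writing $f(t)=t+u(t)$, the function $u$ is continuous and $1$-periodic, so $\tau([x,t]):=u(t)$ descends to a continuous function on $Y^\varphi$ (it factors through $\eta$), and $g_f(y)=\Phi^{\tau(y)}(y)$; by the characterisation of $\Hsf_0(\varphi)$ recalled in the introduction this gives $g_f\in\Hsf_0(\varphi)$. For the local form demanded by Definition \ref{d-group}, around any $y=[x,t]$ I would pick a dyadic interval $J\ni t$ small enough that $|J|<1$ and $|f(J)|<1$; on the bounded interval $\overline J$ the discrete breakpoint set of $f$ is finite, and since $f$ is dyadic PL it sends the dyadic endpoints of $J$ to dyadic points, so $f(J)$ is again a dyadic interval and $f\colon J\to f(J)$ is a dyadic PL homeomorphism. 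For any clopen $C\ni x$ the chart $\pi_{C,J}$ is then valid and $g_f$ reads $(x,t)\mapsto(x,f(t))$ in these coordinates, which is exactly the membership condition for $\Tsf(\varphi)$. Finally $f\mapsto g_f$ is a homomorphism, since $g_f\circ g_{f'}([x,t])=[x,f(f'(t))]=g_{f\circ f'}([x,t])$.

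The crux is then the kernel computation, which is where the order of $\varphi$ enters. Suppose $g_f=\id$, i.e.\ $[x,f(t)]=[x,t]$ for all $(x,t)$. By the suspension relation, for each $(x,t)$ there is $n\in\Z$ with $f(t)=t+n$ and $\varphi^{n}(x)=x$; the first equation forces $n=f(t)-t$, so $t\mapsto f(t)-t$ takes integer values, and being continuous it is a constant $n_0\in\Z$. Thus $f$ is the integer translation $t\mapsto t+n_0$, and the second equation then reads $\varphi^{n_0}=\id$ (over all $x$). Hence the kernel is exactly the group of integer translations $\{\,t\mapsto t+n_0 : \varphi^{n_0}=\id\,\}$. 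I would conclude by the two cases: if $\varphi$ has infinite order, then $\varphi^{n_0}=\id$ forces $n_0=0$, the kernel is trivial, and $f\mapsto g_f$ embeds $\widetilde T$ into $\Tsf(\varphi)$; if $\varphi$ has order $k$, then $\varphi^{n_0}=\id$ is equivalent to $k\mid n_0$, the kernel is the subgroup of translations by multiples of $k$, and the first isomorphism theorem identifies the image with $T^{(k)}$, by the very definition of $T^{(k)}$ as this quotient of $\widetilde T$.

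I expect the only delicate point to be the kernel computation, where one must carefully unwind the equivalence relation $[x,t]=[\varphi^n(x),t-n]$ to separate the $\R$-direction (forcing $f$ to be an integer translation) from the $X$-direction (forcing $\varphi^{n_0}=\id$); the remaining verifications are routine. As a byproduct, the same construction applied to the dyadic translations of $\R$ — which lie in $\widetilde T$ — produces the translations $[x,s]\mapsto[x,s+r]$, $r$ dyadic, used in the proof of Lemma \ref{l-minimal}.
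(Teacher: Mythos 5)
Your proposal is correct and follows essentially the same route as the paper: let $\widetilde T$ act diagonally on $X\times\R$ (trivially on $X$, naturally on $\R$), observe that commutation with integer translations makes this descend to the suspension, and identify the kernel with the translations $t\mapsto t+n_0$ for which $\varphi^{n_0}=\id$. Your direct unwinding of the equivalence relation for the kernel, and your explicit verification of the local dyadic chart condition of Definition~\ref{d-group}, are just more detailed versions of what the paper leaves implicit.
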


\begin{proof}Let $\widetilde{T}$ act on $X\times \R$ diagonally with trivial action on $X$ and by its natural action on $\R$. This action commutes with the diagonal $\Z$-action on $X\times \R$ given by $n\cdot (x, t)\mapsto (\varphi^n(x), t-n)$, and thus passes to the quotient to an action on the suspension  $Y^\varphi$, which gives rise to a homomorphism $\widetilde{T}\to \Tsf(\varphi)$. Moreover, given an orbit $\cO\subset Y^\varphi$ of the suspension flow $\Phi$, the action of $\widetilde{T}$ on $\cO$ is faithful  unless $\cO$ is periodic, and in this case its kernel consists exactly of the subgroup of translations $k\Z$, where $k$ is the period of $\cO$. It follows  that if $\varphi$ is periodic of period $k$, the homomorphism $\widetilde{T}\to \PL(\varphi)$ factors through an injective homomorphism $T^{(k)}\to \PL(\varphi)$. Otherwise  $\varphi$ admits infinite orbits or finite orbits of arbitrarily large period, so that the homomorphism $\widetilde{T}\to \PL(\varphi)$ is injective.
\end{proof}
\begin{cor} \label{c-free-subgroups}
For every Stone system $(X, \varphi)$, the group $\Tsf(\varphi)$ contains free subgroups. 
\end{cor}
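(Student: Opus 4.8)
The plan is to read the corollary directly off Proposition \ref{p-nonamenable}. That proposition already embeds into $\Tsf(\varphi)$ a copy of $\widetilde{T}$ (when $\varphi$ has infinite order) or of $T^{(k)}$ (when $\varphi$ has finite order $k$), and in either case this subgroup is a central extension of Thompson's group $T$ with cyclic kernel: writing $\pi$ for the quotient map, one has a short exact sequence
\[
0 \to A \to \widetilde{T} \to T \to 1, \qquad A \le \Z,
\]
with $A=\Z$ in the infinite-order case and $A=\Z/k$ in the order-$k$ case (for $k=1$ the group is $T$ itself). So it suffices to exhibit a non-abelian free subgroup inside such a central extension, and for this I would first locate one downstairs in $T$ and then lift it.

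First I would recall the classical fact that $T$ contains non-abelian free subgroups. This is in sharp contrast with $F$, which contains none (Brin--Squier \cite{Br-Sq}, already invoked above), but it holds for the larger group $T$ by a standard ping-pong argument for the natural action on $\Sbb^1$: one chooses two dyadic $\PL$ circle homeomorphisms in $T$ exhibiting attracting/repelling dynamics on two disjoint configurations of arcs, and passes to sufficiently high powers to verify the ping-pong criterion, producing a copy of the free group of rank two. I would cite this as well known rather than grind through the dynamics.

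Finally I would lift. Let $\Lambda\le T$ be free of rank two. Its preimage $\pi^{-1}(\Lambda)$ is a central extension of $\Lambda$ by $A$. Since $\Lambda$ is free it has cohomological dimension at most one, so $H^2(\Lambda; A)=0$ for every coefficient module, and the extension splits; a section gives an injective homomorphism $\Lambda\to \pi^{-1}(\Lambda)$, hence a subgroup isomorphic to $\Lambda$. Therefore $\widetilde{T}$ (respectively $T^{(k)}$), and a fortiori $\Tsf(\varphi)$, contains a non-abelian free subgroup. I do not expect a genuine obstacle here: the only external input is the existence of free subgroups in $T$, and the splitting of a central extension over a free group is routine homological algebra. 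The corollary is thus essentially immediate from Proposition \ref{p-nonamenable}.
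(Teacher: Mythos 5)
Your proposal is correct and follows essentially the same route as the paper: the paper's proof likewise reduces to Proposition \ref{p-nonamenable} together with the fact that $T$ contains non-abelian free subgroups and that this property passes to extensions (the paper leaves the lifting step implicit, where you make it explicit via the splitting of central extensions over free groups). No gaps.
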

\begin{proof}
The group $T$ contains free subgroups, therefore so do its  extensions $\widetilde{T}$ and $T^{(k)}$ for  $k\ge 1$. We conclude by Proposition \ref{p-nonamenable}.
\end{proof}

We next prove Theorem~\ref{t:tilde-T}, which states that every action of the central extension $\widetilde T$ on $\R$, without global fixed point, is semi-conjugate to the standard action.
Before this, let us clarify what is a semi-conjugacy in this setting (in particular, for us a semi-conjugacy can reverse the orientation, cf.~the definitions in \cite{KKM}):

\begin{dfn}
	Let $\rho_\nu:G\to \Homeo_+(\R)$, $\nu=1,2$, be two representations. They are \emph{semi-conjugate} if the following holds: there exists a weakly monotone proper map $h:\R\to \R$ 
	such that
	\[
	h\,\rho_1(g)=\rho_2(g)\,h,\quad\text{for any }g\in G.
	\]
\end{dfn}

We also need the analogue notion for actions on the circle.
\begin{dfn}
	Let $\rho_\nu:G\to \Homeo_+(\T)$, $\nu=1,2$, be two representations. They are \emph{semi-conjugate} if the following holds: there exist
	\begin{itemize}
		\item a weakly monotone map $h:\R\to \R$ either commuting or anti-commuting with the integer translations (this depends on the monotonicity of $h$) and
		\item two corresponding central lifts $\widehat{\rho}_\nu:\widehat{G}\to \Homeo_{\Z}(\R)$  to homeomorphisms of the real line commuting with integer translations,
	\end{itemize}
	such that
	\[
	h\,\widehat{\rho}_1(\widehat{g})=\widehat{\rho}_2(\widehat{g})\,h,\quad\text{for any }\widehat{g}\in \widehat{G}.
	\]
\end{dfn}

\begin{rem}
Both for actions on the real line and on the circle, if a representation $\rho_1$ is semi-conjugate to a \emph{minimal} representation $\rho_2$, then the map $h$ realizing the semi-conjugacy is automatically continuous and surjective. (This is forced by the fact that the image of $h$ must be dense, together with the monotonicity of $h$). Here we will only use the notion of semi-conjugacy in this case. 
\end{rem}

The following result appears in \cite{Ghy-ens, LB-MB-subdyn}:

\begin{thm}\label{t-TonS1}
	Every non-trivial action of Thompson's group $T$ on the circle is semi-conjugate to the standard action.
\end{thm}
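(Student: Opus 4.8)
The plan is to follow the germ-stabiliser/Chabauty strategy of \cite{LB-MB-subdyn} --- the same mechanism used in the proof of Theorem \ref{t-circle}, with the standard action of $T$ on the circle $\T=\R/\Z$ now playing the structural role that $Y^\varphi$ plays there. First I would reduce to a faithful minimal action. Since $T$ is simple, a non-trivial action on $\Sbb^1$ is faithful, and since $T$ is perfect it preserves the orientation. As $T$ contains torsion it is not left-orderable, so it admits no faithful orientation-preserving action on $\R$; hence the action on $\Sbb^1$ can have no global fixed point (otherwise restricting to the complementary arc $\cong\R$ would produce one) and, more generally, no finite orbit (an orientation-preserving action on a finite orbit factors through a cyclic group, hence is trivial by perfectness, producing a fixed point). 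By the dichotomy for circle actions without finite orbits \cite[Propositions 5.6 and 5.8]{Ghy-ens}, such an action is minimal or semi-conjugate to a minimal one, so I may assume it is minimal and faithful.

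Next I would record two structural facts about the \emph{standard} action of $T$ on $\T$, parallel to Lemmas \ref{l-stab-prod}--\ref{l-stab-generate}. For $\xi\in\T$, the germ-stabiliser $T^0_\xi$ of the standard action is isomorphic to the group of compactly supported dyadic PL homeomorphisms of the open arc $\T\setminus\{\xi\}\cong\R$; by \eqref{e:derivedsub} every such element lies in some copy of $F'$, so $T^0_\xi$ is a directed union of copies of the simple group $F'$, whence it is perfect, has no non-trivial finite quotient, and by Brin--Squier \cite{Br-Sq} contains no non-abelian free subgroup. Exactly as in the proof of Lemma \ref{l-stab-fix}, Margulis' alternative \cite{Marg} then forces every action of $T^0_\xi$ on a circle to preserve a probability measure with vanishing rotation number, hence to have a global fixed point. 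Secondly, a first-return argument as in Lemma \ref{l-stab-generate} shows that for any two distinct points $\xi\neq\xi'$ the group $T$ is generated by $T^0_\xi$ and $T^0_{\xi'}$.

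With these in hand I would construct an equivariant map $q\colon\Sbb^1\to\T$ by the Chabauty-limit method of the claim in the proof of Theorem \ref{t-circle}. Fixing $\xi_0\in\T$, the fixed-point property gives $\vartheta_0\in\Sbb^1$ with $T^0_{\xi_0}\le T_{\vartheta_0}$; for arbitrary $\vartheta\in\Sbb^1$, minimality of the $\Sbb^1$-action provides a net $(g_i)$ with $g_i(\vartheta_0)\to\vartheta$ and $g_i(\xi_0)\to\xi$, and lower semi-continuity of the germ-stabiliser map together with upper semi-continuity of the stabiliser map yields $T^0_\xi\le T_\vartheta$; I set $q(\vartheta):=\xi$. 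The generation statement makes $q(\vartheta)$ unique (two distinct candidates would force all of $T$ to fix $\vartheta$, against minimality), and the same semi-continuity argument gives continuity, while equivariance is immediate. Since the standard action is minimal, the compact invariant set $q(\Sbb^1)$ equals $\T$, so $q$ is a continuous equivariant surjection.

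It remains to upgrade $q$ to a semi-conjugacy in the sense of the definition preceding the statement, and this is the step I expect to be the main obstacle --- and the essential difference from Theorem \ref{t-circle}, where one instead obtains a contradiction from non-path-connectedness of $Y^\varphi$. Here the target $\T$ is itself a circle, so no contradiction arises; instead I must show that $q$ is weakly monotone of degree one, i.e.\ that it preserves the cyclic orientation. This amounts to checking that the fibres $q^{-1}(\xi)$ are connected, which I would deduce from minimality of the standard action together with equivariance (a disconnected fibre, once its gaps are spread densely around the circle by the group, would be incompatible with the local dynamics pinned down by $T^0_\xi\le T_\vartheta$). Once $q$ is known to be weakly monotone, lifting it to a non-decreasing map $\R\to\R$ commuting with the integer translations provides the map $h$ realising a semi-conjugacy between the given action and the minimal standard action, as required.
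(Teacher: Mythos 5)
First, note that the paper does not actually prove Theorem \ref{t-TonS1}: it is quoted from \cite{Ghy-ens, LB-MB-subdyn}, so there is no internal proof to compare against. Your reconstruction follows the route of \cite{LB-MB-subdyn}, i.e.\ the same Chabauty/germ-stabiliser machinery that the paper deploys for Theorem \ref{t-circle}, and everything up to the construction of the continuous equivariant surjection $q\colon \Sbb^1\to\T$ is sound: the reduction to a faithful minimal orientation-preserving action, the identification of $T^0_\xi$ with a directed union of copies of $F'$ (hence perfect, without free subgroups, and with a fixed point in any circle action via Margulis plus the rotation number), and the semi-continuity argument producing $q$ with $q^{-1}(\xi)=\Fix(T^0_\xi)$. (One small caveat: the two-factor decomposition $T=T^0_\xi T^0_{\xi'}\cup T^0_{\xi'}T^0_\xi$ of Lemma \ref{l-stab-generate} fails verbatim on the circle for elements swapping $\xi$ and $\xi'$ — such a $g$ cannot lie in either product — but three factors suffice, so the generation statement you actually use is correct.)

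The genuine gap is exactly where you flag it, and it is more serious than your sketch suggests: the connectedness of the fibres of $q$ cannot follow from ``minimality plus equivariance plus the local dynamics pinned down by $T^0_\xi\le T_\vartheta$''. To see why, consider the hypothetical degree-two equivariant cover of the standard action: there the induced map $q$ would be the covering map, with two-point fibres, yet every local and germ-theoretic property you invoke (minimality of both actions, equivariance, $T^0_\xi\le T_\vartheta$ for each $\vartheta$ in the fibre, density of translated gaps) would hold. What rules this configuration out is a \emph{global} input specific to $T$: for instance, the rotation number of the order-two element $x\mapsto x+1/2$ would have to satisfy $2\alpha\equiv 1/2 \pmod 1$, which is incompatible with $\alpha\in\{0,1/2\}$ forced by finite order; equivalently, the Euler class of the standard action is primitive, so the action does not lift to finite covers (this is why $T^{(k)}$ is a non-split extension). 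Any correct completion must therefore bring in the torsion elements and their rotation numbers (Ghys's route via the bounded Euler class in \cite{Ghy-ens}) or an equivalent global rigidity statement as in \cite{LB-MB-subdyn}; the soft argument you propose would ``prove'' too much. This is also the structural reason the paper's own Theorem \ref{t-circle} is easier at the corresponding stage: there the target $Y^\varphi$ is not path-connected, so one gets an outright contradiction and no monotonicity statement is ever needed.
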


Let us observe that it implies the following:
\begin{thm}\label{t:tilde-T}
	Let $\widetilde T$ denote the central extension of Thompson's $T$. Every action of $\widetilde T$ by  homeomorphisms of the real line, without global fixed points, is semi-conjugate to the action lifting the action of $T$ on the circle.
\end{thm}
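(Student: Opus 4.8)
The plan is to use the central element $z \in \widetilde T$ given by the unit translation $x \mapsto x+1$, which generates the kernel $\Z$ of the projection $\widetilde T \to T$ and is central in $\widetilde T$, and to reduce everything to the circle via Theorem~\ref{t-TonS1}. Let $\rho\colon \widetilde T \to \Homeo(\R)$ be an action without global fixed points. Since $\widetilde T$ is perfect (as recalled in the introduction), every one-dimensional action of $\widetilde T$ preserves orientation, so $\rho(\widetilde T) \le \Homeo_+(\R)$ and all the $\rho(g)$ are order-preserving.

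First I would show that $\rho(z)$ acts without fixed points. Suppose instead that $F := \Fix(\rho(z)) \neq \emptyset$. Since $z$ is central, $F$ is a closed $\rho(\widetilde T)$-invariant subset of $\R$, and because $\rho(z)$ fixes $F$ pointwise the restricted action of $\widetilde T$ on $F$ factors through $T = \widetilde T/\langle z\rangle$. Now $F$ is totally ordered and the induced $T$-action on it is order-preserving; an order-preserving self-map $\sigma$ of a totally ordered set with $\sigma^n = \id$ must be the identity (if $\sigma(x) > x$ then $\sigma^n(x) > x$, and symmetrically for $\sigma(x)<x$), so every torsion element of $T$ fixes $F$ pointwise. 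The subgroup of $T$ generated by all torsion elements is normal and non-trivial (for instance $T$ contains the order-two rotation $x\mapsto x+1/2$), hence equals $T$ by simplicity; therefore $T$, and thus $\widetilde T$, fixes $F$ pointwise, contradicting the absence of global fixed points. Hence $\rho(z)$ is fixed-point-free, and being orientation-preserving it is conjugate to a translation.

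Next I would pass to the quotient circle. As $\rho(z)$ is a fixed-point-free element of $\Homeo_+(\R)$, the group $\langle \rho(z)\rangle \cong \Z$ acts freely and properly discontinuously on $\R$, so $\Sbb^1 := \R/\langle\rho(z)\rangle$ is a circle. Since $\rho(\widetilde T)$ commutes with $\rho(z)$, it descends to an action $\bar\rho\colon \widetilde T \to \Homeo_+(\Sbb^1)$ with $z$ in the kernel, hence to an action $\bar\rho_T\colon T \to \Homeo_+(\Sbb^1)$. If $\bar\rho_T$ were trivial, then $\rho(\widetilde T)$ would be contained in the abelian group $\langle \rho(z)\rangle$, forcing $\rho$ to be trivial (as $\widetilde T$ is perfect) and contradicting the hypothesis; so $\bar\rho_T$ is non-trivial. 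By Theorem~\ref{t-TonS1}, $\bar\rho_T$ is semi-conjugate to the standard action of $T$ on the circle.

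Finally I would lift this circle semi-conjugacy to the line. After conjugating so that $\rho(z)$ is the unit translation, $\rho$ is precisely a central lift of $\bar\rho_T$ sending $z$ to $x\mapsto x+1$, and the standard lift action $\rho_{\mathrm{std}}$ of $\widetilde T$ is the corresponding central lift of the standard $T$-action. By the definition of semi-conjugacy of circle actions, the weakly monotone map realizing the semi-conjugacy between $\bar\rho_T$ and the standard action lifts to a weakly monotone proper map $h\colon \R \to \R$ intertwining these two central lifts, that is $h\,\rho(g) = \rho_{\mathrm{std}}(g)\,h$ for all $g \in \widetilde T$. This is exactly a semi-conjugacy between $\rho$ and the standard lift action, as required. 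I expect the main obstacle to be the bookkeeping in this last step: matching the central lifts so that $z$ acts as the unit translation on both sides (so that no translational discrepancy appears when lifting the intertwining relation), and checking that a possible reversal of the translation direction of $\rho(z)$ is harmlessly absorbed by allowing $h$ to be monotone non-increasing.
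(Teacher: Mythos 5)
Your proposal is correct and follows essentially the same route as the paper: show via centrality, simplicity and torsion in $T$ that $\rho(z)$ is fixed-point-free, descend to the circle $\R/\langle\rho(z)\rangle$, apply Theorem~\ref{t-TonS1}, and lift the semi-conjugacy back to the line. The ``bookkeeping'' you flag at the end is resolved in the paper exactly as you anticipate: the lifted intertwiner a priori satisfies $h\,\rho(g)=\gamma^{n_g}\rho_{\mathrm{std}}(g)\,h$ for integers $n_g$ forming a homomorphism $\widetilde T\to\Z$, which vanishes because $\widetilde T$ is perfect.
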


\begin{proof}
	Let $\rho:\widetilde T\to\Homeo_+(\R)$ be an action without global fixed points.
	Let us denote by $\gamma\in \widetilde T$ the central element corresponding to the translation $t\mapsto t+1$ in the standard action, so that $\langle \gamma \rangle$ is the center of $\tilde{T}$. 
	
	\begin{claim}
		The homeomorphism $\rho(\gamma)$ has no fixed point.
	\end{claim}
	
	\begin{proof}[Proof of Claim]
		Assume for contradiction that $Z:=\Fix(\rho(\gamma))\neq \emptyset$. As $\gamma$ is central, the subset $Z\subset \R$ is $\rho(\widetilde T)$-invariant, and the corresponding action of $\widetilde T$ on $Z$ passes to the quotient to an action of $T=\widetilde  T/ \langle \gamma \rangle$ which preserves the order of points of $Z\subset \R$.  As $T$ is simple and contains torsion elements, this implies that the induced action on $Z$ is trivial, that is, $\Fix(\rho(\gamma))=\Fix(\rho(\widetilde T))$. Thus the action has a global fixed point, giving the desired contradiction.
	\end{proof}
	Now after conjugating $\rho$ by an element of $\Homeo_+(\R)$  we can assume that $\rho(\gamma)=\gamma^{\pm 1}$, so that $\rho(\langle \gamma \rangle)= \langle \gamma \rangle$. Then the action $\rho:\widetilde T\to\Homeo_+(\R)$ descends to a non-trivial action \mbox{$\bar{\rho}:T\to \Homeo_+(\T)$}, where $\T=\R/\langle \gamma \rangle$, which by Theorem~\ref{t-TonS1} is semi-conjugate to the standard action. Thus, upon replacing again $\rho$ with a semi-conjugate representation, we can assume that $\rho$ is a homomorphism $\rho\colon \widetilde{T} \to \widetilde{T} \le \Homeo_+(\R)$ such that $\tilde{\rho}(\langle \gamma \rangle)= \langle \gamma \rangle$, and which passes to the quotient to the identity map $T \to T$. This implies that for every $g\in \widetilde T$ we have  $\rho(g)= g \gamma^{n_g}$ for some integer $n_g\in \Z$. 	Using that $\gamma$ is central one readily checks that the map $g\mapsto n_g$ is a homomorphism  $\widetilde{T} \to \Z$, and since $\widetilde T$ is perfect, we must have $n_g=0$ for all $g \in \widetilde T$. Thus $\rho$ is the identity.
\end{proof}

\section{Approximation by subshifts of finite type and lack of Kazhdan subgroups} \label{s-T}

In this  section we prove the non-existence of infinite subgroups with Kazhdan's property $(T)$ in the group $\PL(\varphi)$.  As another consequence of the arguments used, we also observe that for every infinite minimal subshift $(X, \varphi)$ the group $\Tsf(\varphi)$ is not finitely presentable.

Let us set up some extra notation. Given a Stone system $(X,\varphi)$ and a closed subset $Z\subset X$ which is $\varphi$-invariant, we denote by $\varphi\vert_Z$ the restriction of $\varphi$ to $Z$ and by $Y^\varphi_Z\subset Y^\varphi$ the suspension of $(Z, \varphi|_Z)$, which is a closed $\Phi$-invariant subset of $Y^\varphi$.

We begin with two  lemmas that  hold for any Stone system $(X, \varphi)$.

\begin{lem}\label{l:surjection}
Let $(X,\varphi)$ be a Stone system and let $Z\subset X$ be a $\varphi$-invariant closed subset. We let $G$ be either $\PL(\varphi)$ or $\Tsf(\varphi)$ and $G_Z=\PL(\varphi|_Z)$ or $\Tsf(\varphi|_Z)$ accordingly.
The restriction of the action of $G$ to $Y^\varphi_Z$ defines an epimorphism $G\to G_Z$, whose kernel is infinite unless $Z=X$. \end{lem}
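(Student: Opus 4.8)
The plan is to establish the three assertions — that restriction is a well-defined homomorphism $G\to G_Z$, that it is surjective, and that its kernel is infinite when $Z\neq X$ — in that order. For well-definedness, I would first note that every $g\in G$ lies in $\Hsf_0(\varphi)$ and hence preserves each $\Phi$-orbit; since $Y^\varphi_Z$ is $\Phi$-invariant, it is a union of orbits and is therefore preserved by $g$, so $g|_{Y^\varphi_Z}$ is a self-homeomorphism of $Y^\varphi_Z$. To see it lies in $G_Z$, fix $y=[z,s]\in Y^\varphi_Z$ and a dyadic chart $\pi_{C,J}$ of $(X,\varphi)$ in which $g$ is given by $(x,t)\mapsto(x,f(t))$; then $C\cap Z$ is clopen in $Z$, and since the first return time satisfies $\tau_{C\cap Z}\ge\tau_C>|J|$, the chart $\pi_{C\cap Z,J}$ is a valid dyadic chart of $(Z,\varphi|_Z)$ containing $y$ on which $g|_{Y^\varphi_Z}$ acts by the same $(x,t)\mapsto(x,f(t))$. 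As $f$ is dyadic PL when $g\in\Tsf(\varphi)$ and PL when $g\in\PL(\varphi)$, this shows $g|_{Y^\varphi_Z}\in G_Z$, and that $g\mapsto g|_{Y^\varphi_Z}$ is a homomorphism is immediate.

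For surjectivity, the heart of the matter is to extend elements supported in a single domain, and then invoke that such elements generate $G_Z$. So suppose $h\in G_Z$ has closed support inside a single dyadic domain, where it acts by $(x,t)\mapsto(x,f(t))$ for a PL (resp.\ dyadic PL) map $f$ equal to the identity off $J$, on the clopen set $D\subseteq Z$. Writing $D=C^0\cap Z$ for some clopen $C^0\subseteq X$, I would cover $D$ by clopen sets $C_z\ni z$ of $X$ with $C_z\subseteq C^0$ and $\tau_{C_z}>|J|$: this is possible because $|J|<\tau_D$ forces $\varphi^n(z)\neq z$ for every integer $1\le n\le|J|$, so total disconnectedness lets us separate $z$ from its finitely many first iterates by a small clopen neighbourhood. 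By compactness finitely many suffice, and after disjointifying I obtain pairwise disjoint clopen $C_1,\dots,C_p\subseteq C^0$ with $\bigcup_i(C_i\cap Z)=D$ and $\tau_{C_i}>|J|$, so each $\pi_{C_i,J}$ is a valid dyadic chart of $(X,\varphi)$. Letting $g$ act by $(x,t)\mapsto(x,f(t))$ on each disjoint domain $U_{C_i\times J}$ and by the identity elsewhere yields a well-defined element of $G$ (the domains are disjoint and $f$ fixes the endpoints of $J$), whose restriction to $Y^\varphi_Z$ is exactly $h$, since it acts as $\id\times f$ on $U_{D\times J}\cap Y^\varphi_Z$ and trivially on the rest of $Y^\varphi_Z$.

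It then remains to know that $G_Z$ is generated by its single-domain elements. For $G_Z=\Tsf(\varphi|_Z)$ this is precisely Proposition \ref{p-inf-gen} applied to $(Z,\varphi|_Z)$, whose generators $F'_{D,J}$ are exactly single-domain elements already shown to be in the image; for $G_Z=\PL(\varphi|_Z)$ the corresponding fact holds by the analogous fragmentation argument (a $\PL$-version of Proposition \ref{p-inf-gen}, provable by the same compactness-plus-covering scheme underlying Lemmas \ref{l-coverF} and \ref{l-cover}, which uses only one-dimensional features — moving intervals by compactly supported homeomorphisms and the algebraic Intersection-Lemma manipulation — shared by general PL homeomorphisms). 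This reduction is the step I expect to be the main obstacle. The naive attempt to extend $h$ ``all at once'' by lifting each local chart of $Y^\varphi_Z$ fails, because on the overlaps of the lifted charts the local PL maps agree only over $Z$ and are otherwise unconstrained; the remedy is exactly to fragment $h$ into single-domain pieces and extend each piece over \emph{disjoint} clopen columns of $X$, so that consistency off $Z$ is never demanded.

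Finally, for the kernel, an element lies in it precisely when it restricts to the identity on $Y^\varphi_Z$, i.e.\ when its support avoids $Y^\varphi_Z$. If $Z\neq X$, then $X\setminus Z$ is a non-empty open subset of the Stone space $X$, hence contains a non-empty clopen set $C$. For any dyadic interval $J$ with $|J|<1\le\tau_C$, the subgroup $F_{C,J}\le\Tsf(\varphi)\le G$ is supported in $U_{C\times J}$, which is disjoint from $Y^\varphi_Z$ because $C\cap Z=\emptyset$ (a point $[x,t]$ lies in $Y^\varphi_Z$ exactly when $x\in Z$, using $\varphi$-invariance of $Z$). Thus $F_{C,J}\cong F$ is contained in the kernel, which is therefore infinite, completing the plan.
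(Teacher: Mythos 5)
Your overall route coincides with the paper's: restriction is visibly a homomorphism, surjectivity is obtained by lifting the generating subgroups $F'_{D,J}$ given by Proposition \ref{p-inf-gen} from $(Z,\varphi|_Z)$ to $(X,\varphi)$, and the kernel is seen to be infinite by placing a copy of $F$ over a non-empty clopen subset of $X\setminus Z$. Your well-definedness discussion and your kernel argument are correct, and your implicit appeal to a $\PL$-version of Proposition \ref{p-inf-gen} is at the same level of detail as the paper itself.

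There is, however, a genuine flaw in the surjectivity step, located in the parenthetical ``the domains are disjoint''. Pairwise disjointness of the clopen sets $C_1,\dots,C_p\subset C^0$ does \emph{not} imply pairwise disjointness of the domains $U_{C_1\times J},\dots,U_{C_p\times J}$ once $|J|\ge 1$: by the relation $U_{C\times J}=U_{\varphi^n(C)\times (J-n)}$, the domains $U_{C_i\times J}$ and $U_{C_j\times J}$ overlap as soon as some $x\in C_i$ satisfies $\varphi^n(x)\in C_j$ with $0<|n|<|J|$. Your conditions $\tau_{C_i}>|J|$ only forbid returns of each $C_i$ to \emph{itself}; cross-returns from $C_i$ to $C_j$ are excluded for points of $Z$ (since $\tau_D>|J|$ and $Z$ is invariant) but not for points of $C_i\setminus Z$, and they can certainly occur. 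On such an overlap your two local formulas give $[x,t]\mapsto[x,f(t)]$ and $[x,t]=[\varphi^n(x),t-n]\mapsto[\varphi^n(x),f(t-n)]=[x,f(t-n)+n]$, which agree only if $f(t)=f(t-n)+n$; this fails for a general $f\in F'_J$, so the element $g$ you describe need not be well defined. The fix is easy and is essentially what the paper does: it suffices to lift the generators $F'_{D,J}$ with $|J|<1$ (these still generate $G_Z$, by Proposition \ref{p-inf-gen} combined with the Fragmentation Lemma \ref{l-cover}), and for such $J$ a \emph{single} arbitrary clopen set $C^0\subset X$ with $C^0\cap Z=D$ already yields a valid chart $\pi_{C^0,J}$ whose domain meets $Y^\varphi_Z$ exactly in the domain of $\pi_{D,J}$ — no covering, shrinking, or disjointification is needed. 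Alternatively, for a general $J$ one can lift through the single clopen set $E=C^0\setminus\bigcup_{n=1}^{\lfloor |J|\rfloor}\{x\in C^0\colon \varphi^n(x)\in C^0\}$, which still satisfies $E\cap Z=D$ (because $Z$ is invariant and $\tau_D>|J|$) and now has $\tau_E>|J|$, so that $F'_{D,J}$ is the image of $F'_{E,J}$.
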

\begin{proof}
Consider the case $G=\Tsf(\varphi)$.
It is clear that the restriction of the action defines a group homomorphism $\Tsf(\varphi)\to \Tsf(\varphi|_Z)$, and we need to show that it is surjective. Let $C\subset Z$ be  a clopen subset and $J$ be a dyadic interval of length $|J|< 1$. Choosing a clopen subset $D\subset X$ such that $C=D\cap Z$, we see that the subgroup $F'_{C,J}\le \Tsf(\varphi|_Z)$ is the image of the subgroup $F'_{D,J}\le \Tsf(\varphi)$. Therefore the image of $\Tsf(\varphi)$ in $\Tsf(\varphi|_Z)$ contains a generating set of $\Tsf(\varphi|_Z)$.  Finally, taking a clopen subset $C \subset (X\setminus Z)$ and any interval $I$ with $|I|< 1$, we see that $F_{C, I}\le \Tsf(\varphi)$ acts trivially on $Y^\varphi_Z$ and therefore the kernel of the map $\Tsf(\varphi)\to \Tsf(\varphi|_Z)$ is infinite. 

 The proof is identical for $G=\PL(\varphi)$ (one needs to replace the groups $F'_{C,I}$ with groups defined similarly for $\PL(\varphi)$).
\end{proof}
We point out that this implies that minimality is a necessary condition in Theorem \ref{t-simple}.
\begin{cor} \label{c-not-simple}
If the system $(X, \varphi)$ is not minimal, the group $\Tsf(\varphi)$ admits non-trivial proper quotients.
\end{cor}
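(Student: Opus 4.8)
The plan is to deduce Corollary \ref{c-not-simple} directly from Lemma \ref{l:surjection}. If $(X,\varphi)$ is not minimal, then by definition there exists a point $x\in X$ whose orbit is not dense, so its orbit closure $Z:=\overline{\{\varphi^n(x)\colon n\in\Z\}}$ is a non-empty, closed, $\varphi$-invariant proper subset of $X$, that is $\emptyset\neq Z\subsetneq X$. First I would apply Lemma \ref{l:surjection} to this $Z$ with $G=\Tsf(\varphi)$ and $G_Z=\Tsf(\varphi|_Z)$: the restriction of the action to the invariant closed subset $Y^\varphi_Z$ yields an epimorphism $\Tsf(\varphi)\to \Tsf(\varphi|_Z)$ whose kernel $N$ is infinite because $Z\neq X$.

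The kernel $N$ is by construction a normal subgroup of $\Tsf(\varphi)$, and the quotient $\Tsf(\varphi)/N\cong \Tsf(\varphi|_Z)$. To conclude that this gives a non-trivial proper quotient, I must check two things. The quotient is non-trivial since $Z$ is non-empty, so $(Z,\varphi|_Z)$ is a genuine non-empty Stone system and $\Tsf(\varphi|_Z)$ is non-trivial (for instance it contains the copies of Thompson's group $F$ given by the subgroups $F_{C,J}$ for any clopen $C\subseteq Z$). The quotient is proper, equivalently $N$ is non-trivial, which is exactly the assertion that the kernel is infinite, guaranteed by Lemma \ref{l:surjection} precisely because $Z\subsetneq X$.

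Putting these together, $N$ is a non-trivial proper normal subgroup of $\Tsf(\varphi)$, so $\Tsf(\varphi)$ admits the non-trivial proper quotient $\Tsf(\varphi|_Z)$, as claimed.

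The proof is essentially immediate once Lemma \ref{l:surjection} is in hand, so there is no real obstacle; the only point requiring a moment's care is producing the proper $\varphi$-invariant closed subset $Z$ from the failure of minimality (taking an orbit closure of a point with non-dense orbit) and confirming that the target group $\Tsf(\varphi|_Z)$ is indeed non-trivial, both of which are routine.
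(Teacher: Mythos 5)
Your proof is correct and is exactly the deduction the paper intends: the corollary is stated immediately after Lemma \ref{l:surjection} as a direct consequence, obtained by applying that lemma to a proper non-empty closed $\varphi$-invariant subset (e.g.\ the orbit closure of a point with non-dense orbit). Your added checks that the kernel is non-trivial and the target $\Tsf(\varphi|_Z)$ is non-trivial are the right ones and are routine, as you say.
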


\begin{lem} \label{l-limit} Let $(X, \varphi)$ be a Stone system. 
Consider a nested decreasing sequence of $\varphi$-invariant closed subsets $\left (Z_n\right )_n$ of $X$ and let $Z=\bigcap Z_n$.  We let $G_Z$ be either $\PL(\varphi|_Z)$ or $\Tsf(\varphi|_Z)$ and $G_{Z_n}=\PL(\varphi|_{Z_n})$ or $\Tsf(\varphi|_{Z_n})$ accordingly.
Then $G_Z$ is isomorphic to the direct limit $\varinjlim G_{Z_n}$ with respect to the restriction maps.
\end{lem}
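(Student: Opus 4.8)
The plan is to exhibit the natural homomorphism $r\colon \varinjlim G_{Z_n}\to G_Z$ and prove it is an isomorphism. For $m\ge n$ one has $Z_m\subseteq Z_n$, and Lemma~\ref{l:surjection}, applied to the system $(Z_n,\varphi|_{Z_n})$ with invariant closed subset $Z_m$, supplies surjective restriction maps $G_{Z_n}\to G_{Z_m}$; these are the bonding maps of the direct system. Since restriction of actions is transitive, the restriction maps $G_{Z_n}\to G_Z$ are compatible with the bonding maps and thus factor through a homomorphism $r$ out of the direct limit. Surjectivity of $r$ is then immediate, because each restriction $G_{Z_n}\to G_Z$ is already surjective by Lemma~\ref{l:surjection} (with invariant closed subset $Z\subseteq Z_n$), and $r$ agrees with this surjection on the image of $G_{Z_n}$ in the direct limit.

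The heart of the matter, and the main obstacle, is injectivity. Since the kernel of $r$ consists of classes $[g]$ represented by some $g\in G_{Z_n}$ whose restriction to $Y^\varphi_Z$ is the identity, and since $[g]=[\id]$ in the limit exactly when the image of $g$ in some $G_{Z_m}$ is trivial, injectivity reduces to the following claim: \emph{if $g\in G_{Z_n}$ restricts to the identity on $Y^\varphi_Z$, then $g$ restricts to the identity on $Y^\varphi_{Z_m}$ for some $m\ge n$.} To prove this I would combine the local normal form of Definition~\ref{d-group} with compactness. Covering the compact space $Y^\varphi_{Z_n}$ by finitely many dyadic charts $U_{C_i\times J_i}$, $i=1,\dots,k$, on each of which $g$ acts in coordinates by $(x,t)\mapsto(x,f_i(t))$ for a single, $x$-independent PL homeomorphism $f_i$, I would split the charts into two types.

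For a chart with $C_i\cap Z\neq\emptyset$, choosing $w\in C_i\cap Z$ and using that $g$ fixes every point $[w,t]\in Y^\varphi_Z$ gives $[w,f_i(t)]=[w,t]$, hence $f_i(t)=t$ for all $t\in J_i$ (uniqueness of chart coordinates); thus $f_i=\id$ and $g$ is trivial on that whole chart. For a chart with $C_i\cap Z=\emptyset$, I would apply the finite-intersection property to the nested decreasing family of closed sets $C_i\cap Z_m$, whose intersection over $m$ is $C_i\cap Z=\emptyset$, to obtain an index $m_i$ with $C_i\cap Z_{m_i}=\emptyset$. Taking $m=\max_i m_i$ over the finitely many charts of the second type, I claim $g$ is the identity on $Y^\varphi_{Z_m}$: any $y\in Y^\varphi_{Z_m}$ lies in some chart $U_{C_i\times J_i}$, and its chart representative $(x_0,t_0)\in C_i\times J_i$ has $x_0\in Z_m$ because $Z_m$ is $\varphi$-invariant; if that chart were of the second type we would get $x_0\in C_i\cap Z_m=\emptyset$, a contradiction, so the chart is of the first type and $g$ fixes $y$. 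This proves the claim, hence injectivity, and therefore that $r$ is an isomorphism. The argument is identical for $\PL$ and $\Tsf$, since it uses only the local form $(x,t)\mapsto(x,f_i(t))$ and never whether $f_i$ is dyadic.
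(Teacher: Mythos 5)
Your overall route is the same as the paper's: surjectivity comes from Lemma~\ref{l:surjection}, and injectivity is reduced to showing that an element acting trivially on $Y^\varphi_Z$ acts trivially on a whole neighbourhood of $Y^\varphi_Z$, after which compactness (which you run on the sets $C_i\cap Z_m$, where the paper runs it on the sets $Y^\varphi_{Z_m}$ themselves) produces the index $m$. The splitting of charts into two types and the finite-intersection argument for the second type are fine.

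The step that does not hold as written is the deduction, for a chart with $w\in C_i\cap Z$, that $[w,f_i(t)]=[w,t]$ forces $f_i(t)=t$ ``by uniqueness of chart coordinates''. The point $[w,f_i(t)]$ lives in the chart $\pi_{C_i,f_i(J_i)}$, not in $\pi_{C_i,J_i}$, so injectivity of a single chart does not apply; in the suspension one has $[w,s]=[w,t]$ if and only if $s-t$ is an integer multiple of the $\varphi$-period of $w$ (meaning $s=t$ only when $w$ is non-periodic). Thus your argument is complete only when $w$ can be chosen non-periodic; if every point of $C_i\cap Z$ is periodic of period $p$, you only obtain that $f_i$ is a translation by some fixed $kp\in p\Z$, and for $k\neq 0$ the element $g$ is \emph{not} trivial on fibers of the chart over points of $C_i\setminus Z$ whose period does not divide $kp$. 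This is not a removable technicality: let $X=\{*\}\sqcup\bigsqcup_{j\ge 1}O_j$ be the one-point compactification of a disjoint union of $\varphi$-orbits $O_j$ of cardinality $j+1$, with $\varphi(*)=*$, and $Z_m=\{*\}\sqcup\bigsqcup_{j\ge m}O_j$, so that $Z=\{*\}$. The time-one map $\Phi^1$ of the suspension flow belongs to $\Tsf(\varphi)$, restricts to the identity on $Y^\varphi_Z$ (a circle of length $1$), but is non-trivial on every $Y^\varphi_{Z_m}$; its class is therefore a non-trivial element of the kernel of $\varinjlim G_{Z_n}\to G_Z$. You should either assume that every non-empty clopen subset of $Z$ meets a non-periodic orbit (true in the cases where the lemma is actually applied, e.g.\ $Z$ infinite minimal) or treat the periodic charts separately. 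To be fair, the paper's own proof makes exactly the same silent deduction (``every point $y\in Y^\varphi_Z$ is contained in a domain on which $g$ acts trivially''); your more detailed write-up has merely made the implicit difficulty visible.
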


\begin{proof}
Let $G$ be either $\PL(\varphi)$ or $\Tsf(\varphi)$, depending on the choice of $G_Z$.
Fix $g\in G$ which projects trivially to $G_Z$. By Lemma \ref{l:surjection}, this implies that every point  $y\in Y^\varphi_Z$ is contained in a  domain $U_{C\times I}$ on which $g$ acts trivially. Thus $g$ acts trivially on a neighbourhood $U$ of $Y^\varphi_Z$ in $Y^\varphi$. By compactness, for $m$ large enough, we have $Y^\varphi_{Z_m}\subset U$, and therefore the projection of $g$ to $G_{Z_m}$ is eventually trivial. This means exactly that $G_Z$ is the direct limit of the sequence $\left (G_{Z_n}\right )_n$.
\end{proof}
We now restrict to the case where $(X, \varphi)$ is a subshift over a finite alphabet $A$.
Recall that a subshift $X\subset A^\Z$ over a finite alphabet is called a \emph{subshift of finite type} if there exists $n\ge 1$ and a finite set $L\subset A^n$ of the set of words of length $n$ such that $X$ consists exactly of all sequences  whose subwords of length $n$ all belong to   $L$.

Assume that $X\subset A^\Z$ is an arbitrary subshift (not necessarily of finite type). For each $n\ge 1$ let $L_n\subset A^n$ be the collection of finite words of length $n$ that appear in some sequence of $X$, and let  $(X_n, \varphi_n)$ be the  subshift of finite type defined by $L_n$. This provides a nested sequence of subshifts:

\[X_1\supset X_2\supset \cdots \supset X=\bigcap_{n\ge 1} X_n.\]
In this specific setting, we rewrite Lemma \ref{l-limit} for further reference as follows:
\begin{lem}\label{p-direct-limit}
Let $(X, \varphi)$ be a  subshift, and  $\left ((X_n, \varphi_n)\right )_n$ be the sequence of subshifts of finite type defined above. Then the group $\PL(\varphi)$ is the direct limit of the sequence of groups $\left (\PL(\varphi_n)\right )_n$ with respect to the restriction homomorphisms $\PL(\varphi_1)\to \PL(\varphi_2)\to\cdots$. Similarly the group $\Tsf(\varphi)$ is the direct limit of the sequence of groups $\left (\Tsf(\varphi_n)\right )_n$.
\end{lem}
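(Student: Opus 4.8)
The plan is to deduce Lemma~\ref{p-direct-limit} directly from the general direct limit statement in Lemma~\ref{l-limit} by verifying that the specific sequence $(X_n)_n$ of subshifts of finite type fits into the abstract framework of that lemma. Concretely, I would check three things: that each $X_n$ is a closed $\varphi$-invariant subset of $X_1$ (so that the restriction homomorphisms make sense), that the sequence is nested and decreasing, and that $\bigcap_n X_n = X$. Once these are in place, Lemma~\ref{l-limit}, applied to the ambient system $(X_1, \varphi_1)$ and the nested sequence $(X_n)_n$ inside it, yields the conclusion verbatim.

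The bulk of the work is therefore setting up the correct ambient system. I would first observe that $X_1$ is itself a subshift over $A$ (indeed a subshift of finite type, defined by the length-$1$ constraint that only letters appearing in $X$ are allowed), and that all the $X_n$ and $X$ are closed $\varphi$-invariant subsets of $X_1$. Thus I apply Lemma~\ref{l-limit} with the role of the ``$(X,\varphi)$'' there played by $(X_1, \varphi_1)$, the nested sequence $(Z_n)_n$ given by $Z_n := X_{n+1}$ (or simply $Z_n := X_n$, re-indexing so that $Z_1 \supset Z_2 \supset \cdots$), and $Z := \bigcap_n X_n$. The key verification is the identity $\bigcap_{n\ge 1} X_n = X$: the inclusion $X \subset X_n$ holds for every $n$ because every length-$n$ subword of a sequence in $X$ belongs to $L_n$ by definition; conversely, if a sequence $\omega$ lies in every $X_n$, then all of its finite subwords of every length appear in some sequence of $X$, and a standard compactness argument (every finite configuration of $\omega$ is realized in $X$, and $X$ is closed) shows $\omega \in X$.

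The one genuinely substantive point, which I expect to be the main (if mild) obstacle, is precisely this last compactness argument that $\bigcap_n X_n = X$. It is the content of the classical fact that the sequence of SFT approximations $X_n$ of a subshift $X$ converges to $X$; it requires that $X$ be closed in $A^\Z$ and uses the diagonal/compactness extraction to pass from ``all finite subwords of $\omega$ appear in $X$'' to ``$\omega \in X$''. Since $(X,\varphi)$ is a subshift by hypothesis, $X$ is closed by definition, so this argument goes through without difficulty. Everything else is a direct instantiation of Lemma~\ref{l-limit}, which already handles the nontrivial group-theoretic content (that elements projecting trivially to $G_Z$ are eventually trivial in $G_{Z_n}$, via the neighborhood-and-compactness argument in its proof).

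Finally, I would note that both assertions of the lemma --- for $\PL(\varphi)$ and for $\Tsf(\varphi)$ --- follow simultaneously, since Lemma~\ref{l-limit} is stated uniformly for $G$ equal to either group; no separate argument is needed for the two cases. Thus the proof reduces to invoking Lemma~\ref{l-limit} after recording that $(X_n)_n$ is a nested decreasing sequence of closed $\varphi$-invariant subsets with intersection $X$.
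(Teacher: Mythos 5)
Your proposal is correct and matches the paper's treatment: the paper explicitly presents this lemma as a rewriting of Lemma~\ref{l-limit} for the nested sequence $X_1\supset X_2\supset\cdots$ with $\bigcap_n X_n=X$, which is exactly the instantiation you carry out. The only detail you supply that the paper leaves implicit is the compactness verification that $\bigcap_n X_n=X$, which is the standard fact about SFT approximations and is argued correctly.
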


We point out the following consequence:
\begin{cor}
If $(X, \varphi)$ is a subshift which is not of finite type, the group $\Tsf(\varphi)$ is not finitely presentable. In particular for every infinite minimal subshift $(X, \varphi)$, the group $\Tsf(\varphi)$ is not finitely presentable. 
\end{cor}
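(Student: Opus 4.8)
The plan is to use the direct limit description from Lemma~\ref{p-direct-limit} together with the stability of finite presentability under direct limits. The key algebraic fact is standard: if a finitely presented group $G$ is realised as a direct limit $G=\varinjlim G_n$ of a sequence of groups with transition homomorphisms $\psi_n\colon G_n\to G_{n+1}$, then the limit stabilises, in the sense that for some $n_0$ the natural map $G_{n_0}\to G$ is surjective and its kernel is generated as a normal subgroup by finitely many elements already coming from $G_{n_0}$; concretely, the canonical map $G_{n}\to G$ must be an isomorphism for all $n$ large enough once one arranges the maps to be surjective. I would first reduce to this algebraic statement and then show it fails here.

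First I would apply Lemma~\ref{p-direct-limit} to write $\Tsf(\varphi)=\varinjlim \Tsf(\varphi_n)$, where $(X_n,\varphi_n)$ is the decreasing sequence of subshifts of finite type with $X=\bigcap_n X_n$, and where the transition maps are the restriction homomorphisms, which are \emph{surjective} by Lemma~\ref{l:surjection}. Next, assuming for contradiction that $\Tsf(\varphi)$ is finitely presented, I would invoke the standard lemma: a finitely presented group that is a direct limit of a sequence along surjective homomorphisms is isomorphic, via the canonical map, to $\Tsf(\varphi_{n})$ for all sufficiently large $n$. (The finitely many generators lift to some $\Tsf(\varphi_{n_0})$ by surjectivity, and the finitely many relators, holding in the limit, already hold in some $\Tsf(\varphi_{n_1})$; taking $n\ge \max(n_0,n_1)$ forces the restriction map $\Tsf(\varphi_n)\to\Tsf(\varphi)$ to be injective, hence an isomorphism.) Thus there exists $N$ such that the restriction map $\Tsf(\varphi_n)\to\Tsf(\varphi_{n+1})$ is an isomorphism for all $n\ge N$.

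The final step is to contradict this by showing the restriction maps are never eventually injective when $X$ is not of finite type. If $X$ is not of finite type then the inclusions $X_{n+1}\subsetneq X_n$ are proper for infinitely many $n$, so $X_{n+1}$ is a proper $\varphi_n$-invariant closed subset of $X_n$. By the kernel statement of Lemma~\ref{l:surjection} applied to the pair $Z=X_{n+1}\subset X_n$, the restriction homomorphism $\Tsf(\varphi_n)\to\Tsf(\varphi_{n+1})$ then has \emph{infinite} (in particular non-trivial) kernel. Hence it is not injective for infinitely many $n$, contradicting the previous paragraph. This contradiction shows $\Tsf(\varphi)$ is not finitely presentable. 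For the ``in particular'' clause, I would note that an infinite minimal subshift is never of finite type: a minimal subshift of finite type is necessarily finite (it is a single periodic orbit), so the hypothesis of the first assertion applies.

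The main obstacle I anticipate is the precise formulation and justification of the direct-limit rigidity lemma, and in particular getting the two quantifiers right: one must lift the finitely many generators of a finite presentation to a single stage $\Tsf(\varphi_{n_0})$ using surjectivity of the maps $\Tsf(\varphi_{n_0})\to\Tsf(\varphi)$, and then use that the finitely many defining relations — which hold in the limit — must already hold at some finite stage, since each relator is a word that maps to the identity in $\varinjlim$ and therefore becomes trivial at a finite level. Combining these correctly yields eventual injectivity of the transition maps, which is exactly what Lemma~\ref{l:surjection} forbids. Everything else is a direct citation of results already established in the excerpt.
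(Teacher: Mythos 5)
Your overall strategy is exactly the paper's: write $\Tsf(\varphi)=\varinjlim \Tsf(\varphi_n)$ via Lemma~\ref{p-direct-limit}, observe that the relevant restriction epimorphisms have non-trivial kernel by Lemma~\ref{l:surjection} because $X$ is not of finite type, and conclude that a finitely presented group cannot be such a limit. (The paper contradicts non-injectivity of $\Tsf(\varphi_n)\to\Tsf(\varphi)$ for every $n$, you contradict non-injectivity of $\Tsf(\varphi_n)\to\Tsf(\varphi_{n+1})$ for infinitely many $n$; both are fine.) The ``in particular'' clause is handled the same way in both arguments.

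The one step that needs tightening is your parenthetical justification of the direct-limit rigidity lemma. Lifting the finitely many generators of $\Tsf(\varphi)$ to some $\Tsf(\varphi_{n_0})$ and killing the finitely many relators at some later stage $n$ produces a homomorphism $\sigma\colon \Tsf(\varphi)\to\Tsf(\varphi_n)$ with $\pi_n\circ\sigma=\mathrm{id}$, i.e.\ a \emph{section} of $\pi_n\colon\Tsf(\varphi_n)\to\Tsf(\varphi)$ --- but a split surjection need not be injective, since $\sigma$ need not be onto. The correct form of the lemma requires that the approximating groups be finitely generated: one uses that the kernel of a surjection from a \emph{finitely generated} group onto a finitely presented group is normally generated by finitely many elements, and then those finitely many normal generators die at a finite stage, forcing $\Tsf(\varphi_n)\to\Tsf(\varphi)$ to be an isomorphism for large $n$. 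So you must also invoke Theorem~\ref{t-fg} to get that each $\Tsf(\varphi_n)$ is finitely generated (each $(X_n,\varphi_n)$ is a subshift), which is precisely the hypothesis the paper makes explicit at this point. With that addition the argument is complete.
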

\begin{proof}
If $(X, \varphi)$ is  not of finite type, the inclusion $X_n\supset X$ is strict for every $n\ge 1$, so that the natural epimorphism $\Tsf(\varphi_n)\to \Tsf(\varphi)$ has a non-trivial kernel. Since $\Tsf(\varphi)=\varinjlim \Tsf(\varphi_n)$ is the direct limit of finitely generated groups (by Theorem \ref{t-fg} applied to every $\Tsf(\varphi_n)$), the group $\Tsf(\varphi)$ cannot be finitely presented. 
Moreover a  subshift of finite type  has periodic orbits, thus it is never minimal (unless it is finite and consists of a single periodic orbit).
\end{proof}

We now turn to study Kazhdan's property $(T)$ for subgroups of $\PL(\varphi)$. For preliminaries on Kazhdan's property $(T)$, we refer to \cite{BHV}. We will freely use some basic facts, in particular that a discrete group with Kazhdan's property $(T)$ is finitely generated, that homomorphic images of a property $(T)$ group still have property $(T)$, and that  amenable groups with property $(T)$ are finite. 

In addition we will need the following result from \cite{Cor-partial} and \cite{LMT}. We denote by $\PL(\Sbb^1)$  the group of piecewise linear orientation-preserving homeomorphisms of the circle. 

\begin{thm} \label{t-PL-T-circle}
Every subgroup of $\PL(\Sbb^1)$ with Kazhdan's property $(T)$ is finite cyclic. 
\end{thm}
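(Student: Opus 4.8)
The plan is to follow the cohomological approach of \cite{Cor-partial, LMT}, exploiting that property $(T)$ is equivalent to the fixed-point property (FH) (Delorme--Guichardet) and building an explicit affine isometric action out of the breakpoint data of the $\PL$ homeomorphisms. Let $G\le \PL(\Sbb^1)$ have property $(T)$. Since property $(T)$ groups are finitely generated, $G$ is generated by a finite set $S$. A finite orientation-preserving subgroup of $\Homeo_+(\Sbb^1)$ is conjugate to a group of rotations, hence cyclic; so it will suffice to prove that $G$ is \emph{finite}. Let $B\subset \Sbb^1$ be the set of all breakpoints of all elements of $G$. The breakpoints of a product $gh$ are contained in the breakpoints of $h$ together with $h^{-1}$ of the breakpoints of $g$, so $B$ is $G$-invariant, and being generated as a $G$-set by the finitely many breakpoints of the elements of $S$, it is a finite union of $G$-orbits (in particular countable).

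For $g\in G$ and $x\in \Sbb^1$ write $j_g(x)=\log\bigl(g'_+(x)/g'_-(x)\bigr)$ for the logarithmic jump of the derivative, which is nonzero exactly at the breakpoints of $g$. Let $\pi$ be the orthogonal Koopman representation of $G$ on the real Hilbert space $\ell^2(B)$, determined by $\pi(g)\delta_y=\delta_{gy}$, and define $c\colon G\to \ell^2(B)$ by $c(g)=\sum_{x} j_g(x)\,\delta_{g(x)}$, a finitely supported, hence square-summable, vector. The chain rule gives $j_{gh}(x)=j_g(h(x))+j_h(x)$, and reindexing the two resulting sums yields the cocycle identity $c(gh)=c(g)+\pi(g)c(h)$. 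Consequently $\alpha(g)v=\pi(g)v+c(g)$ defines an affine isometric action of $G$ on $\ell^2(B)$.

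By the Delorme--Guichardet theorem, property $(T)$ implies property (FH), so $\alpha$ has a fixed point $\eta\in \ell^2(B)$; unwinding $\pi(g)\eta+c(g)=\eta$ and evaluating at $g(x)$ gives the coboundary relation $j_g(x)=\eta(gx)-\eta(x)$ for every $g\in G$ and every $x\in B$. Now I would first dispose of the case in which all $G$-orbits in $B$ are finite: then $B$ is finite, $G$ acts on it through a finite symmetric group, and the kernel $K$ of this action fixes $B$ pointwise. Since the breakpoints of each $k\in K$ lie in $B$, the element $k$ is affine on every arc between consecutive points of $B$ and fixes both of its endpoints, forcing $k=\id$; hence $K$ is trivial and $G$ is finite.

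The remaining, and genuinely delicate, case is when some $G$-orbit $O\subseteq B$ is infinite. Here one must derive a contradiction from the coexistence of a \emph{square-summable} $\eta$ — which necessarily decays along the infinite orbit $O$ — with the coboundary relation $j_g(x)=\eta(gx)-\eta(x)$ recording the persistence of nonzero jumps along $O$. I expect this incompatibility between $\ell^2$-decay and the non-vanishing of breakpoints along an infinite orbit to be the main obstacle, and it is precisely the technical core carried out in \cite{Cor-partial, LMT}. Once $G$ is known to be finite, the reduction of the first paragraph shows that it is finite cyclic, completing the proof.
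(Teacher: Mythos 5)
First, a point of calibration: the paper does not prove this statement at all --- Theorem \ref{t-PL-T-circle} is imported as a black box from \cite{Cor-partial} and \cite{LMT}, so there is no in-paper argument to match yours against. Your setup is sound as far as it goes: the chain rule for the logarithmic jumps, the cocycle identity $c(gh)=c(g)+\pi(g)c(h)$, the Delorme--Guichardet reduction, the observation that $B$ is a finite union of $G$-orbits, and the disposal of the case where $B$ is finite (an element fixing $B$ pointwise with all breakpoints in $B$ is affine with fixed endpoints on each complementary arc, hence trivial; if $B=\emptyset$ the group consists of rotations and is abelian, hence finite) are all correct, and the reduction from ``finite'' to ``finite cyclic'' is standard.

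However, the case you label as ``genuinely delicate'' --- an infinite $G$-orbit of breakpoints --- is the entire content of the theorem, and you do not prove it; you only express the expectation that $\ell^2$-decay of $\eta$ is incompatible with persistent jumps along the infinite orbit. That expectation is not justified, and I do not see how to justify it from your setup: the coboundary relation $j_g(x)=\eta(gx)-\eta(x)$ with $\eta\in\ell^2(B)$ is a constraint, not an absurdity. It merely says that each jump is a difference of two square-summable values; nothing prevents, say, $\eta(sx_0)-\eta(x_0)$ from being a fixed nonzero number while $\eta$ decays along the rest of the orbit. (Note that the groups one must rule out here are exactly those like Thompson's group $T$, whose breakpoints form a single infinite orbit, so this case cannot be treated as a technicality.) The actual arguments in \cite{Cor-partial} and \cite{LMT} do not run through an $\ell^2$-coboundary at all: they use property FW (a consequence of property $(T)$) applied to a \emph{commensurating} action on a set of affine half-germs --- a cardinality/$\ell^1$-type framework in which transfixing the commensurated set conjugates the action to one without breaks --- and then conclude via results in the spirit of \cite{Nav-T}. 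So the gap is not merely an omitted computation; the approach as set up does not visibly produce the contradiction you need, and a genuinely different mechanism is required to close the infinite-orbit case.
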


To proceed to the proof of Theorem \ref{t-T} we begin by considering  the case where $(X, \varphi)$ is a subshift of finite type. 
Recall that a subshift of finite type defined by the set $L\subset A^n$ is \emph{irreducible}  if it is not reduced to a single periodic orbit and if for every $w_1$ and $w_2\in L$, there exists a finite  word $u\in A^*$ which admits  $w_1$ as a prefix and $w_2$ as a suffix, and such that all subwords of $u$ of length $n$ belong to $L$.  It is  well-known and not difficult to see that an irreducible subshift of finite type  has a dense set of periodic orbits, and admits dense infinite orbits.

\begin{lem} \label{l-SFT-T}
Assume that $(X, \varphi)$ is an irreducible subshift of finite type. Then $\PL(\varphi)$ has no non-trivial subgroup with Kazhdan's property $(T)$.
\end{lem}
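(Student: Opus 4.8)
The plan is to restrict the action of $\PL(\varphi)$ to the periodic orbits of the subshift, where Theorem~\ref{t-PL-T-circle} becomes applicable, and then to glue this local information together using the density of the periodic orbits. The only genuine dynamical inputs are that an irreducible subshift of finite type has both a dense set of periodic orbits and dense infinite orbits; everything else will be formal consequences of property~$(T)$.

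First I would record that $\PL(\varphi)$ is \emph{torsion-free}. If $g\in\PL(\varphi)$ had finite order and $\cO\cong\R$ is an infinite $\Phi$-orbit, then $g$ restricts to a finite-order orientation-preserving homeomorphism of $\R$, hence to the identity on $\cO$ (the group $\Homeo_+(\R)$ is torsion-free). Since the infinite orbits are dense in $Y^\varphi$, the element $g$ is the identity on a dense subset, so $g=\id$. This step is what will let me rule out even the \emph{finite} non-trivial property~$(T)$ subgroups.

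Next I would analyse a single periodic orbit $Z=\{p,\varphi(p),\dots,\varphi^{k-1}(p)\}$ of period $k$. The set $Z$ is finite, closed and $\varphi$-invariant, and its suspension $Y^\varphi_Z$ is a single circle $\cong\R/k\Z$. Because the transverse direction is discrete, the chart conditions of Definition~\ref{d-group} impose nothing beyond being an orientation-preserving piecewise linear homeomorphism of this circle, so $\PL(\varphi|_Z)\cong\PL(\Sbb^1)$ (the reparametrisation $\R/k\Z\cong\R/\Z$ by scaling is itself piecewise linear, hence preserves the PL structure). By Lemma~\ref{l:surjection}, restriction to $Y^\varphi_Z$ yields a homomorphism $r_Z\colon\PL(\varphi)\to\PL(\varphi|_Z)\cong\PL(\Sbb^1)$. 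Now let $\Lambda\le\PL(\varphi)$ have property~$(T)$. For each periodic orbit $Z$ the image $r_Z(\Lambda)$ is a homomorphic image of a property~$(T)$ group, hence has property~$(T)$, and therefore is finite cyclic by Theorem~\ref{t-PL-T-circle}; in particular it is abelian. Thus the derived subgroup satisfies $\Lambda'\le K_Z:=\{g\in\Lambda\colon r_Z(g)=\id\}$ for every periodic $Z$. An element lies in $\bigcap_Z K_Z$ exactly when it acts trivially on $\bigcup_Z Y^\varphi_Z$, which is dense since periodic orbits are dense; hence $\bigcap_Z K_Z=\{\id\}$ and so $\Lambda'=\{\id\}$, i.e.\ $\Lambda$ is abelian. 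An abelian group with property~$(T)$ is finite, and $\Lambda$ is torsion-free by the first step, so $\Lambda=\{\id\}$.

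The main obstacle — really the only non-formal point — is the identification $\PL(\varphi|_Z)\cong\PL(\Sbb^1)$, which requires checking carefully that a periodic orbit suspends to a circle and that Definition~\ref{d-group} cuts out precisely the full orientation-preserving PL homeomorphism group there (so that the non-dyadic version of the theorem, Theorem~\ref{t-PL-T-circle}, is the relevant one). Once this is in place, the argument is purely group-theoretic: the ``residually abelian'' device forces $\Lambda$ to be abelian, and amenable-plus-$(T)$ together with torsion-freeness finishes it off. Note that finite generation of $\Lambda$ is not even needed.
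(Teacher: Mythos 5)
Your proposal is correct and follows essentially the same route as the paper: restrict to the dense family of closed $\Phi$-orbits, apply Theorem~\ref{t-PL-T-circle} to each restriction to conclude the images are finite cyclic, deduce abelianness from faithfulness of the action on the union of closed orbits, invoke property $(T)$ to get finiteness, and kill torsion using the dense infinite orbits (the paper cites Proposition~\ref{p-lo} for this last step, whereas you argue it directly via torsion-freeness of $\Homeo_+(\R)$, which amounts to the same thing). The only cosmetic difference is that you phrase the abelianness step via $\Lambda'\le\bigcap_Z K_Z=\{\id\}$ while the paper embeds $H$ into a product of finite cyclic groups; these are equivalent, and your worry about identifying $\PL(\varphi|_Z)$ with all of $\PL(\Sbb^1)$ is unnecessary since Theorem~\ref{t-PL-T-circle} applies to arbitrary subgroups.
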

\begin{proof}
Let $H\le \PL(\varphi)$ be a subgroup with Kazhdan's property $(T)$. 
 The subshift $(X,\varphi)$  has a dense collection of periodic orbits, hence the suspension $(Y^\varphi,\Phi)$ has a dense collection of closed $\Phi$-orbits. The restriction of the action of $H$ to a closed orbit $\mathcal C$ gives rise to  a homomorphism $\rho_{\mathcal C}:H\to \PL(\T)$ to the group of PL homeomorphisms of the circle. Using Theorem \ref{t-PL-T-circle}, the image $\rho_{\mathcal C}(H)$ must be  finite cyclic. But the density of closed orbits implies that $H$ acts faithfully on the union of closed orbits.  Therefore $H$ embeds in an infinite product of finite cyclic groups. We deduce that $H$ is abelian (hence amenable), and since it has property $(T)$, it must be finite. However an irreducible subshift of finite type has a dense infinite orbit, so that by Proposition~\ref{p-lo} we deduce that $\PL(\varphi)$ is torsion-free and $H$ is actually trivial.
\end{proof}

\begin{lem} \label{l-minimal-T} Let $(X, \varphi)$ be an infinite minimal subshift. Then $\PL(\varphi)$ has no non-trivial subgroup with Kazhdan's property $(T)$. 
\end{lem}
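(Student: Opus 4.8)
The plan is to reduce to the finite-type case (Lemma~\ref{l-SFT-T}) by a limiting argument in the space of marked groups, using the direct-limit decomposition of Lemma~\ref{p-direct-limit} together with the openness of Kazhdan's property~$(T)$. The one genuinely new ingredient, which I would establish first, is a purely dynamical fact: because $(X,\varphi)$ is minimal and infinite, \emph{each} approximating subshift of finite type $(X_n,\varphi_n)$ of Lemma~\ref{p-direct-limit} is already irreducible. To see this, present $X_n$ as a vertex shift on the graph whose vertices are the length-$(n-1)$ words occurring in $X$ and whose edges are the length-$n$ words occurring in $X$. Fix any $x\in X$: by minimality (uniform recurrence) every such word occurs in $x$ syndetically, so the bi-infinite itinerary of $x$ visits every vertex of the graph infinitely often in both directions; hence between any two vertices there is a directed path, i.e.\ the graph is strongly connected. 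Since moreover $X_n\supseteq X$ is infinite, $X_n$ is not a single periodic orbit, so it is irreducible in the sense defined before Lemma~\ref{l-SFT-T}. (If one prefers to avoid this computation, it suffices to pass to the irreducible component of $X_n$ containing $X$ and restrict the action to its suspension, which is all the argument below actually uses.)

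Next I would set up the marking. Suppose for contradiction that $H\le\PL(\varphi)$ is a non-trivial subgroup with property~$(T)$; then $H$ is finitely generated, say by $h_1,\dots,h_k$. By Lemma~\ref{p-direct-limit} these generators are images of elements of $\PL(\varphi_{n_0})$ for some $n_0$, and for $n\ge n_0$ I let $H_n\le\PL(\varphi_n)$ be the subgroup generated by the corresponding lifts, marked by this $k$-tuple. The restriction homomorphisms $\PL(\varphi_n)\to\PL(\varphi)$ are the direct-limit maps and are surjective (Lemma~\ref{l:surjection}); consequently a word in the generators is trivial in $\PL(\varphi)$ if and only if it is eventually trivial in the groups $\PL(\varphi_n)$. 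This is exactly the statement that the marked groups $(H_n)_n$ converge to $H$ in the space of marked groups.

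Finally I would invoke the two external inputs. Since property~$(T)$ is an open condition in the space of marked groups (Shalom \cite{Shalom}, Gromov \cite{RWRG}), the convergence $(H_n)\to H$ together with the assumption that $H$ has property~$(T)$ forces $H_n$ to have property~$(T)$ for all large $n$. But $H_n$ is a subgroup of $\PL(\varphi_n)$ with $(X_n,\varphi_n)$ an \emph{irreducible} subshift of finite type by the first step, so Lemma~\ref{l-SFT-T} yields $H_n=\{\id\}$ for such $n$. As $H$ is the image of $H_n$ under restriction, this gives $H=\{\id\}$, contradicting non-triviality. (Note that $\PL(\varphi)$ is torsion-free by Proposition~\ref{p-lo}, so here ``non-trivial'' and ``infinite'' coincide, matching the phrasing of Theorem~\ref{t-T}.)

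The main obstacle is precisely the irreducibility issue isolated in the first paragraph: Lemma~\ref{l-SFT-T} genuinely requires irreducibility, and finite-type approximations of an \emph{arbitrary} subshift need not be irreducible (they may fail to have dense periodic orbits, so the restriction-to-periodic-orbits mechanism of Lemma~\ref{l-SFT-T} breaks down). The whole proof hinges on the observation that minimality of $X$ propagates to irreducibility of every $X_n$; everything else is a routine application of the direct-limit presentation and the openness of property~$(T)$.
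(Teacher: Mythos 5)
Your proposal is correct and follows essentially the same route as the paper's proof: both verify that minimality forces each finite-type approximation $(X_n,\varphi_n)$ to be irreducible, lift a finite generating set of $H$ through the surjective connecting maps of Lemma~\ref{p-direct-limit} to get marked groups $H_n$ converging to $H$, invoke the openness of property $(T)$ in the space of marked groups (Shalom, Gromov) to transfer $(T)$ to $H_n$ for large $n$, and conclude via Lemma~\ref{l-SFT-T}. The only difference is cosmetic: you verify irreducibility via the higher-block graph presentation, where the paper notes directly that any two words of $L_n$ occur in a common sequence of $X$ by minimality.
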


\begin{proof}
Let $H\le \PL(\varphi)$ have Kazhdan's property $(T)$, in particular $H$ is finitely generated, let $S$ be a finite generating set of $H$. 
Let $\left ((X_n, \varphi_n)\right )_n$ be the associated sequence of  subshifts of finite type as explained above. Then each $(X_n, \varphi_n)$ is irreducible: indeed any two words $w_1$ and $w_2\in L_n$ appear infinitely often  in every sequence of $X$, as a consequence of the minimality of $(X, \varphi)$.  By Lemma \ref{p-direct-limit} we have $\PL(\varphi)=\varinjlim \PL(\varphi_n)$. Since the connecting homomorphisms are surjective, we can choose a finite set $S_1\subset \PL(\varphi_1)$ which projects bijectively to $S$. Let $H_1$ be the group generated by $S_1$ and $H_n \le \PL(\varphi_n)$ be its image. Then each $H_n$ is finitely generated and $H=\varinjlim H_n$.  By  \cite[Theorem 6.7]{Shalom} and \cite[\S 3.8.B]{RWRG}, a group with property $(T)$ cannot be a direct limit of finitely generated groups without property $(T)$. We deduce that $H_n$ must also have property $(T)$ for $n$ large enough. By Lemma \ref{l-SFT-T},  we deduce that $H_n$ is  trivial for $n$ large enough, and thus so is $H$.
\end{proof}

\begin{prop} \label{p-T-subshift}
For every subshift $(X, \varphi)$, every subgroup of   $\PL(\varphi)$ with Kazhdan's property~$(T)$ is finite  abelian.   
\end{prop}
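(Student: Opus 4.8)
The plan is to reduce first to the case of a subshift of finite type, and then to analyse that case by restricting the action to the periodic orbits while separately controlling the kernel of this restriction. So let $H\le \PL(\varphi)$ be a subgroup with property $(T)$; then $H$ is finitely generated. Following the scheme of the proof of Lemma \ref{l-minimal-T}, I would approximate $(X,\varphi)$ by the decreasing sequence of subshifts of finite type $(X_n,\varphi_n)$ with $\bigcap_n X_n=X$, and use Lemma \ref{p-direct-limit} to write $\PL(\varphi)=\varinjlim \PL(\varphi_n)$ with surjective connecting maps (Lemma \ref{l:surjection}). Lifting a finite generating set of $H$ to $\PL(\varphi_1)$ and taking images yields finitely generated groups $H_n\le \PL(\varphi_n)$, marked by a fixed generating set, with $H=\varinjlim H_n$; since property $(T)$ is open in the space of marked groups \cite{Shalom, RWRG}, the group $H_n$ has property $(T)$ for $n$ large, and $H$ is a quotient of this $H_n$. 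As a quotient of a finite abelian group is finite abelian, it suffices to prove the statement when $(X,\varphi)$ is a subshift of finite type. This reduction is essential: a minimal subshift has no periodic orbits, so the argument below would be vacuous for it, whereas a non-empty subshift of finite type always has periodic points.

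So assume $(X,\varphi)$ is a subshift of finite type and let $W=\overline{\mathrm{Per}}$ be the closure of its (non-empty) set of periodic points. For each periodic $\Phi$-orbit $\mathcal C\subset Y^\varphi$, a compact circle, restriction gives a homomorphism $\rho_{\mathcal C}\colon H\to \PL(\Sbb^1)$, well defined because on the compact orbit $\mathcal C$ every element of $\PL(\varphi)$ meets only finitely many charts and is therefore piecewise linear with finitely many breakpoints. Each image $\rho_{\mathcal C}(H)$ has property $(T)$, hence is finite cyclic by Theorem \ref{t-PL-T-circle}; consequently the combined homomorphism $R=(\rho_{\mathcal C})_{\mathcal C}\colon H\to \prod_{\mathcal C}\PL(\Sbb^1)$ has abelian image. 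Being a quotient of $H$, the image $R(H)$ has property $(T)$, and an abelian group with property $(T)$ is finite; thus $H/\ker R$ is finite abelian. (Alternatively, one could split $W$ into its finitely many irreducible components and apply Lemma \ref{l-SFT-T} to those that are not single periodic orbits.)

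It then remains to show that $N:=\ker R$ is trivial. An element $g\in N$ acts trivially on every periodic orbit, hence, by continuity of the displacement function and density of $\mathrm{Per}$ in $W$, on the whole of $Y^\varphi_W$. Here I would exploit the rigidity built into Definition \ref{d-group}: in any chart $\pi_{C,J}$ the element $g$ acts as $\id\times f$ with $f$ \emph{independent} of the point of $C$, so if $g$ fixes the fibre over a single point of $W\cap C$ then $f=\id$ and $g$ is trivial on all of $U_{C\times J}$. Covering the compact set $Y^\varphi_W$ by finitely many such charts shows that $g$ is the identity on a neighbourhood of $Y^\varphi_W$, so its support is a compact subset of the open set $V=Y^\varphi\setminus Y^\varphi_W$. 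Every $\Phi$-orbit in $V$ is non-periodic, hence a line $\cong\R$, and since $g$ has compact support, its restriction there is a compactly supported piecewise linear homeomorphism of $\R$, with finitely many breakpoints. Extending over the point at infinity, this yields for each orbit $\mathcal O\subset V$ a homomorphism $N\to \PL(\Sbb^1)$ whose image has property $(T)$, hence is finite cyclic by Theorem \ref{t-PL-T-circle}; but compactly supported piecewise linear homeomorphisms of $\R$ form a subgroup of $\Homeo_+(\R)$, hence a torsion-free group, so this image is trivial. Therefore $N$ acts trivially on every orbit of $V$ and on $W$, i.e.\ on all of $Y^\varphi$, so $N=1$ and $H\cong R(H)$ is finite abelian.

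The main obstacle is exactly the control of $N$. A priori an element of $\PL(\varphi)$ that is trivial on all periodic orbits could have support accumulating on $W$, in which case its restriction to a non-compact orbit would have infinitely many breakpoints and would fall outside the scope of Theorem \ref{t-PL-T-circle}. The rigidity of Definition \ref{d-group}, namely that the displacement is locally constant in the totally disconnected direction, is precisely what forces triviality on a neighbourhood of $W$ and thus compact support, which is what makes the circle result applicable; together with the passage to subshifts of finite type (which guarantees that $W\neq\emptyset$), this is the crux of the argument.
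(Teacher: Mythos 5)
Your overall architecture is sound and genuinely different in organisation from the paper's (which works per orbit, for an arbitrary subshift, by picking a \emph{minimal} set $Z$ in each orbit closure, killing the action near $Y^\varphi_Z$ via Lemma \ref{l-minimal-T} or Theorem \ref{t-PL-T-circle}, and concluding with a logarithm-of-derivative homomorphism to $\R$ at a boundary fixed point). But there is one step in your treatment of $N=\ker R$ that is not justified and is exactly where the finite-type hypothesis does real work. You pass from ``the support of $g\in N$ is a compact subset of $V=Y^\varphi\setminus Y^\varphi_W$'' to ``the restriction of $g$ to a non-periodic orbit $\mathcal O\subset V$ is a compactly supported PL homeomorphism of the line''. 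These are not the same: $\mathcal O$ carries its intrinsic line topology, and a compact subset of $V$ can a priori meet $\mathcal O$ in a set that is unbounded in the flow parameter (the orbit could keep returning near the support). What you need is that $\mathcal O$ is \emph{proper} in $V$, i.e.\ that both the $\alpha$- and $\omega$-limit sets of every point of an SFT are contained in $\overline{\mathrm{Per}}=W$ (equivalently, every orbit eventually enters and stays in any neighbourhood of $Y^\varphi_W$ in both time directions). This is true for subshifts of finite type --- any word occurring at arbitrarily late positions in $x$ can be closed up into a periodic point of the SFT, so $\omega(x)\subset\overline{\mathrm{Per}}$ --- but it is false for general subshifts (an infinite minimal subshift has no periodic points at all), so it must be stated and proved; without it, $g|_{\mathcal O}$ could have unbounded support and infinitely many breakpoints, and both the extension to $\Sbb^1$ and the appeal to torsion-freeness of compactly supported PL homeomorphisms collapse. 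Two smaller points to make explicit: $N$ has property $(T)$ because it has finite index in $H$ (you use this when applying Theorem \ref{t-PL-T-circle} to $\rho_{\mathcal O}(N)$), and the triviality of $g$ on $Y^\varphi_W$ follows simply from continuity of $g$ on the closure of the union of periodic orbits (no displacement functions needed).

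Once that properness lemma is supplied, your proof goes through, and the initial reduction to the SFT case via $\PL(\varphi)=\varinjlim\PL(\varphi_n)$ and openness of property $(T)$ in the space of marked groups is correct and mirrors the paper's own use of that device inside Lemma \ref{l-minimal-T}. What your route buys is that you never have to analyse infinite minimal subsystems separately (they are automatically swallowed by $W$ in an SFT, since periodic points are dense in the nonwandering set); what it costs is precisely the extra dynamical input about limit sets of SFTs, which the paper avoids by working with a minimal set inside each orbit closure and a derivative cocycle instead of compact supports on lines.
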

\begin{proof}
Fix $H\le \PL(\varphi)$ with Kazhdan's property $(T)$. 
For $x\in X$ we let $\cO_x\subset Y^\varphi$ be the corresponding $\Phi$-orbit in the suspension, and let $\rho_x\colon H\to \Homeo_+(\cO_x)$ be the homomorphism obtained by restricting the $H$-action to $\cO_x$. Let us prove that, for every $x\in X$, the image of $\rho_x$ must  be  finite cyclic. As in Lemma \ref{l-SFT-T}, this implies that $H$ is abelian, hence finite by Kazhdan's property $(T)$. 
If $x$ is periodic, so that $\mathcal{O}_x$ is a closed orbit, then the image of $\rho_x$ is finite cyclic by Theorem \ref{t-PL-T-circle}. Thus we can assume that $x$ has an infinite $\varphi$-orbit (in this case, we will prove that the image of $\rho_x$ is trivial). 

Choose a closed minimal  $\varphi$-invariant subset $Z\subset X$  contained in the orbit-closure of $x$ (note that $Z$ could be reduced to a single finite orbit). Consider the restriction homomorphism $H\to \PL(\varphi|_Z)$ (Lemma \ref{l:surjection}). If $Z$ is a finite orbit, Theorem \ref{t-PL-T-circle} implies that its image is finite cyclic, and  if $Z$ is infinite  Lemma \ref{l-minimal-T} implies that its image is trivial.  In both cases, denote by $K\unlhd H$  its kernel, which has finite index in $H$. The group $K$  still has property $(T)$, and in particular it is finitely generated.  
 \begin{claim}
 There exists an open subset $V\supset Y^\varphi_Z$ such that $K$ fixes $V$ pointwise. 
 \end{claim}
 \begin{proof}[Proof of Claim]
 Assume first that $Z$ is infinite, so that $Y^\varphi_Z$ does not contain closed orbits.
 Fix $g\in K$. For every $y\in Y^\varphi_Z\subset Y^\varphi$, there exists a domain $U_{C \times I}\subset Y^\varphi$ containing $y$,  in restriction to which $g$  is given by a single PL homeomorphism $f:I\to f(I)$.  As $g$ is the identity in restriction to $Y^\varphi_Z \cap U_{C\times I}$ and $Y^\varphi_Z$ does not contain closed orbits, $f$ must necessarily be the identity on $I$, so that $g$ actually fixes $U_{C \times I}$ pointwise. Since $y$ was arbitrary, we have that $g$ is the identity in restriction to an open neighbourhood of $Y^\varphi_Z$. Finally, since $K$ is finitely generated, we can find an open neighbourhood of $Y^\varphi_Z$ which is fixed by all elements in some finite generating set of $K$ and the conclusion follows. 
 
 Assume now that $Z$ is a finite orbit of period $N>0$, so that $Y^\varphi_Z$ consists of a single closed orbit of length $N$. 
 Again fix $g\in K$. Let $y\in Y^\varphi_Z\subset Y^\varphi$ and choose a  domain $U_{C \times I}\subset Y^\varphi$ containing $y$, where $g$ is given by a single PL homeomorphism $f\colon I \to f(I)$. As $g$ belongs to the kernel of the restriction homomorphism (Lemma \ref{l:surjection}), the PL homeomorphism $f$ must coincide on $I$ with a translation by an integer $n_{g, y}$ which is a multiple of $N$. In other words, the germ of $g$ at the point $y$ coincides with the germ of the flow $\Phi^{n_{g, y}}$.  (Note that in particular the integer $n_{g, y}$ does not depend on the choice of the domain, as every neighbourhood of $y$  must intersect non-trivially the orbit $\mathcal{O}_x$ which is not periodic, so that the flow $\Phi$ acts freely on it.)  The map $g\mapsto n_{g, y}$ is  a homomorphism $K\to \Z$. Using that $K$ has Kazhdan's property $(T)$, this homomorphism must have trivial image, so that we must have $n_{g, y}=0$ for every $g\in K$. In particular, every $g$ must fix a neighbourhood of $y$, and we can repeat the previous reasoning.
 \end{proof}
 
 To conclude the proof of the proposition, let $V\supset Y^\varphi_Z$ be a neighbourhood fixed by $K$. Since $\cO_x$ accumulates on $Y^\varphi_Z$, the intersection $\cO_x\cap V$ is non-empty. It follows that the action of $\rho_x(K)$  on $\cO_x$ admits  global fixed points.  Assume towards a contradiction that $\rho_x(K)$ is non-trivial, and let $\xi\in \cO_x$ a point lying on the boundary of the set of global fixed points. Since the action of elements of $\rho_x(K)$ is piecewise linear, we have a non-trivial homomorphism $\rho_x(K)\to \R$  given by the logarithm of a one-sided derivative at $\xi$. This is in contradiction with the fact that $K$ has Kazhdan's property $(T)$.  We deduce that $\rho_x(K)$ is trivial. In particular, $\rho_x(H)$ is a homomorphic image of $H/K$, and therefore is finite cyclic. Since moreover $\rho_x(H)$ is torsion-free, we deduce that it is actually trivial.
This concludes the proof. 
\end{proof}

The assumption that $(X, \varphi)$ is a subshift is not essential, due to  Lemma \ref{l-fg-subshift}:

\begin{proof}[Proof of Theorem \ref{t-T}]
Let $(X, \varphi)$ be an arbitrary Stone system, and assume that $G\le \PL(\varphi)$ is a subgroup with Kazdan's property $(T)$. Then $G$ is finitely generated, so that by Lemma \ref{l-fg-subshift} we can find a subshift $(X', \varphi')$ and a homomorphism  $G\to \PL(\varphi')$ with abelian kernel. 
 By Proposition~\ref{p-T-subshift}, the image of this homomorphism is finite. Therefore $G$ is virtually abelian, and thus property $(T)$ implies that it is  finite. 
\end{proof}

\section{Isomorphisms correspond to flow equivalences} \label{s-isomorphisms}

We now prove Theorem \ref{t-isom}, which will be obtained  by combining two results. The first is the following:
\begin{thm}[(Ben Ami and Rubin \cite{BAR})] \label{t-BAR}
For $i=1, 2$, let $G_i$ be a fragmentable group of homeomorphisms of a locally compact Hausdorff space $Y_i$ whose action on $Y_i$ has no global fixed points. Then for every group isomorphism $\rho\colon G_1\to G_2$ there exists a homeomorphism $q\colon Y_1\to Y_2$ such that $\rho(g)=q\circ g\circ q^{-1}$.
\end{thm}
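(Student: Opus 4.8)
The plan is to follow the reconstruction method of Rubin, adapted to the present hypotheses (fragmentability, local compactness, no global fixed point). The strategy is to recover the entire topological datum of the action of $G_i$ on $Y_i$ — the underlying space, its topology, and the way group elements permute points — from the abstract group $G_i$ alone, in a way that is functorial under isomorphisms. Once this is achieved, applying the reconstruction to $G_1$ and to $G_2$ and transporting along $\rho$ produces the homeomorphism $q$, and the relation $\rho(g)=q\circ g\circ q^{-1}$ will come for free from functoriality.

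First I would extract the rigidity consequences of the hypotheses. Fragmentability together with the absence of a global fixed point already forces the action to be micro-supported: if some nonempty open $U_0$ had $\rist(U_0)=\{\id\}$, then picking $y\in U_0$ and applying fragmentability to the cover $\{Y\setminus\{y\}, U_0\}$ (legitimate since $Y$ is Hausdorff, so $\{y\}$ is closed) would give $G=\langle \rist(Y\setminus\{y\})'\rangle\le G_y$, making $y$ a global fixed point, a contradiction. With micro-support in hand, the Double Commutator Lemma (Lemma \ref{l-epstein}) and the usual commutator manipulations are available. The central object to reconstruct is then the lattice $\mathcal{R}_i$ of regular open subsets of $Y_i$, together with the assignment $U\mapsto \rist(U)'$.

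The technical heart is to characterize the family of subgroups $\{\rist(U)' : U\in\mathcal{R}_i\}$, and the relations among them, \emph{purely group-theoretically}, so that an abstract isomorphism $\rho$ must carry the family attached to $G_1$ onto the one attached to $G_2$. Concretely I would give isomorphism-invariant descriptions of (i) when two such subgroups have disjoint support (detected via commutation patterns, since micro-support lets one witness room to move), (ii) the partial order induced by inclusion of supports, and (iii) the maximality/largeness condition singling out each $\rist(U)'$ among all subgroups with prescribed commutation behaviour. Having reconstructed $\mathcal{R}_i$ as an abstract lattice, I would recover the points: a point $y\in Y_i$ corresponds to the filter of its regular open neighbourhoods, and local compactness plus the Hausdorff property pin down the points as exactly the ``small'' maximal filters (those containing a relatively compact element), with the reconstructed topology agreeing with the original one; micro-support guarantees distinct points yield distinct filters. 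Transporting $\mathcal{R}_1$ through $\rho$ then gives a lattice isomorphism $\mathcal{R}_1\to\mathcal{R}_2$, hence a bijection $q\colon Y_1\to Y_2$ which is a homeomorphism because it respects the reconstructed topology.

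Finally I would verify $\rho(g)=q\circ g\circ q^{-1}$. Each $g\in G_1$ acts on $\mathcal{R}_1$ by $U\mapsto g(U)$, conjugation sends $\rist(U)'$ to $\rist(g(U))'$, and by construction $\rho(g)$ acts on $\mathcal{R}_2$ in the matching way; thus $\rho(g)$ and $qgq^{-1}$ induce the same transformation of the reconstructed point–filters, and faithfulness of the point–filter correspondence forces them to be equal as homeomorphisms of $Y_2$. The main obstacle is step three: producing an isomorphism-invariant definition of the rigid stabilisers together with the disjointness and order relations on supports. Everything downstream is bookkeeping, but it is precisely here that all three hypotheses are needed — fragmentability to have enough small-support elements, the absence of a global fixed point to make the reconstruction nondegenerate, and local compactness to rule out pathological maximal filters ``at infinity'' — and this is the substance of the Ben Ami--Rubin argument.
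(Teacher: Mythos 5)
First, a point of framing: the paper does not prove Theorem \ref{t-BAR} at all --- it is quoted from Ben Ami and Rubin \cite{BAR} and used as a black box in Section \ref{s-isomorphisms} --- so there is no internal proof to compare your attempt against; the relevant comparison is with the argument of \cite{BAR} itself. Your outline correctly identifies the shape of that argument (Rubin-style reconstruction: recover the regular open algebra from the abstract group, recover points as suitable filters/ultrafilters on it, transport everything along $\rho$, and read off equivariance), and your preliminary observation that fragmentability together with the absence of global fixed points forces the action to be micro-supported is correct and is a genuine first step: if $\rist(U_0)=\{\id\}$ for some non-empty open $U_0$, covering $Y$ by $U_0$ and $Y\setminus\{y\}$ for $y\in U_0$ and applying fragmentability does give $G\le G_y$, a contradiction.

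That said, as a proof the proposal has a gap exactly where you acknowledge it, and that gap is not ``one lemma plus bookkeeping'': the entire content of the Ben Ami--Rubin theorem is the isomorphism-invariant characterisation of the subgroups $\rist(U)'$ for $U$ regular open, together with the disjointness and inclusion relations among their supports. Your items (i)--(iii) are stated as goals rather than established. In particular, ``disjointness detected via commutation patterns'' is precisely the delicate point: commutation of $\rist(U)'$ and $\rist(V)'$ does not by itself force $U$ and $V$ to be disjoint, and one needs a careful interplay between micro-support, the ability to ``move'' sets inside other sets, and the Boolean algebra of regular open sets to set up the dictionary. Likewise, recovering points as ``small maximal filters'' requires local compactness in an essential way to exclude filters at infinity, plus a separate argument that the reconstructed topology agrees with the original one and that the resulting bijection is continuous in both directions. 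None of this is carried out, and it occupies the bulk of \cite{BAR}. The proposal should therefore be read as an accurate roadmap of the external reference, not as a proof of the theorem.
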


The second is a  characterisation of flow equivalence due to Parry and Sullivan \cite{PS}, which we recall now. Given a Stone system $(X, \varphi)$, a \emph{clopen transversal} is a clopen subset $C\subset X$ which intersects all $\varphi$-orbits. It follows that for every point $x\in X$ the first entry time \[T_C(x)=\min\{n\ge 1\colon \varphi^n(x)\in C\}\]
is finite and locally constant as a function of $x$, and that the corresponding \emph{first return map} $\varphi_C\colon C\to C, x\mapsto \varphi^{T_C(x)}(x)$ is a homeomorphism. 
\begin{thm}[(Parry and Sullivan \cite{PS})] \label{t-PS}
Two Stone systems $(X_i, \varphi_i)$, for $i=1, 2$, are flow equivalent if and only if they admit clopen transversals $C_i\subset X_i$ whose first return maps are topologically conjugate.
\end{thm}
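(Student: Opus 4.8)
The statement is an equivalence, and I will treat the two implications separately; throughout I use that flow equivalence is an equivalence relation (the identity, the inverse of a homeomorphism, and the composition of two homeomorphisms all send orbits to orbits preserving orientation). For a Stone system $(X,\varphi)$ I write $\Sigma_X\subset Y^\varphi$ for the image of $X\times\{0\}$; it is a clopen subset meeting every $\Phi$-orbit, transverse to the flow, whose first-return time is the constant function $1$ and whose first-return map, under the identification $\Sigma_X\cong X$, is $\varphi$ itself. More generally, for a clopen transversal $C\subset X$ the image $\Sigma_C$ of $C\times\{0\}$ is a clopen cross-section whose first-return time is $T_C$ and whose first-return map is $\varphi_C$.

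The first ingredient, used for the easy implication, is that a system is always flow equivalent to the system induced on a clopen transversal. Given $C\subset X$, writing each point of $Y^\varphi$ uniquely as $\Phi^s([x,0])$ with $x\in C$ and $0\le s<T_C(x)$, the orbit-reparametrising assignment $\Phi^s([x,0])\mapsto \Psi^{s/T_C(x)}([x,0])$, where $\Psi$ denotes the suspension flow of $\varphi_C$ on $Y^{\varphi_C}$, is well defined and continuous because $T_C$ is locally constant and $C$ is clopen; it is a homeomorphism $Y^\varphi\to Y^{\varphi_C}$ carrying orbits to orbits in an orientation-preserving way. Hence $(X,\varphi)$ and $(C,\varphi_C)$ are flow equivalent. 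For the implication $(\Leftarrow)$ I then argue as follows: if $h\colon C_1\to C_2$ conjugates $\varphi_{C_1}$ to $\varphi_{C_2}$, then $(x,t)\mapsto(h(x),t)$ is $\Z$-equivariant and descends to a flow conjugacy $Y^{\varphi_{C_1}}\cong Y^{\varphi_{C_2}}$, in particular a flow equivalence; combining this with the two flow equivalences $(X_i,\varphi_i)\sim(C_i,\varphi_{C_i})$ and transitivity yields $(X_1,\varphi_1)\sim(X_2,\varphi_2)$.

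For the converse $(\Rightarrow)$, let $q\colon Y^{\varphi_1}\to Y^{\varphi_2}$ be an orientation-preserving orbit equivalence. Transporting the canonical cross-section by $q$ produces, inside the single flow $(Y^{\varphi_2},\Phi)$, two clopen cross-sections: the flat one $\Sigma_{X_2}$, whose return map is $\varphi_2$ and whose flow-gaps are all equal to $1$, and the wavy one $S:=q(\Sigma_{X_1})$, whose return map is conjugate via $q$ to $\varphi_1$ and whose flow-gaps form a locally constant function bounded below by some $\delta>0$ by compactness. The plan is now to match these two sections after inducing. Since inducing $\varphi_1$ on a clopen transversal $C_1\subset X_1$ replaces $S$ by the sub-section $q(\Sigma_{C_1})$, with return map $q$-conjugate to $\varphi_{1,C_1}$, I first choose $C_1$ sparse enough that every flow-gap of $q(\Sigma_{C_1})$ exceeds $1$; then pushing each point of $q(\Sigma_{C_1})$ forward to the first mark of $\Sigma_{X_2}$ is an injective clopen embedding onto a clopen transversal $\Sigma_{C_2}$ with $C_2\subset X_2$. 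This push is itself a flow-reparametrisation, so it intertwines first-return maps, and therefore $\varphi_{1,C_1}$ is conjugate to $\varphi_{2,C_2}$, which is the desired conclusion.

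The main obstacle is precisely this matching (Kakutani--Rokhlin) step: producing a single clopen transversal $C_1$ for which $q(\Sigma_{C_1})$ has all flow-gaps larger than $1$ simultaneously and continuously over $X$. On infinite orbits the gaps can be enlarged at will by taking $C_1$ sparse, but on short periodic orbits the return time is bounded by the period, so one must treat periodic orbits separately (on a single periodic orbit both sections can be reduced to one mark each and aligned directly) and, if needed, first induce $\varphi_2$ as well to balance the two tower structures. Carrying out this construction with clopen sets, using only that all the relevant return times are locally constant on the totally disconnected base, is the combinatorial heart of the Parry--Sullivan theorem; once it is in place, the orientation-preserving bookkeeping above produces the conjugacy.
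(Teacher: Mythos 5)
Note first that the paper does not prove Theorem~\ref{t-PS} at all: it is quoted from Parry and Sullivan \cite{PS}, with \cite{BCE} cited for a detailed modern exposition, so there is no internal proof to compare against and your attempt must stand on its own. Your $(\Leftarrow)$ direction does: the reparametrisation $\Phi^s([x,0])\mapsto\Psi^{s/T_C(x)}([x,0])$ is well defined and continuous precisely because $T_C$ is locally constant on the Stone space, and this tower construction is essentially the same device the paper uses inside its proof of Theorem~\ref{t-isom} to build a flow equivalence from a conjugacy of first return maps. That half is correct.

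The $(\Rightarrow)$ direction, however, is not a proof, and the gap you concede at the end is not a technicality but the entire content of the theorem. Concretely, two steps fail as stated. (a) The claim that one can choose $C_1$ ``sparse enough that every flow-gap of $q(\Sigma_{C_1})$ exceeds $1$'' is false in general: if $\varphi_1$ has a fixed point $x_0$, every clopen transversal $C_1$ contains $x_0$, and since $C_1$ is open and $\varphi_1(x_0)=x_0$, one gets $T_{C_1}\equiv 1$ on a whole clopen neighbourhood of $x_0$ --- so the gaps are forced to be small not only on the periodic orbit itself but on an open set of nearby (possibly aperiodic) orbits. Your proposed remedy, ``treat periodic orbits separately,'' cannot be implemented by a clopen decomposition, because the set of periodic points is in general not clopen and low-period orbits may even be dense (this is exactly the situation for the irreducible subshifts of finite type that Section~\ref{s-T} of the paper relies on). Balancing the two tower structures in the presence of periodic orbits accumulating on aperiodic ones is the genuine Kakutani--Rokhlin-type difficulty that the Parry--Sullivan argument is built to resolve, and your sketch does not engage with it. (b) Even where all gaps exceed $1$, the ``push each point to the first mark of $\Sigma_{X_2}$'' map uses the time-to-section function, which is continuous only as a map to $\R/\Z$: it jumps by $1$ at points of $q(\Sigma_{C_1})$ lying on $\Sigma_{X_2}$, which need not be avoidable without a preliminary perturbation (e.g.\ first flowing $q(\Sigma_{C_1})$ by a continuous time chosen so that the composite $\eta\circ q$ becomes locally constant on the section --- possible on a Stone space, but you never do it). Relatedly, your assertion that the gap function of the wavy section is locally constant is unjustified; it is merely continuous. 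As written, the attempt establishes only the easy implication.
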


See  also \cite{BCE} for a more recent and detailed exposition of this result.

\begin{proof}[Proof of Theorem \ref{t-isom}]
For $i=1, 2,$ let $(X_i, \varphi_i)$ be a Stone system with suspension flow $(Y_i,\Phi_i)$, and assume that each $(X_i, \varphi_i)$ admits a dense orbit.  Assume that the groups $\Tsf(\varphi_i)$ are isomorphic. By Proposition \ref{p-fragmentable}, the groups $\Tsf(\varphi_i)$ are fragmentable. Thus by Theorem \ref{t-BAR} there exists a homeomorphism $q\colon Y_1\to Y_2$ which conjugates the actions. Note that $q$ must send $\Phi_1$-orbits to $\Phi_2$-orbits, as these are the connected components of $Y_1$ and $Y_2$, respectively.  Moreover it must simultaneously preserve or reverse the orientations of all orbits, as this is uniquely determined by whether it does so on any dense orbit of $Y_1$. Thus $(X_1, \varphi_1)$ is flow equivalent to  $(X_2, \varphi_2)$ or to $(X_2, \varphi_2^{-1})$. 

Conversely assume that $(X_1, \varphi_1)$ is flow equivalent to $(X_2, \varphi_2)$ and let us show that $\Tsf(\varphi_1)$ and $\Tsf(\varphi_2)$ are isomorphic (note that it is enough to consider this case,  since  $\Tsf(\varphi_2)=\Tsf(\varphi_2^{-1})$). To this end we need to check that a homeomorphism $q\colon Y_1\to Y_2$ realising the flow equivalence can be chosen so that it conjugates $\Tsf(\varphi_1)$ to $\Tsf(\varphi_2)$. By Theorem \ref{t-PS}, the two systems admit  clopen transversals whose first return maps are topologically conjugate, and we can identify these two clopen transversals with a same subset $C\subset X_1$ and $C\subset X_2$ with common first return map equal to $\psi\colon C\to C$; let $T^{(i)}\colon C\to \N$ be the first return time with respect to $\varphi_i$. Choose a partition $C=\bigsqcup_{j=1}^r C_j$ into clopen subsets, such that in restriction to every $C_j$, the functions $T^{(1)}$ and $T^{(2)}$ are both constant and equal to integers $n_j$ and $m_j$ respectively. We can write $Y_1$ and $Y_2$ as the union of the closures of domains $Y_1=\bigcup_{j=1}^r\overline{U^{(1)}_{C_j\times (0, n_j)}}$ and $Y_2=\bigcup_{j=1}^r\overline{U^{(2)}_{C_j\times (0, m_j)}}$. 
For every $j$, the choice of a homeomorphism $f_j\colon [0, n_j]\to [0,m_j]$ induces a homeomorphism $q_j\colon \overline{U^{(1)}_{C_j\times (0, n_j)}}\to \overline{U^{(2)}_{C_j\times (0, m_j)}}$, and by construction the homeomorphisms $q_j$ are compatible on the boundary and give rise to a homeomorphism $q\colon Y_1\to Y_2$. If we choose the homeomorphisms $f_j$ to be PL dyadic, the resulting homeomorphism $q$ will conjugate $\Tsf(\varphi_1)$ to $\Tsf(\varphi_2)$.
\end{proof}

{\appendix

\section{Topology on $\Hsf_0(\varphi)$}\label{s-polish}
In this appendix we endow the group $\Hsf_0(\varphi)$  with a  topology defined by a complete metric. When $X$ is metrisable, this topology is separable, and thus makes $\Hsf_0(\varphi)$ a Polish group. We then observe that the subgroup $\Tsf(\varphi)$ is dense in it with respect to this topology.  This is analogous to the fact that Thompson's group $T$ is dense in $\Homeo_+(\Sbb^1)$ for the $C^0$ topology \cite[Corollary A5.8]{Bi-St}. As application, we prove  Proposition \ref{p-inf-gen}.

 For any closed interval $I\subset\R$, the group $\Homeo_+(I)$ is a Polish group with respect to the distance $\delta(g, h)=\sup_{x\in I} |f(x)-g(x)|+\sup_{x\in I}|f^{-1}(x)-g^{-1}(x)|$. The topology on $\Hsf_0(\varphi)$ is a natural generalization of this. 

By \cite[Theorem 3.1]{BCE}, for every $g\in \Hsf_0(\varphi)$ there exists a continuous function $\tau\colon Y^\varphi\to \R$ such that for every $y\in Y^\varphi$ we have $g(y)=\Phi^{\tau(y)}(y)$. We will say that $\tau$ is a \emph{displacement function} for $g$. 
 Note that the value of $\tau(y)$ is uniquely determined by $g$ and $y$, provided  $y$ belongs to a non-closed orbit. Thus, by continuity, a displacement function for $g$ is uniquely determined by $g$  provided the union of non-periodic orbits of $(X, \varphi)$ is dense, that is, if $(X, \varphi)$ is topologically free. However, if this assumption is not satisfied, an element $g$ can admit more than one displacement function. If $\tau$ and $\tau'$ are two different displacement functions for $g$, then $\sigma:=\tau-\tau'$ takes integer values, and its value at every point $y\in Y^\varphi$ is a multiple of the period of $y$. Conversely given any continuous function $\sigma\colon Y^\varphi\to \Z$ with this property and a displacement function $\tau$ for $g$, then $\tau':=\tau+\sigma$ is still a displacement function for $g$.
  
 Finally note that if $\tau_1$ is a displacement function for $g_1$ and $\tau_2$ for $g_2$, then a displacement function for $g_3=g_2g_1$ is given by the formula
\begin{equation}\label{e-cocycle}
\tau_3(y)=\tau_2(g_1(y))+\tau_1(y).
\end{equation}
For any two $g$ and  $h\in \Hsf_0(\varphi)$, set
\[d_+(g, h)=\inf_{\tau, \lambda} \sup_{y\in Y^\varphi} |\tau(y)-\lambda(y)|\]
where the infimum is taken over all possible displacement functions $\tau$ and $\lambda$, for $g$ and $h$ respectively. This defines a distance on $\Hsf_0(\varphi)$, which is however not complete.  To ensure completeness,  set $d_-(g, h):=d_+(g^{-1}, h^{-1})$, and 
\[d(g, h)=d_{+}(g, h)+d_-(g, h)\] 
which is again a distance on $\Hsf_0(\varphi)$.
\begin{prop}
The distance $d$ is complete and defines a group topology on $\Hsf_0(\varphi)$. 

\end{prop}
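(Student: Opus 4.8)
The plan is to treat the two assertions separately, reducing both to uniform estimates on displacement functions and using compactness of $Y^\varphi$ throughout. Throughout, write $D_g$ for the set of displacement functions of $g$, and recall from the discussion preceding the statement that two elements of $D_g$ differ by a continuous $\Z$-valued function whose value at each $y$ is a multiple of the $\Phi$-period $p(y)$, and that conversely adding such a function to an element of $D_g$ keeps it in $D_g$.

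\emph{Group topology.} Inversion is immediate: from $d_-(g,h)=d_+(g^{-1},h^{-1})$ one gets $d(g^{-1},h^{-1})=d(g,h)$, so $g\mapsto g^{-1}$ is a $d$-isometry, hence continuous. For multiplication, since $d$ is a metric it is enough to prove sequential continuity, and the key step is the estimate: if $a_n\to a$ and $b_n\to b$ in $d_+$, then $d_+(a_nb_n,ab)\to0$. Granting this, continuity of multiplication follows, because $d(g_nh_n,gh)=d_+(g_nh_n,gh)+d_+(h_n^{-1}g_n^{-1},h^{-1}g^{-1})$ and the hypotheses $g_n\to g$, $h_n\to h$ in $d$ give convergence in $d_+$ of the pair $(g_n,h_n)$ and, via $d_-$, of the pair $(h_n^{-1},g_n^{-1})$. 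To prove the estimate I would pick near-optimal $\tau_n\in D_{a_n}$, $\lambda\in D_a$ and $\rho_n\in D_{b_n}$, $\mu\in D_b$, use the cocycle formula \eqref{e-cocycle} to write $y\mapsto\tau_n(b_n(y))+\rho_n(y)$ and $y\mapsto\lambda(b(y))+\mu(y)$ as displacement functions of $a_nb_n$ and $ab$, and split their difference into $[\tau_n-\lambda](b_n(y))$, $\lambda(b_n(y))-\lambda(b(y))$, and $[\rho_n-\mu](y)$. The first and third are controlled by $\|\tau_n-\lambda\|_\infty$ and $\|\rho_n-\mu\|_\infty$, which tend to $d_+(a_n,a)$ and $d_+(b_n,b)$; for the middle one I note that $b_n(y)=\Phi^{\rho_n(y)}(y)$ and $b(y)=\Phi^{\mu(y)}(y)$ differ by flowing for a time bounded by $\|\rho_n-\mu\|_\infty\to0$, so $b_n\to b$ uniformly, and uniform continuity of $\lambda$ on the compact space $Y^\varphi$ finishes it.

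\emph{Completeness.} Let $(g_n)$ be $d$-Cauchy; after passing to a subsequence I may assume $d_+(g_n,g_{n+1})<2^{-n-1}$ and $d_-(g_n,g_{n+1})<2^{-n-1}$. I would first build a coherent sequence $\tau_n\in D_{g_n}$ with $\|\tau_n-\tau_{n+1}\|_\infty<2^{-n}$: fix $\tau_1\in D_{g_1}$, and given $\tau_n$ choose $\sigma_n\in D_{g_n}$, $\rho_{n+1}\in D_{g_{n+1}}$ with $\|\sigma_n-\rho_{n+1}\|_\infty<2^{-n-1}$, put $\delta:=\tau_n-\sigma_n$ and set $\tau_{n+1}:=\rho_{n+1}+\delta$. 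Since $\delta$ is of the admissible form and the period function depends only on $\Phi$ and not on the element, we have $\tau_{n+1}\in D_{g_{n+1}}$ and $\tau_n-\tau_{n+1}=\sigma_n-\rho_{n+1}$. Thus $(\tau_n)$ converges uniformly to a continuous $\tau$, and I set $g(y):=\Phi^{\tau(y)}(y)$; the same construction on the $d_-$ side yields $\bar\tau_n\in D_{g_n^{-1}}$ converging uniformly to $\bar\tau$, and $\bar g(y):=\Phi^{\bar\tau(y)}(y)$.

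\emph{Main obstacle and conclusion.} The delicate point — arising precisely from the non-uniqueness of displacement functions on periodic orbits — is that $g$ is a priori only a continuous self-map and must be shown to be a homeomorphism. I would do this by proving $g\circ\bar g=\bar g\circ g=\id$, which exhibits $\bar g$ as a two-sided continuous inverse. By \eqref{e-cocycle} the functions $c_n(y):=\tau_n(g_n^{-1}(y))+\bar\tau_n(y)$ are displacement functions of $\id$, hence $\Z$-valued, locally constant, with $c_n(y)\in p(y)\Z$; and from $\tau_n\to\tau$, $\bar\tau_n\to\bar\tau$ uniformly, $g_n^{-1}(y)=\Phi^{\bar\tau_n(y)}(y)\to\bar g(y)$ uniformly, together with uniform continuity of $\tau$, one gets $c_n\to c$ uniformly with $c(y)=\tau(\bar g(y))+\bar\tau(y)$. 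As the $c_n$ are $\Z$-valued and uniformly Cauchy they are eventually equal (a difference of absolute value $<1$ between integers is $0$), so $c=c_n$ for large $n$ and hence $c(y)\in p(y)\Z$; therefore $g(\bar g(y))=\Phi^{c(y)}(y)=y$, and symmetrically $\bar g(g(y))=y$. Thus $g$ is a homeomorphism with continuous displacement function $\tau$, so $g\in\Hsf_0(\varphi)$ by \cite[Theorem 3.1]{BCE}. Finally $d_+(g_n,g)\le\|\tau_n-\tau\|_\infty\to0$ and $d_-(g_n,g)\le\|\bar\tau_n-\bar\tau\|_\infty\to0$ give $g_n\to g$; since a Cauchy sequence with a convergent subsequence converges, this establishes completeness.
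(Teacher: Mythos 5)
Your proof is correct and follows essentially the same route as the paper: inversion is a $d$-isometry, continuity of multiplication comes from the cocycle formula \eqref{e-cocycle} and the same three-term splitting with uniform continuity of a fixed displacement function, and completeness comes from extracting uniformly convergent displacement functions for $g_n$ and $g_n^{-1}$ and checking the two limits are mutually inverse. The only (harmless) deviations are that you make explicit the correction trick needed to align the displacement functions of a $d_+$-convergent sequence with a single displacement function of the limit, and that you conclude $g\circ\bar g=\id$ via integrality of displacement functions of the identity, where the paper simply passes to the limit in $\Phi^{c_n(y)}(y)=y$ using continuity of the flow.
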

\begin{proof} 
Let $(g_n)_n$ be a Cauchy sequence. It follows that we can find a sequence of displacement functions $(\tau_n)_n$ for $(g_n)_n$ and $(\lambda_n)_n$ for $(g_n^{-1})_n$ that are Cauchy with respect to the distance of uniform convergence. Therefore, $(\tau_n)_n$ and $(\lambda_n)_n$ converge uniformly to continuous functions $\tau, \lambda \colon Y^\varphi\to \R$ respectively, and we can define continuous maps $h, k\colon Y^\varphi\to Y^\varphi$ by $h(y)=\Phi^{\tau(y)}(y)$ and $k(y)=\Phi^{\lambda(y)}(y)$ respectively. Passing to the limit in the equality
\[y=g_ng_{n}^{-1}(y)=\Phi^{\tau_{n}(\Phi^{\lambda_n(y)}(y))+\lambda_n(y)}(y)\]
 we see that $hk(y)=y$, and similarly $kh(y)=y$. Thus $h$ and $k$ are homeomorphisms inverse to each other, and $g_n$ tends to $h\in \Hsf_0(\varphi)$ as $n\to \infty$. 

Let us now check that $d$ defines a group topology. It is clear that the map $g\mapsto g^{-1}$ is continuous. Let $(g_n)_n$ and $(h_n)_n$ be sequences converging to $g$ and $h$ respectively, and let us check that $(g_nh_n)_n$ tends to $gh$ as $n\to \infty$. Let us show that $d_+(g_nh_n, gh)\to 0$. Fix displacement functions $\tau$ for $g$, and $\lambda$ for $h$, as well as two sequences $(\tau_n)_n$ and $(\lambda_n)_n$ of displacement functions for $(g_n)_n$ and $(h_n)_n$ that converge to $\tau$ and $\lambda$ uniformly. Set $\delta(y)=\tau(h(y))+\lambda(y)$ and $\delta_n(y)=\tau_n(h_n(y))+\lambda_n(y)$, these define displacement functions for $gh$ and for $g_nh_n$ respectively (see \eqref{e-cocycle}). We have
\[|\delta_n(y)-\delta(y)|\le |\tau_n(h_n(y))-\tau(h_n(y))|+|\tau(h_n(y))-\tau(h(y))|+|\lambda_{n}(y)-\lambda(y)|.\]
The first and third summand tend to zero uniformly on $y$ because  $\tau_n\to \tau$ and $\lambda_n\to \lambda$ uniformly. Moreover the second term also tends to zero uniformly by uniform continuity of $\tau$, and because  $h_n(y)=\Phi^{{\lambda_n(y)}}(y)$ tends to $h(y)$ uniformly. Thus $\sup_y |\delta_n(y)-\delta(y)|$ tends to zero, so that $d_+(g_nh_n, gh)\to 0$.  Similarly one proves that $d_-(g_nh_n, gh)\to 0$, showing that $d(g_nh_n, gh)\to 0$.
\end{proof}

\begin{prop}
The group $\Tsf(\varphi)$ is a dense subgroup of $\Hsf_0(\varphi)$. In particular if $X$ is metrisable, the group $\Hsf_0(\varphi)$ is Polish. 
\end{prop}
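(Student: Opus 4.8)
The plan is to deduce the ``Polish'' clause from density and then to prove density by a fibrewise dyadic approximation, modelled on the classical fact that Thompson's group $T$ is dense in $\Homeo_+(\Sbb^1)$. For the reduction: we have already shown that $d$ is a complete distance defining a group topology, so $\Hsf_0(\varphi)$ is completely metrizable; if $X$ is metrisable then $\Tsf(\varphi)$ is countable (see the remark following Definition~\ref{d-group}), so a dense copy of $\Tsf(\varphi)$ makes $\Hsf_0(\varphi)$ separable, hence Polish. For density, fix $g\in\Hsf_0(\varphi)$ with a displacement function $\tau\colon Y^\varphi\to\R$ and $\eps>0$; I want $k\in\Tsf(\varphi)$ with $d(g,k)<\eps$. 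Since $Y^\varphi$ is compact Hausdorff it carries a unique compatible uniformity, with respect to which $g$, $g^{-1}$, the flow $\Phi$ and $\tau$ are all uniformly continuous; this substitutes for metric uniform continuity in the possibly non-metrisable case. For each $x\in X$ identify the $\Phi$-orbit of $[x,0]$ with $\R$ via $t\mapsto[x,t]$ and set $a_x(t)=t+\tau([x,t])$, an orientation-preserving homeomorphism of $\R$ describing $g$ along the orbit; the suspension identification $[x,t+1]=[\varphi(x),t]$ yields the equivariance $a_x(t+1)=a_{\varphi(x)}(t)+1$.

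The idea is to produce, locally constantly in $x$, dyadic PL homeomorphisms $\beta_x$ of $\R$ satisfying the same equivariance $\beta_x(t+1)=\beta_{\varphi(x)}(t)+1$ and with $\sup_t|\beta_x(t)-a_x(t)|<\eps$, and to let $k$ be the element acting as $\beta_x$ along each orbit. Such a $k$ lies in $\Tsf(\varphi)$ exactly because each $\beta_x$ is dyadic PL and depends only on a clopen datum of $x$, so that in suitable charts $k$ takes the form $(x,t)\mapsto(x,f(t))$ with $f$ dyadic PL; moreover its displacement function $\sigma$ then satisfies $\sup_y|\sigma(y)-\tau(y)|<\eps$, whence $d_+(g,k)<\eps$. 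To build the $\beta_x$, choose $n$ large and use the dyadic grid $\{j/2^n\}$ in the flow direction. By uniform continuity of $\tau$, for $n$ large the image gaps $a_x((j+1)/2^n)-a_x(j/2^n)$ are all $<\eps/2$, and $a_x$ varies by $<\eps/2$ as $x$ ranges over a sufficiently fine clopen cell. By the equivariance it suffices to prescribe $\beta_x$ on the levels $j=0,\dots,2^n-1$ and extend. On each cell of a common clopen refinement one replaces the values $a_x(j/2^n)$ by dyadic numbers $b_j$, chosen within $\eps/2$ and strictly increasing in $j$, and lets $\beta_x$ send each $[j/2^n,(j+1)/2^n]$ onto $[b_j,b_{j+1}]$ by a dyadic PL homeomorphism (such a map between two dyadic intervals always exists, as in Thompson's group $F$). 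The bound $\sup_t|\beta_x-a_x|<\eps$ then follows from the two smallness estimates.

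It remains to control $d_-(g,k)$. Closeness of the displacement functions forces $k$ to be uniformly close to $g$ as a self-map of $Y^\varphi$ (apply the uniformly continuous flow to uniformly close times); uniform continuity of $g^{-1}$ then gives that $k^{-1}$ is uniformly close to $g^{-1}$, and a further use of the uniform continuity of $\sigma$ shows that the displacement functions $y\mapsto-\tau(g^{-1}(y))$ and $y\mapsto-\sigma(k^{-1}(y))$ of $g^{-1}$ and $k^{-1}$ are uniformly within $\eps$, after possibly enlarging $n$. Hence $d(g,k)=d_+(g,k)+d_-(g,k)$ can be made $<\eps$, proving density.

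The main obstacle is the simultaneous bookkeeping hidden in the choice of the $b_j$: they must be dyadic, \emph{strictly} increasing in $j$ (so that each $\beta_x$ is a genuine homeomorphism), uniformly within $\eps/2$ of the $a_x(j/2^n)$, locally constant in $x$ with clopen level sets, and compatible with the wrap-around relation linking the passage from level $2^n-1$ to level $2^n$ at $x$ with level $0$ at $\varphi(x)$. Strict monotonicity under rounding is what dictates that the grid be fine enough that consecutive image points remain separated once the order of the finite increasing sequence $\bigl(a_x(j/2^n)\bigr)_j$ is matched by an order-preserving dyadic choice on each cell; arranging this choice to be simultaneously clopen-locally-constant and equivariant is the one delicate point, handled by passing to a clopen partition of $X$ compatible with $\varphi$ on the finitely many levels involved.
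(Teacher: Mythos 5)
Your proof is correct in substance but takes a genuinely different route from the paper's. The paper reduces density to the classical fact that Thompson's group $F$ is dense in $\Homeo_+([0,1])$ via a three-step scheme: it first approximates elements supported in a single chart domain $U_{C\times I}$ (fibrewise, after passing to a clopen partition of $C$ on which the fibre maps are almost constant), then handles any $g$ with $d(g,\id)<1/8$ by factoring it as a product $g=g_2g_1$ of two such chart-supported pieces, and finally treats general $g$ using that $\Hsf_0(\varphi)$ is connected and hence generated by any identity neighbourhood. You instead build, in one global step, an equivariant, clopen-locally-constant family of dyadic PL maps $\beta_x$ approximating the orbitwise maps $a_x$; this avoids the connectedness and factorisation steps and produces an explicit approximant, at the price of confronting head-on the wrap-around condition $\beta_x(t+1)=\beta_{\varphi(x)}(t)+1$. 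Two points in your bookkeeping need sharpening. First, your phrase that the grid should be ``fine enough that consecutive image points remain separated'' is backwards: refining the grid brings the points $a_x(j/2^n)$ closer together. What actually rescues strict monotonicity (and the wrap-around consistency $b_{2^n-1}^{(x)}<b_0^{(\varphi(x))}+1$) is that, for the fixed $n$ you chose, the quantity $\min_{j,x}\bigl(a_x((j+1)/2^n)-a_x(j/2^n)\bigr)$ is a positive continuous function minimised over the compact space $X$, hence bounded below by some $\delta>0$; you must take the rounding tolerance smaller than $\min(\eps/2,\delta/2)$ and perform the rounding on a clopen partition refining its own image under $\varphi^{-1}$ so that the level-$0$ choice at $\varphi(x)$ determines the level-$2^n$ choice at $x$. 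Second, in the $d_-$ estimate you should invoke uniform continuity of $\tau$, not of $\sigma$: the modulus of continuity of the approximant $\sigma$ degenerates as the approximation is refined, whereas the bound $|\tau(g^{-1}y)-\sigma(k^{-1}y)|\le|\tau(g^{-1}y)-\tau(k^{-1}y)|+\sup_z|\tau(z)-\sigma(z)|$ closes the argument using only the uniform continuity of $\tau$, of $g^{-1}$ and of the flow. With these repairs your argument is complete and yields the same conclusion, including the Polish clause, which you deduce exactly as the paper does.
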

\begin{proof}
The proof proceeds by  reducing to the classical remark that Thompson's group $F$ is dense in $\Homeo_+([0,1])$ \cite[Corollary A5.8]{Bi-St}.

Let us  show that  every $g\in \Hsf_0(\varphi)$ is in the closure of $\Tsf(\varphi)$. Assume first that $g$ is supported in a domain $U_{C\times I}$. Fix $\varepsilon>0$.  For every $x\in C$ the element $g$ fixes the fiber $\pi_{C,I}(\{x\}\times I\})$ and acts on it by a homeomorphism  $f_x\in \Homeo_+(I)$ which varies continuously with respect to $x$.  Thus we can find a clopen partition $C=\bigsqcup_{i=1}^r C_i$ such that whenever $x$ and $y\in C_i$, we have that $\delta(f_{x}, f_{y})<\varepsilon/2$, where $\delta$ is the distance in $\Homeo_+(I)$. For $i=1,\ldots, r$, choose $x_i\in C_i$ and an element $h_i\in F_I$ such that $\delta(h_i, f_{x_i})<\varepsilon/2$. If we identify $h_i$ with the corresponding element of $F_{C_i , I}$ and set $h=h_1\cdots h_r\in \Tsf(\varphi)$, we have $d(g, h)<\varepsilon$. Since $\varepsilon$ is arbitrary, we deduce that $g$ is in the closure of $\Tsf(\varphi)$. 

Next, assume that $d(g, \id)<1/8$. We consider the domains $U_{X\times (-1/4, 1/4)}\subset U_{X\times (-1/2, 1/2)}$. Then $g(U_{X\times (-1/4, 1/4)})\subset U_{X\times (-1/2, 1/2)}$. Find $g_1\in \Hsf_0(\varphi)$ supported in $U_{ X\times (-1/2, 1/2)}$ which coincides with $g$ in restriction to $U_{X\times (-1/4, 1/4)}$. Then $g_2:=g g_1^{-1}$ is supported in $U_{X\times (1/4, 3/4)}$ and therefore $g=g_2g_1$ is in the closure of $\Tsf(\varphi)$ by the previous case. 

Finally since $\Hsf_0(\varphi)$ is connected, it is generated by any neighbourhood of the identity. Therefore every element $g\in \Hsf_0(\varphi)$ is in the closure of $\Tsf(\varphi)$, that is, $\Tsf(\varphi)$ is dense. 

If $X$ is metrisable, it has only countably many clopen subsets.  Moreover there are  countably many dyadic intervals in $\R$, and countably many dyadic PL homeomorphisms between them. It follows from Definition \ref{d-group} (using compactness) that in this case the group $\Tsf(\varphi)$ is countable, and therefore $\Hsf_0(\varphi)$ is separable.
\end{proof}

We now prove Proposition~\ref{p-inf-gen} for an arbitrary Stone system $(X, \varphi)$ (see also Remark \ref{r-p-inf-gen-minimal} for the case where $(X, \varphi)$ is minimal). 
\begin{proof}[Proof of Proposition~\ref{p-inf-gen}] 
Let $K\le \Tsf(\varphi)$ be the subgroup generated by the groups $F'_{C, J}$. Fix $g\in \Tsf(\varphi)$  and let us show that $g\in K$. 
 Assume first that $d(g, \id)<1/4$. Let $X_0\subset Y^\varphi$ be the projection of $X\times\{0\}$ and cover it by disjoint domains  $U_{C_1\times I_1}, \ldots, U_{C_r\times I_r}$ with $0\in I_i$ so that in restriction to each of them, $g$ is given by a dyadic PL  homeomorphism $f_i\colon I_i\to f_i(I_i)$.  Since $-1/4<f_i(0)<1/4$, we can choose the intervals $I_i$ small enough so that  each closure $\overline{I_i\cup f_i(I_i)}$ is contained in $(-1/4, 1/4)$.  Choose  $k_i\in F_{(-1/4, 1/4)}$ that extends $f_i\vert_{I_i}$. Let $h_i\in F_{C_i, J_i}$ correspond to $k_i$, and $h=h_1\cdots h_r$.  Replace $g$ by $h^{-1}g$. After doing this, we obtain that $g$  fixes pointwise a domain of the form $U_{X \times I}$ for some $I\ni 0$. Equivalently, $g$ is supported in a domain of the form $U_{X\times J}$ for some $J\subset (0, 1)$. Now the conclusion follows from Lemma \ref{l-compactness-prod}.
 
 Assume now that $g$ is arbitrary. Since $\Hsf_0(\varphi)$ is connected and hence generated by any neighbourhood of the identity,  we can write $g=g_1\cdots g_r$ for some $g_i\in \Hsf_0(\varphi)$ with $d(g_i, \id)<1/4$. By density of $\Tsf(\varphi)$ we can choose $h_i\in \Tsf(\varphi)$  close enough to $g_i$ so that $d(h_i, \id)<1/4$ and the element $h=h_1\cdots h_r$ satisfies $d(gh^{-1}, \id)<1/4$. It follows from the previous case that $h$ and $gh^{-1}$ both belong to $K$ and thus $g\in K$.
 \end{proof}
}
{\acknowledgements

We thank Bertrand Deroin, who extensively advertised  the idea   to study  groups of self flow equivalences of  almost-periodic flows in the context of left-orderable groups. 
NMB thanks Kostya Medynets and Volodia Nekrashevych for discussions on presentations on topological full groups. The idea of proof of Theorem \ref{t-fp} is  inspired by  these discussions.  We are also grateful to Slava Grigorchuk, Fran\c cois Le Maître, Victor Kleptsyn and Samuel Petite for the many interesting comments on the first version of this paper. We also thank the referee for the careful reading and relevant suggestions.

This work has been written during a visit of MT to ETHZ. He thanks the institute for the welcoming atmosphere. He also thanks INRIA Lille for the semester of d\'el\'egation during the academic year 2018/19.}

\begin{bibdiv}
\begin{biblist}

\bib{APP}{article}{
	author={Aliste-Prieto, J.},
	author={Petite, S.},
	title={On the simplicity of homeomorphism groups of a tilable lamination},
	journal={Monatsh. Math.},
	volume={181},
	date={2016},
	number={2},
	pages={285--300},
	issn={0026-9255},
}

\bib{Banyaga}{book}{
	author={Banyaga, A.},
	title={The structure of classical diffeomorphism groups},
	series={Mathematics and its Applications},
	volume={400},
	publisher={Kluwer Academic Publishers Group, Dordrecht},
	date={1997},
	pages={xii+197},
}
\bib{BHV}{book}{
	AUTHOR = {Bekka, B.},
	author={de la Harpe, P.},
	author={Valette, A.},
     TITLE = {Kazhdan's property ({T})},
    SERIES = {New Mathematical Monographs},
    VOLUME = {11},
 PUBLISHER = {Cambridge University Press, Cambridge},
      YEAR = {2008},
     PAGES = {xiv+472},
}

\bib{BAR}{article} {  
  AUTHOR = {Ben Ami, E.},
  author = {Rubin, M.},
     TITLE = {On the reconstruction problem for factorizable homeomorphism
              groups and foliated manifolds},
   JOURNAL = {Topology Appl.},
    VOLUME = {157},
      YEAR = {2010},
    NUMBER = {9},
     PAGES = {1664--1679},
}

\bib{Bergman}{article}{
	author={Bergman, G. M.},
	title={Right orderable groups that are not locally indicable},
	journal={Pacific J. Math.},
	volume={147},
	date={1991},
	number={2},
	pages={243--248},
}

\bib{Bi-St}{book}{
   author={Bieri, R.},
   author={Strebel, R.},
   title={On groups of PL-homeomorphisms of the real line},
   series={Mathematical Surveys and Monographs},
   volume={215},
   publisher={American Mathematical Society, Providence, RI},
   date={2016},
   pages={xvii+174},
}

\bib{Boyle}{article}{
	author={Boyle, M.},
	title={Flow equivalence of shifts of finite type via positive
		factorizations},
	journal={Pacific J. Math.},
	volume={204},
	date={2002},
	number={2},
	pages={273--317},
}

\bib{BCE}{article}{
	author={Boyle, M.},
	author={Carlsen, T. M.},
	author={Eilers, S.},
	title={Flow equivalence and isotopy for subshifts},
	journal={Dyn. Syst.},
	volume={32},
	date={2017},
	number={3},
	pages={305--325},
}

\bib{BC}{article}{
	author = {Boyle, M.},
	author = {Chuysurichay, S.},
	title = {The mapping class group of a shift of finite type},
	journal={J. Mod. Dyn.},
	volume={13},
	date={2018},
	pages={115--145},
}

\bib{Bri}{article}{
    AUTHOR = {Brin, M. G.},
     TITLE = {The chameleon groups of {R}ichard {J}. {T}hompson:
              automorphisms and dynamics},
   JOURNAL = {Inst. Hautes \'{E}tudes Sci. Publ. Math.},
    NUMBER = {84},
      YEAR = {1996},
     PAGES = {5--33 (1997)},
}
\bib{Br-Sq}{article}{
	author={Brin, M. G.},
	author={Squier, C. C.},
	title={Groups of piecewise linear homeomorphisms of the real line},
	journal={Invent. Math.},
	volume={79},
	date={1985},
	number={3},
	pages={485--498},
}

\bib{forcing}{article}{
	author={Calegari, D.},
	title= {Dynamical forcing of circular groups},
	journal={Trans. Amer. Math. Soc.},
	volume={358},
	year={2006},
	number={8},
	pages={3473\ndash 3491}
}

\bib{CFP}{article}{
   author={Cannon, J. W.},
   author={Floyd, W. J.},
   author={Parry, W. R.},
   title={Introductory notes on Richard Thompson's groups},
   journal={Enseign. Math. (2)},
   volume={42},
   date={1996},
   number={3-4},
   pages={215--256},
}

\bib{CC}{book}{
	author={Ceccherini-Silberstein, T.},
	author={Coornaert, M.},
	title={Cellular automata and groups},
	series={Springer Monographs in Mathematics},
	publisher={Springer-Verlag, Berlin},
	date={2010},
	pages={xx+439},
}

	\bib{Cor-partial}{article}{
	author = {Cornulier, Y.},
	title = {Commensurating actions for groups of piecewise continuous transformations},
	eprint = {arXiv:1803.08572},
}	

\bib{Deroin}{article}{
   author={Deroin, B.},
   title={Almost-periodic actions on the real line},
   journal={Enseign. Math. (2)},
   volume={59},
   date={2013},
   number={1-2},
   pages={183--194},
   issn={0013-8584},
}

\bib{GOD}{article}{
      author={Deroin, B.},
      author={Navas, A.},
      author={Rivas, C.},
       title={Groups, orders and dynamics},
   eprint={arXiv:1408.5805},
}

\bib{Epstein}{article}{
   author={Epstein, D. B. A.},
   title={The simplicity of certain groups of homeomorphisms},
   journal={Compositio Math.},
   volume={22},
   date={1970},
   pages={165--173},
   number = {2},
}

\bib{Ghy-ens}{article}{
   author={Ghys, \'{E}.},
   title={Groups acting on the circle},
   journal={Enseign. Math. (2)},
   volume={47},
   date={2001},
   number={3-4},
   pages={329--407},
}

\bib{Grig}{article}{
	author = {Grigorchuk, R. I.},
	title = {Degrees of growth of finitely generated groups, and the theory of invariant means},
	journal = {Izv. Akad. Nauk SSSR Ser. Mat.},
	year = {1984},
	volume = {48},
	number = {5},
	pages={939--985},
	language={Russian},
}

\bib{GM}{article}{
	author = {Grigorchuk, R. I.},
	author = {Medynets, K. S.},
	title={On the algebraic properties of topological full groups},
	language={Russian, with Russian summary},
	journal={Mat. Sb.},
	volume={205},
	date={2014},
	number={6},
	pages={87--108},
	translation={
		journal={Sb. Math.},
		volume={205},
		date={2014},
		number={5-6},
		pages={843--861},
	},
}

\bib{RWRG}{article}{
	AUTHOR = {Gromov, M.},
	TITLE = {Random walk in random groups},
	JOURNAL = {Geom. Funct. Anal.},
	VOLUME = {13},
	YEAR = {2003},
	NUMBER = {1},
	PAGES = {73--146},
}

\bib{Higman}{article}{
	author={Higman, G.},
	title={On infinite simple permutation groups},
	journal={Publ. Math. Debrecen},
	volume={3},
	date={1954},
	pages={221--226 (1955)},
}

\bib{HL}{article}{
   author = {Hyde, J.},
   author ={Lodha, Y.},
    title = {Finitely generated infinite simple groups of homeomorphisms of the real line},
    volume = {218},
    journal = {Invent. Math.},
    pages = {83--112},
    number = {1},
}

\bib{HLNR}{article}{
	author = {Hyde, J.},
	author ={Lodha, Y.},
	author = {Navas, A.},
	author = {Rivas, C.},
	title = {Uniformly perfect finitely generated simple left orderable groups},
	journal = {Ergod. Th. \& Dynam. Sys.},
	pages = {1--19},
	 DOI={10.1017/etds.2019.59},
}

\bib{JM}{article}{
	author={Juschenko, K.},
	author={Monod, N.},
	title={Cantor systems, piecewise translations and simple amenable groups},
	journal={Ann. of Math. (2)},
	volume={178},
	date={2013},
	number={2},
	pages={775--787},
}

\bib{KKM}{book}{,
	Author = {Kim, S.-h.},
	author = {Koberda, T.},
	author = {Mj, M.},
	Title = {Flexibility of group actions on the circle},
	series = {Lect. Notes Math.},
	Volume = {2231},
	Pages = {x + 134},
	Year = {2018},
	Publisher = {Cham: Springer},
}

\bib{LB-MB-subdyn}{article}{
	author={Le Boudec, A.},
	author={Matte Bon, N.},
	title={Subgroup dynamics and $C^*$-simplicity of groups of
		homeomorphisms},
	journal={Ann. Sci. \'{E}c. Norm. Sup\'{e}r. (4)},
	volume={51},
	date={2018},
	number={3},
	pages={557--602},
}

\bib{LMT}{article}{
   author = {Lodha, Y.},
   author={Matte Bon, N.},
   author={Triestino, M.},
    title = {Property {FW}, differentiable structures, and smoothability of singular actions},
	eprint={arXiv:1803.08567},
	note = {To appear in J. Topol.}
}

\bib{Marg}{article}{
	author={Margulis, G.},
	title={Free subgroups of the homeomorphism group of the circle},
	journal={C. R. Acad. Sci. Paris S\'{e}r. I Math.},
	volume={331},
	date={2000},
	number={9},
	pages={669--674},
}

\bib{Mat-simple}{article}{
	author={Matui, H.},
	title={Some remarks on topological full groups of Cantor minimal systems},
	journal={Internat. J. Math.},
	volume={17},
	date={2006},
	number={2},
	pages={231--251},
}

\bib{Nav-T}{article} {
    AUTHOR = {Navas, A.},
     TITLE = {Actions de groupes de {K}azhdan sur le cercle},
   JOURNAL = {Ann. Sci. \'{E}cole Norm. Sup. (4)},
    VOLUME = {35},
      YEAR = {2002},
    NUMBER = {5},
     PAGES = {749--758},
}

\bib{Nav}{article}{
	author = {Navas, A.},
	title = {Group actions on 1-manifolds: a list of very concrete open questions},
	journal = {Proceedings of the
	International Congress of Mathematicians, Vol. 2. Eds. B. Sirakov, P. Ney de Souza and M. Viana. World Scientific, Singapore},
	year = {2018},
	pages ={2029--2056},
}

\bib{Nav-book}{book}{
	author={Navas, A.},
	title={Groups of circle diffeomorphisms},
	Publisher = {University of Chicago Press, Chicago, IL},
	Year = {2011},
	Series = {Chicago Lectures in Mathematics},
	Pages = {xviii+290},
}

\bib{Nek-fp}{article}{
   author={Nekrashevych, V.},
   title={Finitely presented groups associated with expanding maps},
   conference={
      title={Geometric and cohomological group theory},
   },
   book={
      series={London Math. Soc. Lecture Note Ser.},
      volume={444},
      publisher={Cambridge Univ. Press, Cambridge},
   },
   date={2018},
   pages={115--171},
}

\bib{PS}{article}{
	author={Parry, B.},
	author={Sullivan, D.},
	title={A topological invariant of flows on $1$-dimensional spaces},
	journal={Topology},
	volume={14},
	date={1975},
	number={4},
	pages={297--299},
}

\bib{SY}{article}{
	author = {Schmieding, S.},
	author = {Yang, K.},
	title = {The mapping class group of a minimal subshift},
	eprint = {arXiv:1810.08847},
	note = {To appear in Colloq. Math.}
}		

\bib{SU}{article}{
	author ={Schreier, J.},
	author ={Ulam, S.},
	title = {Eine Bemerkung \"uber die Gruppe der topologisehen Abbildungen der Kreislinie auf sich selbst},
	journal = {Stud. Math.},
	volume = {5},
	pages ={155--159},
	year ={1934},
}

\bib{Shalom}{article}{
   author={Shalom, Y.},
   title={Rigidity of commensurators and irreducible lattices},
   journal={Invent. Math.},
   volume={141},
   date={2000},
   number={1},
   pages={1--54},
}

\bib{Thurston}{article}{
	author={Thurston, W. P.},
	title={A generalization of the Reeb stability theorem},
	journal={Topology},
	volume={13},
	date={1974},
	pages={347--352},
	number = {4},
}

\bib{WM}{article}{
	author={Witte Morris, D.},
	title = {Amenable groups that act on the line},
	journal = {Algebr. Geom. Topol.},
	volume ={6},
	pages = {2509--2518},
	year ={2006},
}

\end{biblist}
\end{bibdiv}

\end{document}